\newtheorem{theorem}{Theorem}[section]
\newtheorem{lemma}[theorem]{Lemma}
\newtheorem{proposition}[theorem]{Proposition}
\newtheorem{definition}[theorem]{Definition}
\newtheorem{assumption}{Assumption}
\newtheorem{remark}{Remark}
\def\Vect{\operatorname{span}}
\def\dist{\operatorname{dist}}
\def\N{{\mathbf N}}
\def\bl{{\mathcal I}}
\def\I{\bl}
\def\esp#1{\mathbf E\left[#1\right]}
\def\P{\mathbf P}
\def\D{\mathbb D}
\def\F{\mathcal{F}}
\def\R{\mathbf{R}}
\def\car{\mathbf{1}}
\def\maN{\Pois}
\def\dz{\dif z}
\def\ds{\dif s}
\def\KR{\mathfrak K}
\def\Lip{\operatorname{Lip}}
\def\JJ{\mathfrak J}
\def\cont{\mathcal C}
\def\bl{{\mathcal I}}
\def\J{\mathcal J}
\def\emb{\mathfrak e}
\newcommand\diveB{\delta}
\def\H{\mathcal H}
\def\FrF{\mathfrak F}
\def\lec#1{\lesssim_{#1}}
\def\D{\operatorname{d}\!}
\def\dif{\,\D}
\def\Sko{\operatorname{Sko}}
\def\<{\left \langle}
  \def\>{\right\rangle}
\newcommand{\Poist}{\widetilde{\maN}}
\newcommand{\Pois}{\mathcal{N}} 
\def\bX{{\overline{X}_n}}
\def\cyl(X){{\mathbb{X}}}
\begin{document}

\title{Diffusive limits of Lipschitz functionals of Poisson measures}
\author{E. Besan\c con}
\address{LTCI, Telecom Paris, I.P. Paris, France} \email{eustache.besancon@mines-telecom.fr}
\author{L. Coutin}
\address{Institut Math\'ematique de Toulouse, Universit\'e
  P. Sabatier, Toulouse, France}
\email{coutin@math.univ-toulouse.fr}
\author{L. Decreusefond}
\address{LTCI, Telecom Paris, I.P. Paris, France} \email{laurent.decreusefond@mines-telecom.fr}
\author{P. Moyal}
\address{Université de Lorraine, France} \email{pascal.moyal@univ-lorraine.fr}

\keywords{Approximation diffusion, Hawkes processes, CTMC, Stein's method} \date{}

\begin{abstract}
Continuous Time Markov Chains, Hawkes processes and many other interesting
processes can be described as solution of stochastic differential equations driven
by  Poisson measures. Previous works, using the Stein's method, give the
convergence rate of a sequence of renormalized Poisson measures towards the
Brownian motion in several distances, constructed on the model of the
Kantorovitch-Rubinstein (or Wasserstein-1) distance. We show that many
operations (like time change, convolution) on continuous functions are Lipschitz
continuous to extend these quantified convergences to diffuse limits of Markov
processes and long-time behavior of Hawkes processes.
\end{abstract}
\maketitle{}


\section{Introduction}
Limit theorems for Continuous-Time Markov Chains (CTMC's) have proven to be useful tools 
to approximate the dynamics of the processes under consideration. 
Fluid approximations can be used to determine ergodicity conditions, a first order approximation of the mean dynamics of the process, or to analyze the dynamics of discrete-event systems 
around saturation. Likewise, diffusion approximations (also called, along the various communities, Functional Central Limit Theorems or invariance principles) usually lead to the weak approximation of the (properly scaled) difference between the original CTMC and its fluid limit, to a diffusion process. Such convergence results allow to assess the speed of convergence to 
the fluid limit, and thereby, to gain insights
on the behavior of the considered process when the state space is large, and/or to more easily simulate its paths whenever the dynamics of the original discrete-event CTMC is too intricate. 

The literature regarding weak (fluid or diffusion) approximations of CTMC's is
vast, and so is the range of their fields of applications: for instance, queueing networks
(see e.g. \cite{borovkov1967limit,robert_stochastic_2003,Harrison1987,Kruk2011} 
and references therein), biology and epidemics (e.g. \cite{britton,Etheridge2011,decreusefond2012} and references therein), physics \cite{Chaikin1995}, and so on. From the mathematical standpoint, we can enumerate 
at least three approaches to prove the corresponding convergence theorems. 
The most classical one relies on the so-called Dynkin's Lemma 
(see e.g. Chapter 7 in \cite{ethier86}, or Chapter 7 in \cite{decreusefond2012stochastic}), 
and more 
generally on the semi-martingale decomposition of the considered CTMC, together with the usual weak convergence theorems 
for martingales, to derive the limiting process of the
properly scaled CTMC. 
An alternative way consists in
representing the Markov process as the sum of time-changed Poisson processes and
then to use the well-known limit theorems for such processes, 
see e.g. Chapter 6 in \cite{ethier86}. 
A third, alternative way is to represent the CTMC as the
solution of a stochastic differential equation (SDE) driven by some independent
Poisson random measures (see e.g. \cite{britton}). We
mention these different approaches because not all of them behave nicely 
for what has to be done here.

After fluid and diffusion approximations, the third and next natural step is to evaluate 
the rate of convergence in the diffusion approximation. This is the main 
object of the present work. 
We present hereafter a unified framework, based on the third aforementioned approach, namely, 
on the representation of CTMC's as solution of SDE's driven by Poisson random measures, to 
derive bounds for the convergence in the diffusion approximations of a wide class of CTMC's, under  
various mild conditions on the integrand of these Poisson integrals. By considering a wide range of cases study, from queueing systems to biological models and epidemiological processes, we  show that these assumptions are met by many processes that are prevalent in practice. In many cases, we retrieve existing results concerning the diffusion approximations of the considered processes, and then go one step further, by establishing bounds for the latter convergence. 
We also show that the same procedure can be applied to study the long-run behavior of Hawkes processes. 

For a sequence of processes 
$(X_{n},\,n\ge 1)$ with values in a complete, separable, metric space which converges to a
process $X$, estimating the rate of convergence amounts to computing $\varphi$ such
that
\begin{equation}\label{eq_stability_core4:1}
  \dist_{\mathcal{C}}(\P_{X_{n}},\, \P_{X}):=  \sup_{f\in \mathcal{C}}\esp{f(X_{n})}-\esp{f(X)}\le \varphi(n),
\end{equation}
where $\mathcal{C}$ is the set of test functions. The minimum regularity required 
for the
left-hand-side of \eqref{eq_stability_core4:1} to define a distance is to
suppose $f$ Lipschitz continuous, but not necessarily bounded.

If we take for $\mathcal{C}$ the set of Lipschitz bounded functions, we obtain a
distance which generates the same topology as that of the Prokhorov distance. In
the seventies, many papers (see e.g. \cite{Haeusler1984,Sawyer1972},
and references therein) derived the rates of convergence of functional CLT's such as 
Donsker's theorem, for this metric. They generally obtained
$\varphi(n)=O(n^{-1/4})$ via the Skorohod representation theorem and ad-hoc subtle
computations on the sample-paths themselves.

In the nineties, in his pioneering paper \cite{BarbourSteinmethoddiffusion1990},
Barbour constructed a Malliavin-like apparatus to estimate the rate of
convergence in the Donsker theorem on the Skorohod space $\mathbb D$. The set
$\mathcal{C}$ under consideration is the set of three times Fréchet
differentiable functions on $\mathbb D$ with additional boundedness properties.
Once this functional framework is setup, we can proceed similarly to the 
Stein method in finite dimension (see \cite{kasprzak_diffusion_2017} for a new
application of this approach to the Moran model). 
Let us also mention several recent applications of the Stein method in finite dimension, 
assessing the rate of convergence of the
stationary distributions of various queueing processes: Erlang-A and
Erlang-C systems in \cite{Braverman2016}; a system with reneging and phase-type
service time distributions in \cite{Braverman2017}, and single-server queues in heavy-traffic in \cite{GW19}. 

It is only recently that in
\cite{coutin_steins_2012}, Barbour's result was extended to the convergence in some
fractional Sobolev spaces, instead of $\mathbb D$. This result was then improved in
\cite{Coutin:2019aa} and \cite{coutin:hal-02098892} by allowing, at last,
test functions that are only Lipschitz continuous. When $\mathcal{C}$ is the set of Lipschitz
continuous functions, the induced distance is stronger than the Prokohorov
distance: not only does it imply the convergence in distribution, but also the
convergence of the first moments, see \cite{Villani2003}.

For these test functions, from the theoretical point of view two novelties arise. 
As could be expected, their reduced regularity induces additional 
technicalities but more strikingly, it also yields different rates of
convergence. The benefit is that the applicability is enriched, for this new set of test functions 
embraces many more functions of interest in practice. This can be used, for instance, to derive the
convergence rate for the maximum of a random walk towards the
maximum of the Brownian motion; a result which cannot be established by the
basic Stein method, as we do not have a convenient characterization of the law
of the maximum of the Brownian motion.

Furthermore, the set of Lipschitz functions is remarkably stable with respect to
many operations like time-change, convolution, reflection, etc., so that we can
deduce from a master Theorem, many new convergence rates which do not seem to be
accessible from scratch.

In \cite{besancon_steins_2018}, Stein's method was used to study the rate of convergence 
in the diffusion approximations of the $M/M/1$ and the 
$M/M/\infty$ queues. The two models involved very different ad-hoc techniques
and proved to be difficult to generalize, but led to a satisfying estimate of the
speed of convergence ($n^{-1/2}$, where $n$ is the scaling factor of the
respective models). The approach of the present paper is more general, at the
expense of a lower rate of convergence ($n^{-1/6}$), but covers a much wider class of
processes.

The paper is organized as follows. In Section \ref{sec:preliminaries}, we give
some estimates of the distance between a multivariate point process and its
affine interpolation, depending on the intensity of its jumps. In Section
\ref{sec:lipsch-funct}, we establish that sets of Lipschitz functionals on some function spaces enjoy 
some remarkable stability properties. These properties are crucial to
transfer the convergence rate established in the master
Theorem~\ref{cor:convergence_integral_mesures_poisson}, to more general
processes. In Section
\ref{sec:appliMarkov}, we provide a general result regarding the rate of convergence in the diffusion approximations of a wide class of CTMC's, and then apply this result to various practical processes in queueing, biology and epidemiology. We then quantify the convergence of some functionals of Hawkes processes in Section \ref{sec:limit-theorem-hawkes}.

\section{Preliminaries}
\label{sec:preliminaries}
Throughout the paper, we
fix a time horizon $T>0$. 
For a fixed integer $d$, we denote by $\mathbb D_{T}$ the Skorohod space (i.e. the space of right continuous with left
limits (rcll) functions from $[0,T]$ into $\R^d$). It contains $\cont_{T}$, the space
of continuous functions on $[0,T]$. We denote the sup-norm over $[0,T]$ by
\begin{equation*}
  \|f\|_{\infty,T}=\sup_{t\in[0,T]}\| f(t) \|_{\R^d},
\end{equation*}
for $f\in \mathbb D_{T}$. In what follows, inequalities will be valid up to
irrelevant multiplicative constants, and we write
\begin{equation*}
  a\lec{\alpha} b
\end{equation*}
to mean that there exists $c>0$ which depends only on $\alpha$ such that $a\le
c\, b$.

\subsection{Affine interpolations}
\label{sec:reduct-finite-dimens}

In the forthcoming examples, we have processes whose sample-paths are only
right-continuous-with-left-limits (rcll) and we wish to compare them to the
Brownian motion (BM) or other diffusions whose sample-paths are  continuous.
In the usual proof of the Donsker theorem, the common probability space on
which the convergence is proved is the Skorohod space of rcll functions. Here we aim at a more precise result. 
Actually, our goal is to estimate which factor is
responsible for the slowest rate of convergence: the difference of regularity or
the difference in the dynamics.

It leads us to consider the distance between the affine interpolation of the
processes under scrutiny and the  affine interpolation of the BM, instead of the distance between their
nominal trajectory and that of the BM.

\begin{definition}
  A partition $\pi$ of $[0,T]$ is a sequence
  \begin{equation*}
    \pi=\{0=t_{0}<t_{1}<\ldots<t_{l(\pi)}=T\},
  \end{equation*}
  where $l(\pi)$ is the number of subintervals defined by $\pi$.
  We denote by $|\pi|$ its mesh
  \begin{equation*}
    |\pi|=\sup_{i \in \llbracket 0,l(\pi)-1\rrbracket} |t_{i+1}-t_{i}|.
  \end{equation*}
  We denote by $\Sigma_{T}$, the set of partitions of $[0,T]$.
\end{definition}
For any function $f \in \mathbb D([0,T],\R^{d})$ and any $\pi\in \Sigma_{T}$, we denote by
$\Xi_{\pi}f$ the affine interpolation of $f$ on $[0,T]$ along $\pi$, namely for 
all $t\in [0,T]$, 
\begin{align*}
  (\Xi_\pi f)(t)&=\sum_{i=0}^{l(\pi)-1} \frac{f(t_{i+1})-f(t_{i})}{t_{i+1}-t_{i}}\,(t-t_{i})\car_{[t_{i},t_{i+1})}(t)\\
                &=\sum_{i=0}^{l(\pi)-1} \frac{f(t_{i+1})-f(t_{i})}{\sqrt{t_{i+1}-t_{i}}}\ h_{i}^{\pi}(t),
\end{align*}
where 
\begin{equation*}
  h_{i}^{\pi}(t)=\frac{1}{\sqrt{t_{i+1}-t_{i}}}\int_{0}^{t}\car_{[t_{i},t_{i+1})}(s)\dif s,\quad i\in \llbracket 0,l(\pi)-1\rrbracket. 
\end{equation*}
When $\pi=\left\{iT/n,\, i \in \llbracket 0,n \rrbracket\right\}$, we denote $\Xi_{\pi}$ by
$\Xi_{n}$ and $h_{i}^{\pi}$ by $h_{i}^{n}$ for all $i$. 

\bigskip

When a point process has not too many jumps per subinterval of a
partition~$\pi$, its affine interpolation along $\pi$ does not deviate too much
from its nominal path. More precisely we have the following result, 
  
\begin{theorem} \label{thm_mycore:bound}
  Let $m\in\N^*$, and consider $(X(t),t\in[0,T])$ a $\R^d$-valued point process admitting the representation
  \begin{equation}\label{eq_stability_core:3}
    X(t)=\sum_{k=1}^{m}\left(\int_{0}^{t}\int_{\R^{+}}\car_{\{z\le \varphi_{k}(s,X(s^{-}))\}}\dif \Pois_{k}(s,z)\right) \, \zeta_{k}, 
    \quad t \in [0,T], 
  \end{equation}
  where the $\Pois_{k}$'s are independent Poisson measures of respective intensity measures
  $\rho_k\, \dif s\otimes \dif z$, $k\in\llbracket 1,m \rrbracket$, and for any $k\in \llbracket 1,m \rrbracket$, 
  $\zeta_{k}\in \R^{d}$ and $\varphi_{k}$ is a bounded function $[0,T] \times \R^d\to \R^+$. 
  Then, for any $\pi\in\Sigma_{T}$ we have that 
  \begin{equation*}
    \esp{\|X-\Xi_{\pi}X\|_{\infty,T}}\lec{}\sum_{k=1}^{m}\|\zeta_{k}\|_{\R^{d}}\,\Psi \Bigl(l(\pi),\, \rho_{k}|\pi| \|\varphi_{k}\|_{\infty}\Bigr),
  \end{equation*}
  where for all $(n,x)\in \N^*\times \R$, 
  \begin{equation*}
    \Psi(n,x)=\frac{\log(n e^{x/n})}{\log(n x^{-1}\log(n e^{x/n}) )}\cdotp
  \end{equation*}
\end{theorem}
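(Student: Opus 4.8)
The plan is to reduce the statement to a one-dimensional problem about counting processes and then to an extremal estimate for the number of jumps falling into the subintervals of $\pi$. Setting $N_k(t)=\int_0^t\int_{\R^+}\car_{\{z\le\varphi_k(s,X(s^-))\}}\dif\Pois_k(s,z)$, the representation \eqref{eq_stability_core:3} reads $X=\sum_{k=1}^m N_k\,\zeta_k$, and since $\Xi_\pi$ is linear we have $X-\Xi_\pi X=\sum_{k=1}^m (N_k-\Xi_\pi N_k)\,\zeta_k$. The triangle inequality then gives $\|X-\Xi_\pi X\|_{\infty,T}\le\sum_{k=1}^m\|\zeta_k\|_{\R^d}\,\|N_k-\Xi_\pi N_k\|_{\infty,T}$, so it is enough to bound $\esp{\|N_k-\Xi_\pi N_k\|_{\infty,T}}$ for each fixed $k$ by $\Psi\bigl(l(\pi),\rho_k|\pi|\,\|\varphi_k\|_\infty\bigr)$.

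First I would establish a pathwise estimate. Each $N_k$ is a nondecreasing, integer-valued, pure-jump process, so on a subinterval $[t_i,t_{i+1})$ both $N_k$ and its chord $\Xi_\pi N_k$ take values in $[N_k(t_i),N_k(t_{i+1})]$; hence the deviation there is at most the increment $\Delta_i^k:=N_k(t_{i+1})-N_k(t_i)$, i.e. the number of jumps of $N_k$ in that subinterval. Taking the supremum over $[0,T]$ yields the key pathwise bound $\|N_k-\Xi_\pi N_k\|_{\infty,T}\le\max_{0\le i\le l(\pi)-1}\Delta_i^k$.

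Next I would remove the dependence hidden in $\varphi_k(s,X(s^-))$ by a domination argument. Since $0\le\varphi_k\le\|\varphi_k\|_\infty$, the inequality $\car_{\{z\le\varphi_k(s,X(s^-))\}}\le\car_{\{z\le\|\varphi_k\|_\infty\}}$ shows that, pathwise, $N_k$ is dominated by the homogeneous process $\bar N_k(t)=\Pois_k\bigl(\{(s,z):s\le t,\ z\le\|\varphi_k\|_\infty\}\bigr)$, a Poisson process of rate $\rho_k\|\varphi_k\|_\infty$. Consequently $\Delta_i^k\le\bar\Delta_i^k$, where the $\bar\Delta_i^k$ are now genuinely independent Poisson variables with parameters $\rho_k\|\varphi_k\|_\infty(t_{i+1}-t_i)\le\rho_k|\pi|\,\|\varphi_k\|_\infty=:x$. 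It then remains to bound $\esp{\max_i\bar\Delta_i^k}$, the expected maximum of $n:=l(\pi)$ independent variables each stochastically dominated by a $\mathrm{Pois}(x)$ variable.

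The heart of the matter is this last extremal estimate, which is where $\Psi$ is produced. I would proceed through the Laplace transform: for every $\theta>0$, Jensen's inequality and $e^{\theta\max_i\bar\Delta_i^k}\le\sum_i e^{\theta\bar\Delta_i^k}$ give $\esp{\max_i\bar\Delta_i^k}\le\theta^{-1}\log\esp{\textstyle\sum_i e^{\theta\bar\Delta_i^k}}\le\theta^{-1}\bigl(\log n+x(e^\theta-1)\bigr)$, after which $\theta$ is tuned to $(n,x)$; equivalently, one locates the threshold $t$ at which the union bound $n\,\pr{\mathrm{Pois}(x)\ge t}$ crosses $1$, using the Poisson tail estimate $\pr{\mathrm{Pois}(x)\ge t}\le e^{-x}(ex/t)^t$ and summing the geometrically decaying remainder of the tail series. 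Optimising, and then replacing the implicitly defined threshold by an explicit majorant, yields a bound of the announced form; the factor $e^{x/n}$ and the nested logarithm in the denominator are precisely the artefacts of making this threshold explicit while keeping the estimate uniform across the regimes of small and large $x$. I expect the bookkeeping in this optimisation — in particular verifying that the explicit expression $\Psi$ dominates the implicit threshold for all admissible $(n,x)$ — to be the most delicate and error-prone step, whereas the reduction and the domination arguments above are routine.
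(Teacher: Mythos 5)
Your proposal is correct and takes essentially the same route as the paper: bound the deviation from the interpolation by the maximum of the per-subinterval jump counts, dominate those increments by independent Poisson variables of parameter $\rho_k|\pi|\,\|\varphi_k\|_{\infty}$, and control the expected maximum of $l(\pi)$ such variables by the exponential-moment/Jensen/``max $\le$ sum'' argument with an optimized $\theta$ (the paper isolates exactly this last step as Proposition~\ref{prop:momentPoisson}, carrying out the optimization via the Lambert $W$ function, which is where $\Psi$ comes from). Your pathwise estimate exploiting monotonicity of each counting process is even marginally cleaner than the paper's factor-$2$ bound.
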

  %
  %

  \begin{proof}[Proof of Theorem~\protect\ref{thm_mycore:bound}]
  Let $\pi=\left\{t_{i},\,i\in \llbracket 0,n \rrbracket\right\}$.  For any $t\in [0,T]$ there exists $i\leq n-1$ such that
    $t\in[t_{i},\, t_{i+1}) $ and
    \begin{align*}
      \left\|X(t)-\Xi_{\pi}X(t)\right\|_{\R^d}
        &=
        \left\|X(t) - X(t_{i})-\frac{X(t_{i+1})-
            X(t_{i})}{t_{i+1}-t_{i}}\, (t-t_{i})\right\|_{\R^d}\\
        &\le 2
        \sup_{t\in [t_{i},t_{i+1}]}\left\| X(t)- X(t_{i})\right\|_{\R^d},\label{eq_mycore:3}
      \end{align*}
    so that
    \begin{equation*}
      \esp{\parallel X -\Xi_{\pi}X \parallel_{\infty,T}}
      \le 2
      \esp{\max_{i\in \llbracket 0,n-1 \rrbracket} \| X(t_{i+1})-X(t_{i})\|_{\R^d}}.
    \end{equation*}
    Now notice that for all $k\in\llbracket 1,m \rrbracket$, 
    \begin{align*}
      \int_{t_{i}}^{t_{i+1}}\int_{\R^{+}} \car_{\{z\le \varphi_k(s,X(s^{-}))\}}\dif \Pois_k(s,z) &\le  \int_{t_{i}}^{t_{i+1}}\int_{\R^{+}} \car_{\{z\le\|\varphi_k\|_{\infty}\}}\dif \Pois_k(s,z)\\ &:=M_{k,i}^{\pi}.
\end{align*}
But the $M_{k,i}^{\pi},\,i \in \llbracket 0,l(\pi)-1\rrbracket$ are independent Poisson random
variables of respective parameters $\rho_k(t_{i+1}-t_{i})\|\varphi_k\|_{\infty}$, so they are strongly dominated
by a family of $l(\pi)$ independent Poisson random variables of respective parameters~$\rho_k|\pi|\,\|\varphi_k\|_{\infty}$. The result then follows from Proposition~\ref{prop:momentPoisson} below. 
\end{proof}

As to the affine interpolation of the Brownian motion,  according to
\cite[Proposition 13.20]{friz_multidimensional_2010},  there exists $c>0$ such
that for any partition~$\pi$
\begin{equation}\label{eq_stability_core:5}
  \esp{\|\Xi_\pi B-B\|_{W_{\eta,p}}^{p}}^{1/p}\lec{} |\pi|^{1/2-\eta}.
\end{equation}

\subsection{Malliavin gradient}
\label{sec:malliavin-gradient}
We give the minimum elements of Malliavin calculus to understand the sequel.
More advanced material is necessary to prove
Theorem~\ref{thm_stability_core:CD_lipschitz}, see \cite{coutin:hal-02098892}.
We denote by $\H_{T}$ the Hilbert space
\begin{equation*}
  \H_{T}:=\left\{ h\in \cont_{T}, \, \exists \dot h\in L^{2}([0,T],\dif s) \text{ such that } h(t)=\int_{0}^{t}\dot h(s)\dif s \right\}.
\end{equation*}
The function $\dot h$ is unique so that we can define
\begin{equation*}
  \|h\|_{\H_{T}}^{2}=\int_{0}^{T}\dot h(s)^{2}\dif s.
\end{equation*}
By convention, we identify $\H_{T}$ with its dual $(\H_{T})^{*}$.

Consider $(Z_{n},\, n\ge 1)$, a sequence of independent, standard Gaussian random
variables and let $(z_{n},\, n\ge 1)$ be a complete orthonormal basis of
$\H_T$. Then, we know from \cite{ito_convergence_1968} that
\begin{equation}\label{eq_core:6}
\sum_{n=1}^{N} Z_{n}\, z_{n} \xrightarrow{N\to \infty}  B:=\sum_{n=1}^{\infty} Z_{n}\, z_{n}\text{ in }\cont_T \text{ with probability } 1,
\end{equation}
where $B$ is a Brownian motion. We clearly have the diagram
\begin{equation}\label{eq_core:8}
\cont_{T}^{*}\xrightarrow{\emb^{*}} (\H_T)^{*}\simeq \H_T\xrightarrow{\emb} \cont_T,
\end{equation}
where $\emb$ is the canonical embedding from $\H_{T}$ into $\cont_{T}$.
We denote by $\mu$ the law of $B$ on $\cont_{T}$, and by 
$L^{2}(\cont_{T};\mu)$, the space of functions $F$ from $\cont_{T}$ into
$\R$ such that
\begin{equation*}
 \esp{F(B)^{2}}  <\infty.
\end{equation*}
\begin{definition}[Wiener integral]
  The Wiener integral, denoted as $\delta$, is the isometric extension
  of the map
  \begin{align*}
  \delta\, :\,  \emb ^*({\cont_T}^*)\subset\H_{T}&\longrightarrow  L^2(\cont_{T};\mu)\\
  \emb ^*(\eta)  &\longmapsto \<\eta,\, y\>_{{\cont_T}^*,{\cont_T}}.
  \end{align*}
\end{definition}
\noindent This means that if $h=\lim_{n\to \infty } \emb ^{*}(\eta_{n})$ in
$\bl_{1,2}$,
\begin{equation*}
\delta h(B)=\lim_{n\to \infty} \<\eta_{n},\, y\>_{{\cont_T}^*,{\cont_T}} \text{ in }L^{2}(\mu).
\end{equation*}
\begin{definition}
  \label{def_donsker_final:2} Let $X$ be a Banach space. A function $F\, :\, \cont_T\to
  X$ is said to be cylindrical if it is of the form
  \begin{equation*}
  F(y)=\sum_{j=1}^k f_j(\diveB h_1(y),\cdots,\diveB h_k(y))\, x_j,
  \end{equation*}
  where for any $j\in \llbracket 1,k \rrbracket$, $f_j$ belongs to the Schwartz space on $\R^k$, $(h_1,\cdots, h_k)$ are
  elements of $\H_{T}$ and $(x_1,\cdots,x_j)$ belong to $X$. The set of such
  functions is denoted by $\cyl(X)$.

  For $h\in \H_T$,
  \begin{equation*}
  \< \nabla F, \, h\>_{ \H_T}=\sum_{j=1}^k\sum_{l=1}^{k} \partial_lf(\diveB h_1(y),\cdots,\diveB h_k(y))\, \<h_l,\, h\>_{ \H_T}\ x_j,
  \end{equation*}
  which is equivalent to say
  \begin{equation*}
  \nabla F = \sum_{j,l=1}^k \partial_jf(\diveB h_1(y),\cdots,\diveB h_k(y))\, h_l\otimes\ x_j.
\end{equation*}
The space $\mathbb D_{1,2}(X)$ is the closure of cylindrical functions with respect
to the norm of $L^{2}(\cont_T; \H_T\otimes X)$. An element of $\mathbb
D_{1,2}(X)$ is said to be Gross-Sobolev differentiable and $\nabla F$ belongs to
$\H_T\otimes X$ with probability~$1$.

We can iterate the construction to higher order gradients and thus define
$\nabla^{(k)}F$ for any $k\ge 1$, provided that $F$ is sufficiently regular.
\end{definition}
We actually need a stronger notion of weak differentiability. The ordinary
notion of gradient we have just defined, induces that $\nabla F$ belongs almost
surely to $\H_{T}$. Hereafter we also need that it belongs to a smaller space, namely the 
dual of $L^{2 }([0,T],\dif s )$. Recall that we have identified $\H_{T}$ with
itself, so that we cannot identify $L^{2}$ with its dual. It is proved in
\cite{Coutin:2019aa} that
\begin{multline*}
\J_{T}:=\left( L^{2}([0,T],\dif s)\right)^{*}\\
  \simeq \left\{ h\in \cont_{T}, \exists \hat h\in L^{2}([0,T],\dif s ) \text{ such that } h(t)=\int_{0}^{t}\int_{s}^{1}\hat h(u)\dif u \dif s \right\}.
\end{multline*}
\begin{definition}
  We denote by $\Upsilon_{T}$ the subset of functions $F$ in  $\mathbb D_{2,2}(\R)$
which satisfy
  \begin{equation}
    \label{eq_preliminaries:5}
    \left|     \<\nabla^{(2)}F(x)-\nabla^{(2)}F(x+g),\, h\otimes k \>_{\H_{T}} \right|\le \|g\|_{\cont_{T}}\, \|h\|_{L^{2}}\|k\|_{L^{2}},
    \end{equation}
    for any $x\in\cont_{T} $, $g\in \H_{T}$, $h, \,k \in L^{2}([0,T],\dif s)$. This means that
    $\nabla^{(2)}F$ belongs to $\Upsilon_{T}^{\otimes 2}$ and is $\H_{T}$-Lipschitz
    continuous on $\cont_{T}$.
\end{definition}

\section{Lipschitz functionals}
\label{sec:lipsch-funct}
\begin{definition}
Let $(E,d_{E})$ and $(G,d_{G})$ be two metric spaces. A function $F\, :\, E\to G$ is said to be
Lipschitz continuous whenever there exists $c>0$ such that for any $x,y\in E$,
\begin{equation}\label{eq_stability_core:4}
d_{G}(F(x),F(y))\le c \, d_{E}(x,y).
\end{equation}
The minimum value of $c$ such that \eqref{eq_stability_core:4} holds, is the Lipschitz norm of $F$. We denote by
$\Lip_{\alpha}(E\to G,d_{E})$ the set of Lipschitz continuous functions from $E$ to $G$ having Lipschitz norm less
than~$\alpha$.
\end{definition}
When $E$ is a functional space, the set of Lipschitz functions is rich enough to
be stable by some interesting transformations. 
%
%
\begin{lemma}
  \label{thm_mycore:timeChange}
  Let $r$ be a positive integrable function on $[0,T]$ and set
  \begin{equation}\label{eq_stability_core:def_changement_temps}
     \gamma(t)=\int_{0}^{t} r(s)\dif s.
   \end{equation}
   Then, the map
   \begin{align*}
     \Gamma\, :\, \cont_{T}&\longrightarrow \cont_{\gamma(T)}\\
     f&\longmapsto f\circ \gamma
   \end{align*}
   is invertible and Lipschitz continuous with Lipschitz norm~$1$. Moreover, if
   $F$ belongs to $\Upsilon_{T}$ then $F\circ \Gamma$ belongs to $\Upsilon_{\gamma(T)}$.
\end{lemma}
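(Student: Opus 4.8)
The plan is to treat the two assertions separately: the first is elementary, while the second rests on a chain rule for the second Gross--Sobolev derivative together with the change of variables induced by the time change. For invertibility and the Lipschitz norm, I would first note that, $r$ being positive and integrable, $\gamma$ is continuous, strictly increasing, with $\gamma(0)=0$; hence it is a homeomorphism of $[0,T]$ onto $[0,\gamma(T)]$ with continuous inverse $\gamma^{-1}$. Since $\gamma$ is onto, $\sup_{t\in[0,T]}\|f(\gamma(t))\|_{\R^d}=\sup_{u\in[0,\gamma(T)]}\|f(u)\|_{\R^d}$, so precomposition by $\gamma$ preserves the sup-norm exactly. Being linear, $\Gamma$ is then an isometry, hence of Lipschitz norm $1$, and it is a bijection whose inverse $g\mapsto g\circ\gamma^{-1}$ is again a sup-norm isometry.

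For the stability of $\Upsilon$, write $S=\gamma(T)$ and $G=F\circ\Gamma$. The core is a chain rule for $\nabla^{(2)}$ under the \emph{linear} map $\Gamma$. I would establish it first on cylindrical functions: if $F(y)=f(\delta h_1(y),\dots,\delta h_k(y))$ as in Definition~\ref{def_donsker_final:2}, then $\delta h_i(\Gamma z)=\langle\eta_i,\Gamma z\rangle=\langle\Gamma^*\eta_i,z\rangle$ is again a Wiener integral, so $G$ is cylindrical, and a direct differentiation yields, for Cameron--Martin directions $a,b\in\H_S$,
\begin{equation*}
\langle\nabla^{(2)}G(x),\,a\otimes b\rangle_{\H_S}=\langle\nabla^{(2)}F(\Gamma x),\,\Gamma_\H a\otimes\Gamma_\H b\rangle_{\H_T},
\end{equation*}
with no first-order term since $\Gamma$ is linear, where $\Gamma_\H\colon\H_S\to\H_T$ is the restriction of $\Gamma$ to the Cameron--Martin spaces. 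Passing to the closure defining $\mathbb D_{2,2}$ extends this to every $F\in\Upsilon_T$, once one checks that $G\in\mathbb D_{2,2}(\R)$ over $\cont_S$, which follows from the boundedness of $\Gamma$ and the growth controlled by the Lipschitz property of $\nabla^{(2)}F$.

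It then remains to verify the defining inequality \eqref{eq_preliminaries:5} of $\Upsilon_S$ for $G$. Differentiating, $\Gamma_\H a=a\circ\gamma\in\H_T$ with $(a\circ\gamma)'=(\dot a\circ\gamma)\,r$; under the identification $\H\simeq L^2$ this is the operator $\tau\phi=(\phi\circ\gamma)\,r$ from $L^2([0,S])$ to $L^2([0,T])$. Using the chain rule and $\Gamma(x+g)=\Gamma x+\Gamma g$, inequality \eqref{eq_preliminaries:5} for $G$ reduces to the same inequality for $F$ at the point $\Gamma x$, increment $\Gamma g$, and test functions $\tau\phi,\tau\chi$. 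Three norms are then transported: the sup-norm factor by the isometry above, $\|\Gamma g\|_{\cont_T}=\|g\|_{\cont_S}$, and the two $L^2$-factors by the change of variables $u=\gamma(t)$, $\du=r(t)\dif t$, which gives $\|\tau\phi\|_{L^2([0,T])}^2=\int_0^S\phi(u)^2\,r(\gamma^{-1}(u))\,\du$. Combining these produces the $\Upsilon_S$-estimate for $G$, i.e. the $\H_S$-Lipschitz continuity of $\nabla^{(2)}(F\circ\Gamma)$, which is the claim $F\circ\Gamma\in\Upsilon_{\gamma(T)}$.

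I expect the main obstacle to be twofold. First, making the chain rule rigorous in the Gross--Sobolev setting: $\Gamma B$ is a time-changed, hence non-standard, Brownian motion, so $\nabla^{(2)}G$ must be identified through the cylindrical approximation and a closability argument rather than by naive substitution, and the $L^2$/$\J_T$ duality used to pair $\nabla^{(2)}F$ with the $L^2$ test functions must be carried correctly across the two time scales. Second, controlling the Jacobian: the time change is a sup-norm isometry but \emph{not} a Cameron--Martin isometry, since $\tau$ carries the factor $r$, so the last change of variables is precisely where the quantitative effect of the time change enters and must be kept under control.
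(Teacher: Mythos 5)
Your treatment of the first assertion is correct and coincides with the part of the proof that the paper dismisses as straightforward: $\gamma$ is a homeomorphism of $[0,T]$ onto $[0,\gamma(T)]$, so precomposition is a linear sup-norm isometry with isometric inverse. For the second assertion your route is not genuinely different from the paper's either: the paper's entire argument is the one-sentence claim that $\Gamma$ is a linear continuous bijection mapping $\H_{T}$ (resp.\ $\J_{T}$) bijectively onto $\H_{\gamma(T)}$ (resp.\ $\J_{\gamma(T)}$), whence stability of $\Upsilon$ ``follows immediately''; your cylindrical approximation, the chain rule with no first-order term, and the transport of test functions are exactly what that sentence abbreviates. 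You additionally flag, to your credit, the measure-theoretic point the paper ignores (the image of Wiener measure under $\Gamma$ is the law of a time-changed Brownian motion, singular with respect to Wiener measure, so $F\circ\Gamma$ must be made sense of by closability rather than substitution).

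The genuine problem is that your final step does not close, and your last paragraph in effect concedes this. Your own computation gives
\begin{equation*}
\left\|\tau\phi\right\|_{L^{2}([0,T])}^{2}=\int_{0}^{\gamma(T)}\phi(u)^{2}\,r\bigl(\gamma^{-1}(u)\bigr)\dif u,
\end{equation*}
and under the lemma's hypothesis --- $r$ positive and \emph{integrable}, nothing more --- this is not dominated by $\|\phi\|_{L^{2}([0,\gamma(T)])}^{2}$, nor by any constant multiple of it. For unbounded integrable $r$, say $r(t)=t^{-1/2}$, it is infinite for suitable $\phi\in L^{2}$; in that case $\Gamma$ does not even map $\H_{\gamma(T)}$ into $\H_{T}$, so the increment $\Gamma g$ appearing in your reduction need not be an admissible direction in \eqref{eq_preliminaries:5}. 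Hence ``combining these produces the $\Upsilon_{\gamma(T)}$-estimate'' is a non sequitur: what the transport actually yields is \eqref{eq_preliminaries:5} with the extra factor $\|r\|_{\infty}$, so one must assume $r\in L^{\infty}$, and even then one lands in $\Upsilon_{\gamma(T)}$ only after either assuming $\|r\|_{\infty}\le 1$ or abandoning the constant-one normalization built into the definition of $\Upsilon$. Be aware that the paper's own proof has exactly the same hole --- the asserted bijectivity $\H_{T}\leftrightarrow\H_{\gamma(T)}$ is false in this generality and requires $r$ and $1/r$ bounded --- so the obstacle you identified but did not resolve is a real defect of the lemma as stated. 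It is harmless in the paper's applications, where $r$ is of the form $t\mapsto r_{k}(t,\Lambda(t))$, continuous hence bounded on $[0,T]$, and where a multiplicative constant in the rates is irrelevant; but a complete proof must make that boundedness explicit and track the resulting constant.
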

\begin{proof}
  The first part is straightforward. Since $\Gamma $ is a linear continuous and
  bijective map from $\cont_{T}$ to $\cont_{\gamma(T)}$, which maps $\H_{T}$
  (respectively $\J_{T}$)
  bijectively onto $\H_{\gamma(T)}$ (respectively $\J_{\gamma(T)}$), the second
  assertion follows immediately.
\end{proof}

%
%

Let us now fix $m,d\in\N^*$, and 
consider the integral equation 
\begin{equation}
  \label{eq_stability_core:1}
  y(t)=y(0)+\int_{0}^{t} Ay(s)\dif s+ f(t),\quad t\ge 0,
\end{equation}
where $\mathbf A\in\mathfrak M_{d,m}(\R)$ and $f\in\cont_{T}(\R^d)$. 
The unique solution
  of \eqref{eq_stability_core:1} can be written as
   \begin{equation}
    \label{eq_stability_core:2}
    S(f)(t)=Ae^{tA}f(0)-Af(t)+A\int_{0}^{t}e^{(t-r)A}f(r)\dif r.
  \end{equation}
We have the following, 
\begin{theorem}\label{thm_stability_core:ODE}
  For $A\in \mathfrak M_{n}(\R)$, the map
  \begin{align}
    \Theta_{A} \, :\, \cont_{T}(\R^d)&\longmapsto \cont_{T}(\R^d)\nonumber\\
    f&\longmapsto S(f) \label{eq:defTheta}
  \end{align}
  where $S(f)$ is the solution of~\eqref{eq_stability_core:1}, is Lipschitz continuous. Moreover, if
  $F$ belongs to $\Upsilon_{T}$, then so does $F\circ S$. 
\end{theorem}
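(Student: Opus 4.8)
The plan is to treat the two assertions in turn, following the template of Lemma~\ref{thm_mycore:timeChange}: first the elementary Lipschitz bound, then the transfer of the $\Upsilon_T$ property through a chain rule, using that $S$ restricts well to the relevant spaces. For the first assertion, observe that $S$ is linear, so that $S(f)-S(g)=S(f-g)$ and it suffices to bound $\|S(f)\|_{\infty,T}$ by $\|f\|_{\infty,T}$. On $[0,T]$ one has $\|e^{tA}\|\le e^{T\|A\|}$, whence each of the three terms in~\eqref{eq_stability_core:2} is controlled by a constant depending only on $A$ and $T$ times $\|f\|_{\infty,T}$: the first by $\|A\|e^{T\|A\|}\|f\|_{\infty,T}$, the second by $\|A\|\|f\|_{\infty,T}$, and the integral term by $\|A\|e^{T\|A\|}T\|f\|_{\infty,T}$. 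Summing gives $\|S(f)\|_{\infty,T}\lec{}\|f\|_{\infty,T}$, which is the Lipschitz continuity of $\Theta_A$.

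For the second assertion, the key observation is that every $f\in\H_T$ satisfies $f(0)=0$, so that the initial-value term $Ae^{tA}f(0)$ of~\eqref{eq_stability_core:2} vanishes on $\H_T$. Thus on $\H_T$ the operator $S$ coincides with
\[
  \widetilde{S}f(t)=-Af(t)+A\int_0^t e^{(t-r)A}f(r)\dif r,
\]
which, being the sum of a pointwise matrix multiplication and a Volterra operator with bounded kernel, extends to a bounded operator both on $\cont_T$ and on $L^2([0,T],\dif s)$. I would then check that $\widetilde{S}$ maps $\H_T$ into itself: differentiating under the integral gives
\[
  \tfrac{d}{dt}\widetilde{S}f(t)=-A\dot f(t)+Af(t)+A^2\int_0^t e^{(t-r)A}f(r)\dif r,
\]
which lies in $L^2$ whenever $\dot f\in L^2$, while $\widetilde{S}f(0)=0$; a Poincaré-type bound ($\|f\|_{L^2}\lec{}\|f\|_{\H_T}$ since $f(0)=0$) shows this restriction is bounded $\H_T\to\H_T$.

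With these mapping properties in hand, I would invoke the chain rule for the Gross--Sobolev gradient of a composition with a bounded linear map. On cylindrical functions a direct computation gives, for $h,k\in\H_T$,
\[
  \<\nabla^{(2)}(F\circ S)(x),\,h\otimes k\>_{\H_T}=\<\nabla^{(2)}F(Sx),\,\widetilde{S}h\otimes\widetilde{S}k\>_{\H_T},
\]
and the $\H_T\to\H_T$ boundedness of $\widetilde{S}$ propagates this identity to the $\mathbb D_{2,2}$ closure, ensuring $F\circ S\in\mathbb D_{2,2}(\R)$. To obtain the $\Upsilon_T$ estimate I would extend this identity from $h,k\in\H_T$ to $h,k\in L^2$ by density, using the $L^2$-boundedness of $\widetilde{S}$, and then apply the defining inequality~\eqref{eq_preliminaries:5} for $F$ at the base point $Sx$, with increment $Sg=\widetilde{S}g\in\H_T$ and test directions $\widetilde{S}h,\widetilde{S}k$. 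This yields
\[
  \left|\<\nabla^{(2)}(F\circ S)(x)-\nabla^{(2)}(F\circ S)(x+g),\,h\otimes k\>_{\H_T}\right|\le \|Sg\|_{\cont_T}\,\|\widetilde{S}h\|_{L^2}\,\|\widetilde{S}k\|_{L^2},
\]
and bounding $\|Sg\|_{\cont_T}\lec{}\|g\|_{\cont_T}$ together with $\|\widetilde{S}h\|_{L^2}\lec{}\|h\|_{L^2}$ gives the required Lipschitz bound, with a constant depending only on $A$ and $T$; hence $F\circ S\in\Upsilon_T$.

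I expect the main obstacle to be the second-gradient chain rule across the duality $\J_T=(L^2)^*$. Unlike the time-change map of Lemma~\ref{thm_mycore:timeChange}, which is an isometry mapping $\J_T$ bijectively onto $\J_{\gamma(T)}$, the operator $S$ is only bounded, so one cannot expect $S(\J_T)\subseteq\J_T$ (the boundary condition $\dot h(T)=0$ characterizing $\J_T$ is destroyed by $S$). The correct substitute is the $L^2$-boundedness of the direction-transform $\widetilde{S}$, available precisely because the initial-value term of $S$ annihilates $\H_T$; tracking the resulting non-unit constants is what distinguishes this argument from the isometric case, and care is needed to justify the density extension of the second-gradient identity from $\H_T$-directions to $L^2$-directions.
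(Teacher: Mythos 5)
Your proof of the first assertion is the same as the paper's: linearity of $S$ plus the term-by-term bound $\|S(f)\|_{\infty,T}\lec{A,T}\|f\|_{\infty,T}$ read off from \eqref{eq_stability_core:2}. For the second assertion, however, you take a genuinely different route, and in fact a more defensible one. The paper's own proof is two lines: it asserts that $S(\H_{T})\subset \H_{T}$ and $S(\J_{T})\subset \J_{T}$, and concludes that $\Upsilon_{T}$ is stable. You prove the first inclusion (via $f(0)=0$ on $\H_{T}$ and the computation of $\tfrac{d}{dt}\widetilde{S}f$), but you replace the second by the statement that actually carries the argument: $S|_{\H_{T}}=\widetilde{S}$ extends to a bounded operator on $L^{2}([0,T],\dif s)$, so that in the chain-rule identity $\<\nabla^{(2)}(F\circ S)(x),\,h\otimes k\>_{\H_{T}}=\<\nabla^{(2)}F(Sx),\,\widetilde{S}h\otimes \widetilde{S}k\>_{\H_{T}}$ the right-hand side is controlled for $h,k\in L^{2}$, which is exactly what \eqref{eq_preliminaries:5} demands. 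Your skepticism about the literal inclusion $S(\J_{T})\subset\J_{T}$ is justified: for $h\in\J_{T}$ one has $h(0)=0$ and $\dot h(T)=0$, so $\tfrac{d}{dt}(Sh)(T)=Ah(T)+A^{2}\int_{0}^{T}e^{(T-r)A}h(r)\dif r$, which is generically nonzero (take $h(t)=t^{2}(T-t)^{2}$ and $A$ a nonzero scalar, giving $a^{2}\int_{0}^{T}e^{a(T-r)}r^{2}(T-r)^{2}\dif r>0$); hence $Sh$ violates the boundary condition characterizing $\J_{T}$. What is true, and what your argument establishes, is that the $\H_{T}$-adjoint of $S|_{\H_{T}}$ --- the operator that actually appears in $\nabla^{(2)}(F\circ S)$ --- preserves $\J_{T}\simeq (L^{2})^{*}$, precisely because $\widetilde{S}$ is $L^{2}$-bounded. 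So the paper buys brevity at the cost of an inclusion that is false as stated (at best an abuse of notation for this dual statement), while your version supplies the correct mechanism at the cost of justifying the second-order chain rule on $\mathbb D_{2,2}$ and the density extension from $\H_{T}$- to $L^{2}$-directions, both standard. One residual point common to both proofs: \eqref{eq_preliminaries:5} is written with constant $1$, so stability of $\Upsilon_{T}$ must be understood up to the multiplicative constant (depending on $A$ and $T$) that you, unlike the paper, track explicitly.
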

\begin{proof}
  Since $\Theta_{A}$ is linear, we just have to prove that there exists $c>0$
  such that for any $f\in \cont_{T}$,
  \begin{equation*}
    \|S(f)\|_{\infty,T}\lec{A}  \|f\|_{\infty,T}.
  \end{equation*}
From(\ref{eq_stability_core:2}), we have
  \begin{equation*}
    \|S(f)\|_{\infty,T}\lec{} (\|A\|+\|A\|^{2})e^{T\|A\|}\, \|f\|_{\infty,T}.
  \end{equation*}
  Equation \eqref{eq_stability_core:2} also entails that $S$ is linear and that 
  \begin{equation*}
    S(\H_{T})\subset \H_{T} \text{ and } S(\J_{T})\subset \J_{T},
  \end{equation*}
  hence $\Upsilon_{T}$ is stable by $S$.
The proof is thus complete.
\end{proof}
%
%
\noindent 
Recall (see e.g. Chapter D in \cite{robert_stochastic_2003}), that for any $T>0$ and any $Y\in \mathbb D_T$ such that 
$Y(0)\ge 0$, there exists a unique pair of functions $X_Y$ and $R_Y$ in $\mathbb D_T$ 
such that $X_Y(t)$ is non-negative, $R_Y$ is non-decreasing, $R_Y(0)=0$ and 
for all $t\le T$, 
\[\begin{cases}
X_Y(t) =Y(t)+R_Y(t),\\
\displaystyle\int_0^t X_Y(s)\dif R_Y(t)=0.
\end{cases}\]
Define the mapping 
\begin{align}
    \operatorname{Sko}\, :\, \mathbb D_{T} & \longrightarrow \mathbb D_{T}\label{eq:defSko}\\
    f&\longmapsto \left(s\mapsto f(s)+ \|f^{-}\|_{\infty,s}\right),\notag
  \end{align}
  usually referred to as the {\em Skorokhod reflection map} of $f$. 
Then, it is well known that in the particular case $d=1$, $X_Y$ has the explicit form 
$X_Y=\Sko(Y)$. 
We have the following results,  
\begin{theorem}
\label{thm:Lip}
  The mapping 
  \begin{align*}
    \max\, :\, \mathbb D_{T}&\longrightarrow \mathbb D_{T}\\
    f&\longmapsto (s\mapsto \|f\|_{\infty,s}),
  \end{align*}
the local time map
  \begin{align*}
    \ell^{0}\, :\, \mathbb D_{T} & \longrightarrow \mathbb D_{T}\\
    f&\longmapsto (s\mapsto \|f^{-}\|_{\infty,s})
  \end{align*}
and the Skorohod reflection map $\Sko$  
  are all Lispchitz continuous, and so is 
  for any $\varepsilon>0$ the continuity modulus mapping 
\begin{align*}
  \alpha_{\varepsilon}\, :\, \mathbb D_{T}&\longrightarrow \R^{+}\\
  f&\longmapsto \sup_{|s-s'|\le \varepsilon} \|f(s)-f(s')\|_{\R^d}. 
\end{align*}
\end{theorem}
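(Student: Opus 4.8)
The common engine for all four statements is the elementary observation that the supremum functional is $1$-Lipschitz for the uniform norm: for any index set $\mathcal A$ and any bounded families $(a_\alpha)$, $(b_\alpha)$,
\begin{equation*}
  \Bigl| \sup_{\alpha\in\mathcal A} a_\alpha - \sup_{\alpha\in\mathcal A} b_\alpha \Bigr| \le \sup_{\alpha\in\mathcal A} |a_\alpha - b_\alpha|.
\end{equation*}
I would combine this with the two pointwise $1$-Lipschitz estimates $\bigl|\,\|x\|_{\R^d} - \|y\|_{\R^d}\,\bigr| \le \|x-y\|_{\R^d}$ and $\|x^- - y^-\|_{\R^d} \le \|x-y\|_{\R^d}$, the latter holding componentwise for the negative part.

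First, for the $\max$ map, fix $f,g\in\mathbb D_T$ and $s\in[0,T]$. Applying the supremum inequality over $\mathcal A=[0,s]$ with $a_u=\|f(u)\|_{\R^d}$, $b_u=\|g(u)\|_{\R^d}$, followed by the reverse triangle inequality, gives
\begin{equation*}
  \bigl| \|f\|_{\infty,s} - \|g\|_{\infty,s} \bigr| \le \sup_{u\le s}\bigl|\,\|f(u)\|_{\R^d} - \|g(u)\|_{\R^d}\,\bigr| \le \|f-g\|_{\infty,s} \le \|f-g\|_{\infty,T}.
\end{equation*}
Taking the supremum over $s$ shows that $\max$ is Lipschitz with norm $1$. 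The local time map $\ell^0$ is handled identically, inserting the extra step $\|f^-(u)-g^-(u)\|_{\R^d}\le\|f(u)-g(u)\|_{\R^d}$ before the last bound; its Lipschitz norm is again $1$. For the Skorokhod map I would simply read off from its definition that $\Sko(f)=f+\ell^0(f)$ is the sum of the (norm-$1$) identity and the (norm-$1$) local time map, so by the triangle inequality
\begin{equation*}
  \|\Sko(f) - \Sko(g)\|_{\infty,T} \le \|f-g\|_{\infty,T} + \|\ell^0(f) - \ell^0(g)\|_{\infty,T} \le 2\,\|f-g\|_{\infty,T}.
\end{equation*}

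Finally, for the modulus of continuity $\alpha_\varepsilon$, I would apply the supremum inequality over the set $\{(s,s') : |s-s'|\le\varepsilon\}$ with $a_{(s,s')}=\|f(s)-f(s')\|_{\R^d}$; the only change is that differencing at two points produces a factor $2$:
\begin{equation*}
  \bigl| \|f(s)-f(s')\|_{\R^d} - \|g(s)-g(s')\|_{\R^d} \bigr| \le \|(f-g)(s)\|_{\R^d} + \|(f-g)(s')\|_{\R^d} \le 2\,\|f-g\|_{\infty,T},
\end{equation*}
whence $|\alpha_\varepsilon(f)-\alpha_\varepsilon(g)|\le 2\|f-g\|_{\infty,T}$.

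I do not expect a genuine obstacle here: once the $1$-Lipschitzness of the supremum is isolated, the four estimates are almost mechanical. The only points needing a line of verification rather than a new idea are that each output lies in the target space (the running supremum of an rcll function is again rcll, and $\alpha_\varepsilon$ is plainly finite and real-valued) and that, in the $\R^d$-valued statements, the negative part is read componentwise so that $x\mapsto x^-$ is $1$-Lipschitz in the chosen norm. For $\Sko$ one may recall the explicit reflection identity $X_Y=\Sko(Y)$ valid for $d=1$, but this is not needed for the Lipschitz bound, which follows purely from the defining expression of $\Sko$.
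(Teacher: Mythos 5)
Your proposal is correct and follows essentially the same route as the paper: the paper's proof also reduces the first three maps to the reverse triangle inequality $\bigl|\|f\|_{\infty,t}-\|g\|_{\infty,t}\bigr|\le\|f-g\|_{\infty,t}$ and handles $\alpha_\varepsilon$ by a three-term triangle inequality followed by symmetrization, giving the same constant $2$. Your only additions — isolating the $1$-Lipschitzness of the supremum as an explicit lemma and spelling out that the negative part $x\mapsto x^-$ is $1$-Lipschitz — are details the paper leaves implicit, not a different argument.
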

\begin{proof}
The first three assertions readily follow from the left triangular inequality, entailing 
that for all $f,g$, and all $t\in[0,T]$, 
\begin{equation*}
 \Bigl|\|f\|_{\infty,t}-\|g\|_{\infty,t}\Bigr|\le\|f-g\|_{\infty,t}.
\end{equation*}
Regarding the last assertion, we clearly have for all $s\le T$, 
  \begin{equation*}
    \|f(s)-f(s')\|_{\R^d} \le \|f(s)-g(s)\|_{\R^d}+\|g(s)-g(s')\|_{\R^d}+\|g(s')-f(s')\|_{\R^d}.
  \end{equation*}
Hence for all $\varepsilon$, 
  \begin{equation*}
    \alpha_{\varepsilon}(f)\le 2 \|f-g\|_{\infty,T}+\alpha_{\varepsilon}(g).
  \end{equation*}
  The same holds with the role of $f$ and $g$ permuted, hence
  \begin{equation*}
    \| \alpha_{\varepsilon}(f)- \alpha_{\varepsilon}(g)\|_{\R^d}\le 2 \|f-g\|_{\infty,T},
  \end{equation*}
  and the proof is complete.
\end{proof}
\section{Kantorovitch-Rubinstein distances }
\label{sec:kant-rubinst-dist}

\begin{definition}\label{def_stability_core4:KR}
  For $\mu$ and $\nu$ two probability measures on a metric space $(E,d_{E})$,
  the Kantorovitch-Rubinstein (or Wasserstein-1) distance between $\mu$ and
  $\nu$ is defined as
  \begin{equation*}
    \KR_{E}(\mu,\nu):=\sup_{F\in \Lip_{1}(E\to \R, d_{E})} \int_{E} F\dif \mu -\int_{E} F\dif \nu.
  \end{equation*}
\end{definition}
It is the well known that $(\mu_{n},\, n\ge 1)$ tends to $\nu$ in the
Kantorovitch-Rubinstein topology if and only if $\mu_{n}$ converges in law to
$\nu $ and the sequence of first order moments converges: For some $x_{0}\in E$
and then all $x_{0}\in E$
\begin{equation}\label{eq_stability_core:14}
  \int_{E}d_{E}(x,x_{0})\dif \mu_{n}(x)\xrightarrow{n\to \infty} \int_{E}d_{E}(x,x_{0})\dif \nu(x).
\end{equation}
In \cite{coutin:hal-02098892}, it is proved that
\begin{theorem}
  \label{thm_stability_core:CD_lipschitz}
  Let $(X_{i},\, i\ge 0)$ be a sequence of independent and identically
  distributed random variables belonging to $L^{p}$ for some
  $p\ge 3$. Then,   for any $n\ge 1$,
  \begin{equation}\label{eq_stability_core:borne_Lipschitz}
    \KR_{\cont_{T}}\left( \sum_{i=0}^{n-1}X_{i}h_{i}^{n},\ B \right)\lec{T,\esp{|X_{1}|^{3}}}\ n^{-1/6}\,\log(n).
  \end{equation}
\end{theorem}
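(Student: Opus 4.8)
The plan is to bound $\KR_{\cont_T}\bigl(\sum_{i=0}^{n-1}X_ih_i^n,\,B\bigr)$ by a Stein--Malliavin argument on Wiener space, splitting the cost into a finite-dimensional Lindeberg swap and a path-regularization error. Write $W_n=\sum_{i=0}^{n-1}X_ih_i^n$ and recall that the $X_i$ are centred with unit variance. The family $(h_i^n)_{i=0}^{n-1}$ is orthonormal in $\H_T$, so that with $(Z_i)$ i.i.d.\ standard Gaussian, $\Xi_nB=\sum_{i=0}^{n-1}Z_ih_i^n$ is exactly the affine interpolation of a Brownian motion along the uniform partition. By the triangle inequality for $\KR_{\cont_T}$ it suffices to bound $\KR_{\cont_T}(W_n,\Xi_nB)$ and $\KR_{\cont_T}(\Xi_nB,B)$ separately. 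The second term is controlled by the coupling bound $\KR_{\cont_T}(\Xi_nB,B)\le\esp{\|\Xi_nB-B\|_{\infty,T}}$, which by \eqref{eq_stability_core:5} (the supremum norm being dominated by a fractional Sobolev norm) is of smaller order than the target rate; the whole difficulty lies in the first term, where $W_n$ and $\Xi_nB$ live in the same $n$-dimensional subspace and differ only through the laws of their coordinates.

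For the main term, fix $F\in\Lip_1(\cont_T\to\R,\|\cdot\|_{\infty,T})$. Since $F$ is merely Lipschitz, I would first smooth it with the Ornstein--Uhlenbeck semigroup $(P_u)$ on $\cont_T$, whose invariant law is the Wiener measure $\mu$. The Mehler formula $P_uF(x)=\esp{F(e^{-u}x+\sqrt{1-e^{-2u}}\,B')}$ shows, via integration by parts against the Gaussian, that $P_uF\in\Upsilon_T$, with the constant $M_u$ in \eqref{eq_preliminaries:5} of order $u^{-1}$, while the approximation error obeys $|\esp{F(W_n)}-\esp{P_uF(W_n)}|\lec{}\sqrt u$ and likewise under $\mu$, using that $\E\|W_n\|_{\infty,T}$ and $\E\|B\|_{\infty,T}$ are bounded. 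Thus the problem reduces to estimating $\esp{G(W_n)}-\esp{G(\Xi_nB)}$ for $G=P_uF\in\Upsilon_T$, with $M_u$ as the only feature of $G$ that enters.

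To treat $G\in\Upsilon_T$ I would swap the coordinates of $W_n$ for Gaussian ones one at a time (a Lindeberg telescoping), Taylor-expanding $G$ to second order in each direction $h_i^n$ and using \eqref{eq_preliminaries:5} to control the remainder. The constant and first-order terms cancel because $X_i$ is centred, and the second-order terms match because $\esp{X_i^2}=1=\esp{Z_i^2}$; what survives at each swap is, by the $\H_T$-Lipschitz continuity of $\nabla^{(2)}G$ measured in the $L^2$ norm, a remainder bounded by $M_u\,\esp{|X_1|^3}\,\|h_i^n\|_{\infty,T}\,\|h_i^n\|_{L^2}^2$ --- this mixed product of one supremum norm and two $L^2$ norms being exactly the combination dictated by \eqref{eq_preliminaries:5}, which is why the regularity is imposed relative to $L^2$ rather than $\H_T$. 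A direct computation gives $\|h_i^n\|_{\infty,T}=(T/n)^{1/2}$ and $\sum_{i=0}^{n-1}\|h_i^n\|_{L^2}^2\lec{T}1$, so the total remainder is of order $M_u\,\esp{|X_1|^3}\,n^{-1/2}\sim u^{-1}n^{-1/2}$. Combining this with the regularization error gives, for every $u>0$, a bound $\lec{T,\esp{|X_1|^3}}\sqrt u+u^{-1}n^{-1/2}$, which is optimized at $u=n^{-1/3}$ and yields the rate $n^{-1/6}$.

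The delicate point --- and what I expect to be the main obstacle --- is the passage between the Hilbertian $\H_T$/$L^2$ norms in which the Malliavin computation is naturally carried out and the supremum norm $\|\cdot\|_{\infty,T}$ that defines $\KR_{\cont_T}$ and against which $F$ is Lipschitz. Converting the Lipschitz bound on $F$ into the $L^2$-based estimates required by \eqref{eq_preliminaries:5}, and in particular controlling $M_u$ sharply, costs logarithmic factors reflecting the $\sqrt{\log n}$ growth of the maximum of $n$ Gaussian coordinates; these survive the optimization in $u$ and account for the factor $\log(n)$ in \eqref{eq_stability_core:borne_Lipschitz}. The rigorous justification of the integration-by-parts and the finite-dimensional Taylor expansion on Wiener space, carried out in the class $\Upsilon_T$ and its iterated gradients, is the heart of the matter; the remaining steps are bookkeeping of moments and of the scaling of the $h_i^n$.
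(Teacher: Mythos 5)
Your architecture (Ornstein--Uhlenbeck smoothing, then a Lindeberg swap along the directions $h_i^n$, then optimization in $u$) is genuinely different from the paper's proof, and your tradeoff $\sqrt{u}+u^{-1}n^{-1/2}$ does reproduce the exponent $-1/6$; but the step on which everything rests is false. You claim that for $F\in\Lip_{1}(\cont_T\to\R,\|\cdot\|_{\infty,T})$ the smoothed function $P_uF$ lies in $\Upsilon_{T}$ with constant $M_u=O(u^{-1})$ in \eqref{eq_preliminaries:5}, i.e.\ that increments of $\nabla^{(2)}P_uF$ are controlled by $\|g\|_{\cont_T}\,\|h\|_{L^{2}}\|k\|_{L^{2}}$. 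This fails for \emph{every} finite constant, not merely up to logarithms. Take $F(x)=|x(T)|$, which is $1$-Lipschitz for the sup-norm. Then $P_uF(x)=\phi_u(x(T))$ for a smooth real function $\phi_u$ whose second derivative is a (non-constant) Gaussian bump, and for all continuous directions $h,k$ and all $g\in\H_T$,
\begin{equation*}
  \<\nabla^{(2)}P_uF(x)-\nabla^{(2)}P_uF(x+g),\, h\otimes k\>
  =\Bigl(\phi_u''\bigl(e^{-u}x(T)\bigr)-\phi_u''\bigl(e^{-u}x(T)+e^{-u}g(T)\bigr)\Bigr)\,e^{-2u}\,h(T)\,k(T).
\end{equation*}
Since the evaluation $h\mapsto h(T)$ is not continuous for the $L^{2}([0,T],\dif s)$ norm (take $h=h_{n-1}^{n}$: then $h(T)=\sqrt{T/n}$ while $\|h\|_{L^{2}}\asymp T/n$, so $h(T)/\|h\|_{L^{2}}\to\infty$), no $M_u$ can make \eqref{eq_preliminaries:5} hold: $P_uF\notin\Upsilon_{T}$, and no choice of smoothing can repair this, because any convolution-type regularization of this $F$ still produces functionals of $x(T)$ alone, whose second derivatives are multiples of the rank-one tensor $h\otimes k\mapsto h(T)k(T)$. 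What Gaussian integration by parts actually gives is the weaker estimate $M_u\|g\|_{\cont_T}\|h\|_{\H_T}\|k\|_{\H_T}$ with $M_u\asymp u^{-1}$; but the $h_i^n$ are \emph{orthonormal} in $\H_T$, so with that bound your per-swap remainder is of order $u^{-1}\esp{|X_1|^{3}}\,\|h_i^n\|_{\infty,T}\asymp u^{-1}n^{-1/2}$, and the sum over the $n$ swaps is $u^{-1}n^{1/2}$, which diverges. The smallness of $\|h_i^n\|_{L^{2}}$, on which your bookkeeping crucially relies, is exactly what is unavailable for a generic Lipschitz $F$; your final paragraph identifies this passage as the delicate point but miscasts a fatal obstruction as a source of logarithmic factors.

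This is precisely why the paper does not argue at scale $n$ directly. Its proof interposes a coarser partition with $N<n$ intervals and splits the distance into $A_1+A_2+A_3$: the outer terms are path-regularity estimates of order $N^{-1/2}$, and the middle term is handled by the finite-rank Stein estimate \eqref{eq_stability_core:borne_rangfini} of Theorem~\ref{thm_stability_core:Stein_finiterank}, whose constant carries the factor $|\pi|^{-1}\asymp N$. That dimension factor is exactly the legitimate form of what you tried to do: on the finite-rank subspace $\H_{\pi}$ of piecewise-linear paths one \emph{does} have $\|h\|_{\infty,T}\lec{}|\pi|^{-1/2}\|h\|_{L^{2}}$, so sup-norm Lipschitz data can be converted into $L^{2}$-type second-derivative bounds at the price of $|\pi|^{-1}$. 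Optimizing $N^{-1/2}+Nn^{-1/2}\log(n)$ at $N\asymp n^{1/3}$ gives $n^{-1/6}\log(n)$; your optimization in $u$ is the exact analogue under $u\leftrightarrow 1/N$, so the intuition about where $-1/6$ comes from is right, but the finite-rank projection (and the price $|\pi|^{-1}$ it exacts) is the missing idea without which the middle term of your argument cannot be justified. (A minor point: you correctly add the hypothesis that the $X_i$ are centered with unit variance, which the statement leaves implicit and which is indeed needed.)
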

To explain the somehow surprising exponent $-1/6$, we quickly describe
the proof of this result. We proceed to a sort of bias-variance decomposition:
for any $N<n$
\begin{align*}
  \KR_{\cont_T}\left( \sum_{i=0}^{n-1}X_{i}h_{i}^{n},\ B \right)&\le \KR_{\cont_T}\left( \sum_{i=0}^{n-1}X_{i}h_{i}^{n},\ \Xi_{N}(\sum_{i=0}^{n-1}X_{i}h_{i}^{n}) \right)\\
  &+ \KR_{\cont_T}\left( \Xi_{N}(\sum_{i=0}^{n-1}X_{i}h_{i}^{n}),\ \Xi_{N}B \right)\\&+ \KR_{\cont_T}\left( \Xi_{N}B,\ B \right)\\&=A_{1}+A_{2}+A_{3}.
\end{align*}
We have seen in \eqref{eq_stability_core:5} that the rightmost term  is bounded
by $n^{-1/2}$. The proof of the latter inequality is based on the scaling invariance and Hölder
continuity of the Brownian motion, hence there are strong reasons to believe
that this rate is optimal. Direct computations show that $A_{1}$ is also bounded
by $n^{-1/2}$. 

It remains to bound the median term~$A_{2}$. For this, we remark that the two processes
under study belong to the finite dimensional space
\begin{equation*}
  \H_{N}=\Vect\left\{h_{i}^{N},i \in \llbracket 0,N-1\rrbracket\right\}.
\end{equation*}
This leads to the definition of finite rank functional,
\begin{definition}
  For $\pi\in \Sigma_{T}$, let
  \begin{equation*}
    \H_{\pi}=\Vect\left\{ h_{i}^{\pi},\, i=0,\cdots,l(\pi)-1 \right\}.
  \end{equation*}
  A function $F\, :\, \cont_T\to \R$ is then said to have a finite rank if there
  exists a partition $\pi\in \Sigma_{T}$ and a function 
  $\varphi_{F}\, :\,
  \H_{\pi}\to \R$ such that
  \begin{equation}\label{eq_stability_core:6}
    F=\varphi_{F}\circ \Xi_{\pi}.
  \end{equation}
  It amounts to saying that $F$ depends only on the value of $x$ by its value at
  the points of $\pi$. For any $\pi\in \Sigma_{T}$, we denote by $\F^{\pi}$ the set of functions of finite rank associated to 
  $\pi$, that is, such that \eqref{eq_stability_core:6} holds.
\end{definition}
\noindent A straightforward adaptation of the proof of the main theorem in
\cite{coutin:hal-02098892} yields
\begin{theorem}
  \label{thm_stability_core:Stein_finiterank}
  For any $\pi\in \Sigma_{T}$, set 
  \begin{equation*}
    \Lip^{\pi}_{1}=\Lip_{1}(\cont_T\to \R, \|\ \|_{\infty,T})\cap \F^{\pi}.
  \end{equation*}
  Then, we have for all $n\in\N^*$, 
  \begin{multline}\label{eq_stability_core:borne_rangfini}
    \sup_{F\in \Lip^{\pi}_{1}} \left\{\esp{F\left(  \Xi_{\pi}\left(\sum_{i=0}^{n-1}X_{i}h_{i}^{n}\right) \right)}-\esp{ F\left( \Xi_{\pi}B \right)}\right\}\\\lec{T,\esp{|X_{1}|^{3}}}  \frac{|\pi|^{-1}}{\sqrt{n}}\, \log(n).
  \end{multline}
\end{theorem}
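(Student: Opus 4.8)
The plan is to read \eqref{eq_stability_core:borne_rangfini} as a finite-dimensional Gaussian comparison on $\H_{\pi}$ and to transplant the Stein/semigroup argument of \cite{coutin:hal-02098892} to that setting, while tracking how the rank of $\pi$ enters. Since $\Xi_{\pi}$ is idempotent and $F=\varphi_{F}\circ\Xi_{\pi}$, we have $F(\Xi_{\pi}x)=F(x)$, so the left-hand side equals $\esp{\varphi_{F}(\Xi_{\pi}W_{n})}-\esp{\varphi_{F}(\Xi_{\pi}B)}$ with $W_{n}=\sum_{i=0}^{n-1}X_{i}h_{i}^{n}$. The $(h_{j}^{\pi})_{j}$ form an orthonormal basis of $\H_{\pi}$ for $\langle\cdot,\cdot\rangle_{\H_{T}}$, and on $\H_{T}$ the map $\Xi_{\pi}$ coincides with the $\H_{T}$-orthogonal projection onto $\H_{\pi}$; hence $\Xi_{\pi}W_{n}=\sum_{i}X_{i}\,c_{i}$ with $c_{i}:=\Xi_{\pi}h_{i}^{n}$, while $\Xi_{\pi}B=\sum_{j}\xi_{j}h_{j}^{\pi}$ with $(\xi_{j})_{j}$ i.i.d. standard Gaussian. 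Assuming, as in the application where $\pi$ is uniform, that the nodes of $\pi$ lie among the $iT/n$, a direct computation of the overlaps $\langle h_{i}^{n},h_{j}^{\pi}\rangle_{\H_{T}}$ gives $\sum_{i}c_{i}\otimes c_{i}=\Id_{\H_{\pi}}$: the two random vectors of $\H_{\pi}$ share the same mean and covariance, which is the discrete Donsker normalisation and will annihilate the first two orders of the expansion below.

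Next I would introduce the Ornstein--Uhlenbeck semigroup $(P_{t})_{t\ge0}$ with invariant measure $\mu$, observe that $P_{t}F$ is again of rank $\pi$, and form the Stein solution $U_{F}=\int_{0}^{\infty}(\esp{F(B)}-P_{t}F)\dif t$, so that the difference to estimate becomes $\esp{\delta\nabla U_{F}(W_{n})}$. Since $U_{F}$ has rank $\pi$ its gradient is supported on $\H_{\pi}$, and the whole problem lives on the $l(\pi)$-dimensional space $\H_{\pi}$. Exploiting that $W_{n}=\sum_{i}X_{i}c_{i}$ is a sum of independent contributions, I would expand $\esp{\delta\nabla U_{F}(W_{n})}$ to third order (by a Lindeberg telescoping of the blocks $X_{i}c_{i}$, or equivalently through the Mehler formula). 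Orders one and two vanish by the moment matching $\esp{X_{i}}=0$, $\esp{X_{i}^{2}}=1$ and $\sum_{i}c_{i}\otimes c_{i}=\Id_{\H_{\pi}}$, leaving a third-order remainder proportional to $\esp{|X_{1}|^{3}}$.

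The heart of the matter is to bound that remainder, and this is where the rank of $\pi$ materialises. Differentiating the Mehler representation of $P_{t}$ and integrating by parts twice converts two of the three directions into Wiener integrals $\delta(\cdot)$ while the third stays paired with $\nabla F$; using that $F$ is $1$-Lipschitz for $\|\cdot\|_{\infty,T}$ and that $\|\delta(h)\|_{L^{2}(\mu)}=\|h\|_{\H_{T}}$, the block-$i$ term at time $t$ is dominated by $\frac{e^{-3t}}{1-e^{-2t}}\,\|c_{i}\|_{\infty,T}\,\|c_{i}\|_{\H_{T}}^{2}\,\esp{|X_{1}|^{3}}$. A short computation gives $\|c_{i}\|_{\infty,T}=\sqrt{T/n}$ and $\|c_{i}\|_{\H_{T}}^{2}=(T/n)/(t_{j+1}-t_{j})$ for the cell $[t_{j},t_{j+1})$ of $\pi$ containing the $i$-th fine cell, whence the interval lengths telescope upon summation:
\[
  \sum_{i}\|c_{i}\|_{\infty,T}\,\|c_{i}\|_{\H_{T}}^{2}=l(\pi)\sqrt{T/n}\lec{T}|\pi|^{-1}n^{-1/2},
\]
the last step using $l(\pi)\asymp|\pi|^{-1}$ for the regular partitions at hand. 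Finally I would split $\int_{0}^{\infty}\dif t$ at a cut-off $t_{0}$: on $[t_{0},\infty)$ the factor $\frac{e^{-3t}}{1-e^{-2t}}$ integrates to $\lec{}\log(1/t_{0})$ times the above sum, while on $[0,t_{0}]$ a cruder second-order estimate, resting only on the $\H_{T}$-boundedness of $\nabla^{(2)}U_{F}$ furnished by the $\Upsilon_{T}$-regularity of \cite{coutin:hal-02098892}, is rendered negligible by the choice $t_{0}\sim1/n$; this delivers the announced $|\pi|^{-1}n^{-1/2}\log n$.

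The main obstacle is precisely this last balancing. It is essential that the two contracted directions be measured in $\|\cdot\|_{\H_{T}}$ rather than in $\|\cdot\|_{L^{2}}$: the $L^{2}$-norms $\|c_{i}\|_{L^{2}}\asymp T/\sqrt{n}$ are $\pi$-independent and would spuriously yield a rate $n^{-1/2}$, contradicting the genuine $n^{-1/6}$ exponent of Theorem~\ref{thm_stability_core:CD_lipschitz} obtained after optimising $\pi$ against the interpolation error \eqref{eq_stability_core:5}. One must therefore verify that the small-time part of the semigroup is absorbed without reintroducing a power of $|\pi|$, so that only the logarithmic cost of the cut-off survives.
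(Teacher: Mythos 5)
Your proposal is essentially the paper's own route: the paper offers no proof beyond asserting that the statement follows by ``a straightforward adaptation of the proof of the main theorem in \cite{coutin:hal-02098892}'', and your reconstruction --- reduction to the $l(\pi)$-dimensional space $\H_{\pi}$, Ornstein--Uhlenbeck/Stein equation, Lindeberg telescoping with third-order remainder, the count $\sum_i\|c_i\|_{\infty,T}\|c_i\|_{\H_T}^2=l(\pi)\sqrt{T/n}$ producing the dimension-linear factor, and a time cutoff $t_0\sim 1/n$ producing the $\log n$ --- is precisely that adaptation, consistent with the paper's remark that the constant grows linearly in the dimension as in \cite{Raic2018,MR4168389}. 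The one caveat is that your moment-matching step silently requires the nodes of $\pi$ to lie on the fine grid $\{iT/n\}$ and $l(\pi)\asymp|\pi|^{-1}$, restrictions absent from the statement (which allows arbitrary $\pi\in\Sigma_T$); this limits the scope of your argument, but it covers the only case the paper actually uses (nested, uniform partitions in the bias--variance decomposition of Theorem~\ref{thm_stability_core:CD_lipschitz}), and the paper's citation-proof glosses over the same point.
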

If we apply this theorem to $\Xi_{N}$, we are in a position similar (but not equivalent) to that of bounding the Wasserstein-1
distance between a sum of independent (but not identically distributed) random
vectors in dimension~$N$, and the standard Gaussian distribution in $\R^{N}$. 
In this finite dimensional situation, the best known
results \cite{Raic2018,MR4168389} show that the multiplying constant of the factor
$n^{-1/2}\log(n)$ depends linearly on the dimension, a fact that we retrieve here.

The exponent $-1/6$ is then obtained by choosing the optimal $N$ as a function
of $n$. We have strong
confidence that each of  the two steps gives the optimal rate, and consequently, that this overall rate is optimal.
For many practical applications, test functions in $ \Lip^{\pi}_{1}$ are sufficient, see e.g. 
the simple and practical functionals addressed in Section \ref{sec:lipsch-funct}. 
This leads us to introduce the following distance, 
\begin{equation*}
  \FrF_{\cont_T}^{\pi}(\mu,\nu)=\sup_{F\in \Lip^{\pi}_{1}} \left\{\int_{\cont_T}F\dif \mu-\int_{\cont_T}F\dif \nu\right\},
\end{equation*}
for $\mu,\nu$ two probability measures.

  If we allow to take the supremum over  a smaller set of test functions like three times Fréchet
  differentiable with bounded derivatives (as in
  \cite{BarbourSteinmethoddiffusion1990,kasprzak_diffusion_2017}), we get a
  convergence rate bounded by $n^{-1/2}$. However, we can get this rate even for much
  less regular test functions. Let
  \begin{equation*}
    \Lip_{1}^{\Upsilon}=\Lip_{1}(\cont_{T}\to \R, \|\ \|_{\infty,T})\cap \Upsilon_{T}.
  \end{equation*}
Then, setting  for all $\mu,\nu$, 
  \begin{equation*}
    \JJ_{\cont_{T}}(\mu,\nu)=\sup_{F\in \Lip^{\Upsilon}_{1}} \left\{\int_{\cont_T}F\dif \mu-\int_{\cont_T}F\dif \nu\right\}, 
  \end{equation*}
 it is shown in \cite{Coutin:2019aa} that  
  \begin{equation*}
    \JJ_{\cont_{T}}\left( \Xi_{\pi}\left(\sum_{i=0}^{n-1}X_{i}h_{i}^{n}\right),\ B \right)\lec{T,\pi} \frac{1}{\sqrt{n}}\cdotp
  \end{equation*}
In what follows, we deal with renormalized stochastic integrals with respect to Poisson
measures as in \eqref{eq_stability_core:3}. One of the trick of our proof is to
replace the random integrands by  deterministic and supposedly close functions.
This technical step introduces an error which converges to zero at rate
$n^{-1/4}$ (see \eqref{eq:bofbof}), so much slower than $n^{-1/2}\log(n)$. We thus  redefine the
usual rates at which the convergence hold.
  \begin{definition}
  A sequence of rcll processes $(X_{n},\, n\ge 1)$ is said to converge in
  distribution  \textsl{
    at the usual rates } to a process $Z$ if for any $\pi\in \Sigma_{T}$ and
  any~$n\ge 1$, 
  \begin{equation}
    \label{eq_stability_core:9}
    \tag{\dag}
    \begin{cases}
      \KR_{\cont_{T}}(\Xi_{n}X_{n},\, Z)&\lec{T} n^{-1/6}\log(n),\\
      \FrF_{\cont_{T}}^{\pi}(\Xi_{n}X_{n},\, Z)& \lec{T}|\pi|^{-1}n^{-1/4},\\
      \JJ_{\cont_{T}}(\Xi_{n}X_{n},\, Z)&\lec{T} n^{-1/4}.
    \end{cases}
  \end{equation}
\end{definition}
\noindent A straightforward application of the previous results leads to the following, 
%
%

\begin{theorem}
  \label{thm_stability_core:convergence_Poisson}
  Let $P_{n}$ be a Poisson process on $[0,T]$ of intensity $n$. Then, the
  sequence of  processes $\left(\overline P_{n},\, n\ge 1\right)$ defined by 
  \begin{equation}\label{eq_stability_core:7}
    \overline P_{n}(t):=\frac{P_{n}(t)-nt}{\sqrt{n}}
  \end{equation}
converges at the usual rates to a Brownian motion. Furthermore,
  \begin{equation*}
    \esp{\|\overline P_{n}-\Xi_{n}\overline P_{n}\|_{\infty,T}}\lec{T} \frac{\log(n)}{\sqrt{n}}\cdotp
    \end{equation*}
\end{theorem}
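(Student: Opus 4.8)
The plan is to reduce both claims to results already in hand, the key observation being that the affine interpolation $\Xi_n\overline{P}_n$ is \emph{exactly} a normalized i.i.d.\ sum against the basis $(h_i^n)$. Writing $t_i = iT/n$, set $N_i = P_n(t_{i+1}) - P_n(t_i)$ for $i\in\llbracket 0,n-1\rrbracket$; by stationarity and independence of Poisson increments these are i.i.d.\ with law $\mathrm{Poisson}(T)$. Put $X_i = (N_i - T)/\sqrt{T}$, so that the $X_i$ are i.i.d., centered, of unit variance, and have finite moments of every order, with $\esp{|X_1|^3}$ depending on $T$ only. Since $\overline{P}_n(t_j) = n^{-1/2}\sum_{i<j}(N_i - T)$ and $h_i^n(t_j) = \sqrt{T/n}\,\car_{\{i<j\}}$, the two functions $\Xi_n\overline{P}_n$ and $\sum_{i=0}^{n-1}X_i h_i^n$ agree at every node $t_j$; being both piecewise affine along the uniform partition, they coincide, so $\Xi_n\overline{P}_n = \sum_{i=0}^{n-1}X_i h_i^n$.

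With this identity the three estimates defining convergence at the usual rates in \eqref{eq_stability_core:9} are immediate. The $\KR$ bound is Theorem~\ref{thm_stability_core:CD_lipschitz} applied to $(X_i)$, whose constant depends only on $T$ and $\esp{|X_1|^3}$, hence only on $T$, producing $n^{-1/6}\log n$. For the $\FrF^\pi$ bound I would use that any finite-rank $F\in\Lip^\pi_1$ depends on its argument only through its values on $\pi$, so that $F = F\circ\Xi_\pi$; this makes $\FrF^\pi_{\cont_T}(\Xi_n\overline{P}_n, B)$ coincide with the quantity bounded in Theorem~\ref{thm_stability_core:Stein_finiterank}, namely $\lec{T}|\pi|^{-1}n^{-1/2}\log n$, which is a fortiori $\lec{T}|\pi|^{-1}n^{-1/4}$. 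The $\JJ$ bound is obtained in the same way from the estimate of \cite{Coutin:2019aa} recalled just above the definition of the usual rates, applied with $\pi$ the uniform partition (for which $\Xi_\pi\sum_i X_ih_i^n = \Xi_n\overline P_n$).

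It remains to control the interpolation error, and here I would first strip off the drift: with $\ell(t) = nt$ one has $\overline{P}_n = n^{-1/2}(P_n - \ell)$, and since $\Xi_n$ is linear and reproduces affine functions we have $\Xi_n\ell = \ell$, whence $\overline{P}_n - \Xi_n\overline{P}_n = n^{-1/2}(P_n - \Xi_n P_n)$. Now $P_n$ is a point process of the form \eqref{eq_stability_core:3} with $m = d = 1$, $\zeta_1 = 1$ and constant intensity function, so Theorem~\ref{thm_mycore:bound} yields $\esp{\|P_n - \Xi_n P_n\|_{\infty,T}}\lec{}\Psi\bigl(n, \rho_1|\pi_n|\,\|\varphi_1\|_\infty\bigr) = \Psi(n, T)$. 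The only genuine computation, and the step I expect to carry the real work, is then the elementary estimate $\Psi(n, T)\lec{T}\log n$: the numerator $\log(ne^{T/n})$ is of order $\log n$, while for $n$ large the denominator $\log\bigl(nT^{-1}\log(ne^{T/n})\bigr)$ exceeds $1$, so the ratio is at most a $T$-dependent multiple of $\log n$. Dividing by $\sqrt n$ gives $\esp{\|\overline{P}_n - \Xi_n\overline{P}_n\|_{\infty,T}}\lec{T}n^{-1/2}\log n$, completing the proof.
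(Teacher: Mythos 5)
Your proposal is correct and takes essentially the same route as the paper: identify the interpolation $\Xi_{n}\overline P_{n}$ with an i.i.d.\ sum $\sum_{i}X_{i}h_{i}^{n}$ (the paper does this in distribution with Poisson$(1)$ increments at $T=1$, you do it pathwise for general $T$ with the correct variance normalization), then invoke Theorem~\ref{thm_stability_core:CD_lipschitz}, Theorem~\ref{thm_stability_core:Stein_finiterank} and the $\JJ$ estimate for the usual rates, and Theorem~\ref{thm_mycore:bound} for the interpolation error. Your explicit removal of the drift $t\mapsto nt$ before applying Theorem~\ref{thm_mycore:bound} (needed since $\overline P_{n}$ itself is not of the form \eqref{eq_stability_core:3}), and the elementary bound $\Psi(n,T)\lec{T}\log n$, merely make rigorous steps that the paper's proof leaves implicit.
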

    \begin{proof} Fix $n\ge 1$. 
      According to Theorem~\ref{thm_mycore:bound}, we get that 
      \begin{equation*}
        \esp{\|\overline P_{n}-\Xi_{n}\overline P_{n}\|_{\infty,T}}\lec{T} \frac{\Psi(n,1)}{\sqrt{n}}\lec{T} \frac{\log(n)}{\sqrt{n}}\cdotp
      \end{equation*}
      Furthermore,
      \begin{align*}
        \Xi_{n}\overline P_{n}&=\sum_{i=0}^{n-1} \frac{P_{n}((i+1)/n)-P_{n}(i/n)- 1}{\sqrt{n}\sqrt{1/n}}\, h_{i}^{n}\\
        &\stackrel{\text{dist.}}{=} \sum_{i=0}^{n-1} (X_{i}-1)\,  h_{i}^{n}
        \end{align*}
        where $(X_{k},\, k\ge 1)$ is an IID sequence of Poisson random variables
        of parameter~$1$. The result then follows from \eqref{eq_stability_core:borne_Lipschitz} and \eqref{eq_stability_core:borne_rangfini}.
    \end{proof}
We are now in a position to state our main result. It consists of an 
extension of the last Theorem to stochastic integral with respect to Poisson
measures, 
\begin{theorem}
  \label{cor:convergence_integral_mesures_poisson}
  Let $r$ be a positive integrable function on $[0,T]$ and $\gamma$ defined
  by \eqref{eq_stability_core:def_changement_temps}. Let $\Pois^{n}$ be a Poisson
  measure on $[0,T]\times \R^{+}$ of intensity measure $n\, \dif t\otimes \dif  z$. 
  Consider the compensated Poisson measure
  \begin{equation*}
    \dif  \Poist^{n}(t,z)=\dif \Pois^{n}(t,z)-  n\, \dif t\otimes \dif z.
  \end{equation*}
Define also the process $\overline R_{n}$ by
  \begin{equation*}
    \overline R_{n}(t)=\frac{1}{\sqrt{n}}\,\int_{0}^{t}\int_{\R^{+}}\car_{\{z\le r\left(s\right)\}}  \dif \Poist^{n}(s,z),\, t \in [0,T]. 
  \end{equation*}
  Then, $(\overline R_{n},\, n\ge 1)$ converges at the usual rates to $B\circ \gamma$. 
\end{theorem}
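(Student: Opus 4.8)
The plan is to realize $\overline R_n$ as a deterministic time change of the renormalized homogeneous Poisson process of Theorem~\ref{thm_stability_core:convergence_Poisson}, and then to transport the three estimates \eqref{eq_stability_core:9} through the time-change map of Lemma~\ref{thm_mycore:timeChange}. First I would set $N_n(t)=\int_0^t\int_{\R^+}\car_{\{z\le r(s)\}}\dif\Pois^n(s,z)$, so that $\overline R_n=(N_n-n\gamma)/\sqrt n$. Since $r$ is deterministic, $N_n$ is an inhomogeneous Poisson process with cumulative intensity $n\gamma(t)$; by the classical time-change representation there is a homogeneous Poisson process $P_n$ of intensity $n$ on $[0,\gamma(T)]$ with $N_n=P_n\circ\gamma$. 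Writing $\overline P_n(u)=(P_n(u)-nu)/\sqrt n$, this gives $\overline R_n=\overline P_n\circ\gamma=\Gamma(\overline P_n)$, where $\Gamma\colon g\mapsto g\circ\gamma$ is the map of Lemma~\ref{thm_mycore:timeChange} and $\overline P_n$ is exactly the process of Theorem~\ref{thm_stability_core:convergence_Poisson} on $[0,\gamma(T)]$; the target is likewise $B\circ\gamma=\Gamma(B)$.

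Because $\gamma$ is a strictly increasing bijection of $[0,T]$ onto $[0,\gamma(T)]$, $\Gamma$ is a sup-norm isometry, and by Lemma~\ref{thm_mycore:timeChange} it preserves the class $\Upsilon$. Moreover, if $F=\varphi_F\circ\Xi_\pi$ is of finite rank along $\pi=\{t_i\}$, then $F\circ\Gamma(g)=\varphi_F(\Xi_\pi(g\circ\gamma))$ depends on $g$ only through its values at $\gamma(\pi)=\{\gamma(t_i)\}$, hence $F\circ\Gamma$ is of finite rank along $\gamma(\pi)$. Thus $F\mapsto F\circ\Gamma$ sends $\Lip_1$, $\Lip_1^{\Upsilon}$ and $\Lip_1^{\pi}$ on $\cont_T$ into, respectively, $\Lip_1$, $\Lip_1^{\Upsilon}$ and $\Lip_1^{\gamma(\pi)}$ on $\cont_{\gamma(T)}$, which gives the contractions $\KR_{\cont_T}(\Gamma\overline P_n,\Gamma B)\le\KR_{\cont_{\gamma(T)}}(\overline P_n,B)$, $\JJ_{\cont_T}(\Gamma\overline P_n,\Gamma B)\le\JJ_{\cont_{\gamma(T)}}(\overline P_n,B)$ and $\FrF^{\pi}_{\cont_T}(\Gamma\overline P_n,\Gamma B)\le\FrF^{\gamma(\pi)}_{\cont_{\gamma(T)}}(\overline P_n,B)$. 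Since $\overline R_n=\Gamma(\overline P_n)$ and $B\circ\gamma=\Gamma(B)$, this reduces the three distances between $\overline R_n$ and $B\circ\gamma$ to the corresponding distances between $\overline P_n$ and $B$ on $[0,\gamma(T)]$, which Theorem~\ref{thm_stability_core:convergence_Poisson} controls at the usual rates once the interpolations are put in.

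The one genuine gap is that \eqref{eq_stability_core:9} is phrased through the uniform interpolation $\Xi_n(\cdot)$, while $\Xi_n$ does not commute with $\Gamma$, so $\Xi_n\overline R_n\ne\Gamma(\Xi_n\overline P_n)$. I would close it by triangle inequalities through the non-interpolated laws: as every admissible test function is $1$-Lipschitz, each of the three distances between $\Xi_n\overline R_n$ and $\overline R_n$, resp.\ between $\overline P_n$ and $\Xi_n\overline P_n$, is at most $\esp{\|\overline R_n-\Xi_n\overline R_n\|_{\infty,T}}$, resp.\ $\esp{\|\overline P_n-\Xi_n\overline P_n\|_{\infty,T}}$. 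The latter is $\lec{T}\log(n)/\sqrt n$ by Theorem~\ref{thm_stability_core:convergence_Poisson}. For the former I would split $\overline R_n-\Xi_n\overline R_n=\tfrac1{\sqrt n}(N_n-\Xi_nN_n)-\sqrt n\,(\gamma-\Xi_n\gamma)$, bound the jump part by $\tfrac1{\sqrt n}\esp{\|N_n-\Xi_nN_n\|_{\infty,T}}\lec{T}\Psi(n,T\|r\|_\infty)/\sqrt n\lec{T}\log(n)/\sqrt n$ via Theorem~\ref{thm_mycore:bound} (applied to $N_n$ with $\varphi_1=r$, $\zeta_1=1$, $\rho_1=n$), and bound the drift part by $\sqrt n\,\|\gamma-\Xi_n\gamma\|_{\infty,T}\lec{T}n^{-1/2}$, since $\gamma$ is Lipschitz so affine interpolation on the uniform mesh costs $O(n^{-1})$. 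All these corrections are $\lec{T}\log(n)/\sqrt n$, of strictly smaller order than each target rate $n^{-1/6}\log n$, $|\pi|^{-1}n^{-1/4}\ge T^{-1}n^{-1/4}$ and $n^{-1/4}$, so adding them preserves \eqref{eq_stability_core:9}.

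I expect the main difficulty to be exactly this interplay between interpolation and time change, together with the bookkeeping of the mesh in the finite-rank bound: the partition transforms as $\pi\mapsto\gamma(\pi)$, and converting the resulting $|\gamma(\pi)|^{-1}$ back into the announced $|\pi|^{-1}$ requires $\gamma$ to be bi-Lipschitz, i.e.\ $r$ bounded above and below away from $0$ on $[0,T]$ (which holds in the intended applications, and is also what makes the jump bound above legitimate). Granting this, $|\gamma(\pi)|\asymp|\pi|$ and the finite-rank estimate transfers with the stated prefactor. The remaining verifications—that $B\circ\gamma$ carries the covariance $\gamma(s\wedge t)$ matching the limit of the martingale $\overline R_n$, and that the uniform-mesh interpolation error is genuinely of order $\log(n)/\sqrt n$—are routine.
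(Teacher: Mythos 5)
Your proposal is correct and takes essentially the same route as the paper: the paper's own proof is exactly your first two paragraphs, namely Br\'emaud's time-change representation (so that $\overline R_n\circ\gamma^{-1}$ has the law of $\overline P_n$), then Theorem~\ref{thm_stability_core:convergence_Poisson}, then the transfer through $\Gamma$ via Lemma~\ref{thm_mycore:timeChange}. The non-commutation $\Xi_n\circ\Gamma\neq\Gamma\circ\Xi_n$ that you patch in your third paragraph (at the price of assuming $r$ bounded above and away from $0$, which the theorem statement does not require) is silently glossed over in the paper's three-line proof, so your extra work is a careful refinement of the same argument rather than a different approach.
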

\begin{proof}
  According to \cite[Theorem 16]{bremaud_point_1981}, for any $n \ge 1$,
  $\overline R_{n}\circ \gamma^{-1}$ has the distribution of
   $\overline P_{n}$ defined in \eqref{eq_stability_core:7}. 
Therefore, from Theorem \ref{thm_stability_core:convergence_Poisson}, 
$(\overline R_{n}\circ \gamma^{-1},\, n\ge 1)$ converges at the usual rates to
    $B$. The result then follows from Lemma~\ref{thm_mycore:timeChange}.
\end{proof}

\section{Application to Continuous time Markov chains}
\label{sec:appliMarkov}

\subsection{General settings}
\label{sec:cont-time-mark}

Numerous Continuous time Markov chains (CTMC's) can be described as solutions of stochastic differential equations
with respect to a finite family of Poisson measures. 
For any $m\in\N^*$, any family $\left(\zeta_{1}, ...,\zeta_{m}\right)_{1\leq k\leq m}$ of elements of $\R^d$ and
any array $\left(\rho_k\right)_{1\leq k\leq m}$ of mappings from  $[0,T]\times\mathbb D_T$ to $\R$, consider the $\R^d$-valued process $X$ defined as the solution
of the SDE
$$X(t)=
X(0)+\sum_{k=1}^{m}\left(\int_0^t\int_{\R^+}\car_{\left\{z\leq\rho_k\left(s,X\right)\right\}}\dif\Pois_k(s,z)\right)\zeta_k,\quad t\le T,$$
where $X(0) \in \R^d$ is fixed, and $\left(\Pois_{k}\right)_{1\leq k\leq m}$ denote $m$ independent Poisson measures of unit intensity
$\dif s\otimes\!\!\dif z$.

Fix $n\in\N^*$ and an array $\mathbf\alpha:=(\alpha_1,...,\alpha_m)\in (\R)^m$.
We scale the process $X$ by replacing for all $k$, the measure  $\Pois_k$ by a Poisson measure $\Pois^{\alpha_k}_k$ of intensity
$(n^{\alpha_k}\dif s)\otimes\!\!\dif z$, and normalizing in space by~$n$. Then the process $\bX:=n^{-1}X_{n}$ is  the solution of the following SDE: 
For
any $t\ge 0$,
\begin{equation}
\label{eq:defbarXn}
\bX(t)=\bX(0)+\frac1n \left(\sum_{k=1}^{m} \int_0^{t}\int_{\R^+}\car_{\left\{z\leq\rho_k\left(s, n\bX\right)\right\}}\dif\Pois^{\alpha_k}_k(s,z)\right)\,\zeta_k.
\end{equation}
The key assumption on our scaling is the following:
\begin{assumption}[{\bf Law of large numbers scaling}]
  \label{hypo:LFGN}
  For any $k\in\llbracket 1,m \rrbracket$, there exists a mapping $r_k\in
  \Lip_{c}(\R^{d}\to \R,\|\,\|_{\R^{d}})$ such that for all $n\in\N^*$, all $x\in\R^d$ and $t\le T$,
  $$n^{\alpha_k-1}\rho_k(t,n\,x) = r_k(t,x),$$
  and such that
   \begin{equation}
   \label{eq:LFGN2}
   \sup_{n\ge 1}\esp{\sup_{t\le T} \left|r_k(t,\overline{X}_{n}(t)\right|}\le K_k.
   \end{equation}
\end{assumption}
Observe that crucially, under Assumption \ref{hypo:LFGN} the $r_k$'s do not depend on $n$.
We denote for any $k$, by $\Poist^{\alpha_k}_k$, the compensated Poisson measures of $\Pois^{\alpha_k}_k$, that is,
we let
\begin{equation*}
 \dif\Poist^{\alpha_k}_k(s,z) = \dif\Pois^{\alpha_k}_k(s,z) - n^{\alpha_k} (\dif s \otimes\! \dif z).
\end{equation*}
Then, for all $n$ and all $k$, the $\R^d$-valued process
$M_{n,k,\bX}$, defined for all $t\le T$ by
\begin{equation}\label{eq:defMbX}
M_{n,k,\bX}(t) = \left(\int_0^{t}\int_{\R^+}\car_{\left\{z\leq n^{1-\alpha_k}\ r_k\left(s,\bX(s^-)\right)\right\}}\dif\Poist^{\alpha_k}_k(s,z)\right)\, \zeta_k
\end{equation}
is a martingale with respect to the natural filtration of the Poisson measures. 
It then follows from (\ref{eq:defbarXn}) that for all $n$ and $t$,
\begin{equation}
\label{eq:EDS}
\bX(t)=\bX(0)+\sum_{k=1}^{m} \left(\int_0^{t}r_k\left(s,\bX(s)\right)\dif s\right)\,\zeta_k\, +\sum_{k=1}^{m}n^{-1} M_{n,k,\bX}(t).
\end{equation}
In view of Assumption ~\ref{hypo:LFGN} and the Cauchy-Lipschitz Theorem, there exists a unique solution $\Lambda$ in $\mathcal C([0,T];\, \R^{d})$
to the integral equation
\begin{equation}
\label{eq:defLambda}
\Lambda(t)=\Lambda(0)+\sum_{k=1}^{m}\left( \int_0^tr_k(s,\Lambda(s))\dif s\right)\,\zeta_k,\quad t\ge 0.
\end{equation}
We have the following law of large numbers, 
\begin{theorem}
  \label{lemma:scaling}
Assume that there exists a  solution $\Lambda$ of (\ref{eq:defLambda}) on $[0,T]$.
  Suppose that Assumption~\ref{hypo:LFGN} holds. Then, there exists $c>0$ such
  that 
  \[\sup_{n\ge 1}\esp{\sup_{t\le T} \|\overline{X}_{n}(t)\|}\le e^{cT}\]
  and for all $n\ge 1$, 
   \begin{equation}
   \esp{\sup_{t\leq T}\left\|\bX(t)-\Lambda(t)\right\|}\le \left(\esp{\left\|\bX(0)-\Lambda(0)\right\|}+Kn^{-1/2}\right)e^{cT}.\label{eq_mycore:1}
  \end{equation}
\end{theorem}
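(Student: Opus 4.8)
Both bounds rest on the integral representation \eqref{eq:EDS}, and the heart of the matter is a uniform estimate of the martingale remainder $\sum_{k}n^{-1}M_{n,k,\bX}$. I would establish this first, since it feeds both assertions, and only afterwards run the elementary moment and Gronwall arguments.

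\textbf{The martingale bound.} For each $k$ set $m_{n,k}(t)=\int_{0}^{t}\int_{\R^{+}}\car_{\{z\le n^{1-\alpha_{k}}r_{k}(s,\bX(s^{-}))\}}\dif\Poist^{\alpha_{k}}_{k}(s,z)$, so that $M_{n,k,\bX}=m_{n,k}\,\zeta_{k}$ by \eqref{eq:defMbX}. This is a square-integrable martingale, and the isometry for compensated Poisson integrals, together with the fact that an indicator equals its own square, gives
\begin{multline*}
\esp{m_{n,k}(T)^{2}}=\esp{\int_{0}^{T}\int_{\R^{+}}\car_{\{z\le n^{1-\alpha_{k}}r_{k}(s,\bX(s^{-}))\}}\,n^{\alpha_{k}}\dif s\,\dif z}\\
=n\,\esp{\int_{0}^{T}r_{k}(s,\bX(s^{-}))\dif s}\le nTK_{k},
\end{multline*}
where the $z$-integration of the indicator produced the factor $n^{1-\alpha_{k}}r_{k}$ and the last inequality is \eqref{eq:LFGN2}. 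Doob's $L^{2}$-maximal inequality followed by Jensen's inequality then yields $\esp{\sup_{t\le T}|m_{n,k}(t)|}\le 2\sqrt{nTK_{k}}$, whence, after the spatial renormalization by $n^{-1}$,
$$\esp{\sup_{t\le T}\Bigl\|\sum_{k=1}^{m}n^{-1}M_{n,k,\bX}(t)\Bigr\|}\le\sum_{k=1}^{m}\frac{\|\zeta_{k}\|}{n}\,\esp{\sup_{t\le T}|m_{n,k}(t)|}\le K\,n^{-1/2},$$
with $K:=2\sum_{k=1}^{m}\|\zeta_{k}\|\sqrt{TK_{k}}$.

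\textbf{The a priori bound and the convergence.} Taking norms in \eqref{eq:EDS}, bounding the drift in expectation via \eqref{eq:LFGN2} (so that $\esp{\int_{0}^{T}|r_{k}(s,\bX(s))|\dif s}\le TK_{k}$), and inserting the martingale bound of the previous step (using $Kn^{-1/2}\le K$), I obtain
$$\sup_{n\ge1}\esp{\sup_{t\le T}\|\bX(t)\|}\le\esp{\|\bX(0)\|}+T\sum_{k=1}^{m}\|\zeta_{k}\|K_{k}+K<\infty,$$
which is $\le e^{cT}$ for $c$ large enough; this is the first assertion. For the second, I subtract \eqref{eq:defLambda} from \eqref{eq:EDS}:
\begin{multline*}
\bX(t)-\Lambda(t)=\bigl(\bX(0)-\Lambda(0)\bigr)+\sum_{k=1}^{m}\left(\int_{0}^{t}\bigl[r_{k}(s,\bX(s))-r_{k}(s,\Lambda(s))\bigr]\dif s\right)\zeta_{k}\\
+\sum_{k=1}^{m}n^{-1}M_{n,k,\bX}(t).
\end{multline*}
The $c$-Lipschitz continuity of the $r_{k}$'s bounds the integrand by $c\|\bX(s)-\Lambda(s)\|$; taking the supremum over $t\le u$, then expectations, and using the martingale bound once more yields
\begin{multline*}
\esp{\sup_{t\le u}\|\bX(t)-\Lambda(t)\|}\le\esp{\|\bX(0)-\Lambda(0)\|}\\
+c\sum_{k=1}^{m}\|\zeta_{k}\|\int_{0}^{u}\esp{\sup_{v\le s}\|\bX(v)-\Lambda(v)\|}\dif s+Kn^{-1/2}.
\end{multline*}
Gronwall's lemma then delivers exactly \eqref{eq_mycore:1}, the constant $\sum_{k}\|\zeta_{k}\|$ being absorbed into $c$.

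\textbf{Main obstacle.} The delicate step is the martingale bound: one must identify the compensator $n^{\alpha_{k}}\dif s\otimes\dif z$ correctly, see that the $z$-integral of the indicator contributes the factor $n^{1-\alpha_{k}}r_{k}$, and check that the surviving factor $n$ in the quadratic variation combines with the prefactor $n^{-1}$ to produce the rate $n^{-1/2}$ rather than a constant or a divergent bound — this is precisely the effect of the scaling in Assumption~\ref{hypo:LFGN}. The moment hypothesis \eqref{eq:LFGN2} is moreover what keeps the whole scheme non-circular: it controls the drift, and hence the martingale's quadratic variation, in $L^{1}$ \emph{without} presupposing any bound on $\bX$, which is exactly what a Gronwall-only approach would lack. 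The remaining steps are then routine.
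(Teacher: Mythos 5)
Your proof is correct and follows essentially the same route as the paper: subtract the two integral equations, exploit the Lipschitz continuity of the $r_k$'s, apply Gronwall's lemma, and control the martingale remainder at rate $n^{-1/2}$ through a maximal inequality combined with the moment bound \eqref{eq:LFGN2}. The only cosmetic difference is that you use Doob's $L^2$ inequality together with the Poisson isometry where the paper invokes the Burkholder--Davis--Gundy inequality; the two are interchangeable here, since both reduce the bound to the expected compensator $n\,\esp{\int_0^T r_k(s,\overline{X}_n(s))\dif s}\le nTK_k$.
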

\begin{proof}
By denoting for all $k\in\llbracket 1,m \rrbracket$, by $\lambda_k$ the Lipschitz constant of the 
mapping $r_k$, we readily get that for all $t\ge 0$, 
\begin{multline*}
\left\|\bX(t)-\Lambda(t)\right\| \le \left\|\bX(0)-\Lambda(0)\right\| \\
+ \left(\sum_{k=1}^{m}\lambda_k\|\zeta_k\|\right)\int_0^{t}\left\|\bX(s)-\Lambda(s)\right\|\dif s+\sum_{k=1}^{m}\left\|n^{-1}M_{n,k,\bX}(t)\right\|,
\end{multline*}
and it is then a classical consequence of Gronwall Lemma that for 
$c:=\sum_{k=1}^{m}\lambda_k\|\zeta_k\|$, 
\begin{multline*}
\esp{\sup_{t\leq T}\left\|\bX(t)-\Lambda(t)\right\|}\\
 \le 
\left(\esp{\left\|\bX(0)-\Lambda(0)\right\|}+\sup_{t\leq T}\sum_{k=1}^{m}\left\|n^{-1}M_{n,k,\bX}(t)\right\|\right)e^{cT}.
\end{multline*}
We conclude using the Burkholder-Davis-Gundy inequality in view of (\ref{eq:LFGN2}). 
\end{proof}
We now turn to the so-called diffusion scaling of the process $X$. We  study the
sequence of processes $(U_n,\, n\ge 1)$ defined for all $n$ by
$$U_n=n^{1/2}\ \Bigl(\bX-\Lambda\Bigr),\quad n\ge 1.$$
In view of~\eqref{eq:defMbX}, we see that the integrands in the $M_{n,k,\bX}$'s, $k\in\llbracket 1,m \rrbracket$, are random processes that depend on~$n$. In this form, we cannot directly use Corollary~\ref{cor:convergence_integral_mesures_poisson}. The
key idea is to introduce  intermediate martingales with deterministic
integrands. The error made with this additional process is easily controlled by
sample-paths estimates.

So let us introduce for all $n$, the $\R^d$-valued martingales $(M_{n,k,\bX})_{k\in\llbracket 1,m \rrbracket}$,  $(M_{n,k,\Lambda})_{k\in\llbracket 1,m \rrbracket}$ and
$\overline{M}_{n,\Lambda}$, respectively defined for all $t\le T$ by
\[\begin{cases}
  M_{n,k,\Lambda}(t)& = \displaystyle\left(\int_0^{t}\int_{\R^+}\car_{\left\{z\leq n^{1-\alpha_k}\,r_k(s,\Lambda(s))\right\}}\dif\Poist^{\alpha_k}_k(s,z)\right)\,\zeta_k\,;
  \\
  \overline{M}_{n,\bX}(t)& = \displaystyle n^{-1/2}\, \sum_{k=1}^m M_{n,k,\bX}(t)\,;\nonumber
  \\ 
  \overline{M}_{n,\Lambda}(t) &=\displaystyle n^{-1/2}\, \sum_{k=1}^m M_{n,k,\Lambda}(t)\,. \nonumber
\end{cases}\]
Observe that the crucial difference between $\overline{M}_{n,\bX}$ and
  $\overline{M}_{n,\Lambda}$ is that the indicator functions appearing in
  the latter involve deterministic processes and thus  $\overline{M}_{n,\Lambda}$ has independent increments. 
From \eqref{eq:EDS} and \eqref{eq:defLambda}, for all $n$, for any $t\in [0,T]$, we have that
\begin{multline}
\label{eq:defU}
U_n(t)=U_n(0)\\+n^{1/2}\, \sum_{k=1}^{m}\left(
\int_0^{t}\big(r_k(s,\bX(s))-r_k(s,\Lambda(s))\big)\dif
s\right)\, \zeta_k
+\overline{M}_{n,\bX}(t).
\end{multline}
To obtain a formal functional central limit theorem for the sequence
$(U_n,\, n\ge 1)$, we make the following additional assumptions.

\begin{assumption}[{\bf Diffusion scaling}] 
\textit{ }
  \label{hypo:FCLT}
  \begin{itemize}
  \item[(i)] The initial conditions are such that 
  \begin{equation}
    \label{eq:controlCI}
    \esp{\left\| \bX(0)-\Lambda(0)\right\|}\lec{}  
   n^{-1/2},
    \end{equation}
  \item[(ii)] For all $n$, all $k\in\llbracket 1,m \rrbracket$ and all $t\in [0,T]$,
    \begin{equation}
    \label{eq:decomprho}
    {n^{1/2}}\left(r_k(t,\bX(t))-r_k(t,\Lambda(t))\right)=\Bigl(\langle L_k, U_n(t)\rangle_{\R^d}+E_{n,k}(t) \Bigr),
    \end{equation}
    where for all $k\in\llbracket 1,m \rrbracket$, $L_{k}\in \R^{d}$ and
    \begin{equation*}
    \label{eq:controleps}
   \esp{\left\|E_{n,k}\right\|_{\infty,T}}\xrightarrow{n\to \infty} 0.
    \end{equation*}
    \end{itemize} 
 \end{assumption}
\begin{theorem}
\label{thm:main}
    Assume that Assumptions \ref{hypo:LFGN} and \ref{hypo:FCLT} hold. Then 
$(U_{n},\, n\ge 1)$ converges at the usual rates to
    \begin{equation*}
      \Theta_{\mathbf A}\left( \sum_{k=1}^{m}(B_{k}\circ \gamma_{k})\,\zeta_{k} \right),
    \end{equation*}
where $\Theta_{\mathbf A}$ is the map associated to the matrix $\mathbf A=(L_1,...,L_m) \otimes (\zeta_1,...,\zeta_m)$ by (\ref{eq:defTheta}), 
$(B_{k})_{k\in \llbracket 1,m\rrbracket}$ are independent one dimensional Brownian
motions, and for any $k\in\llbracket 1,m \rrbracket$, $\gamma_{k}$ is defined in function of the mapping $t\mapsto r_k(t,\Lambda(t))$ through  \eqref{eq_stability_core:def_changement_temps}. 
     \end{theorem}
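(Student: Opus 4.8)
The plan is to recognise $U_{n}$ as, up to a small remainder, the image under the solution operator $\Theta_{\mathbf A}$ of \eqref{eq:defTheta} of a martingale with \emph{deterministic} integrands, and then to transport the convergence of that martingale — which is exactly what Theorem~\ref{cor:convergence_integral_mesures_poisson} provides — through the Lipschitz, $\Upsilon_{T}$-preserving map $\Theta_{\mathbf A}$ supplied by Theorem~\ref{thm_stability_core:ODE}.

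\textbf{Step 1 (linearising the drift).} I would first substitute the expansion \eqref{eq:decomprho} of Assumption~\ref{hypo:FCLT} into the drift term of \eqref{eq:defU}. This rewrites \eqref{eq:defU} as
\begin{equation*}
U_{n}(t)=U_{n}(0)+\int_{0}^{t}\mathbf A\,U_{n}(s)\dif s+\mathcal E_{n}(t)+\overline M_{n,\bX}(t),
\end{equation*}
with $\mathbf A\,u=\sum_{k}\zeta_{k}\langle L_{k},u\rangle_{\R^{d}}$ the matrix $(L_{1},\dots,L_{m})\otimes(\zeta_{1},\dots,\zeta_{m})$ and $\mathcal E_{n}(t)=\sum_{k}\bigl(\int_{0}^{t}E_{n,k}(s)\dif s\bigr)\zeta_{k}$. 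This is precisely the integral equation \eqref{eq_stability_core:1} with matrix $\mathbf A$, so $U_{n}=\Theta_{\mathbf A}(\mathcal E_{n}+\overline M_{n,\bX})$ modulo the homogeneous contribution of the initial value $U_{n}(0)=n^{1/2}(\bX(0)-\Lambda(0))$, which \eqref{eq:controlCI} keeps under control.

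\textbf{Step 2 (freezing the integrands, identifying the limit).} Next I would replace the random-integrand martingale $\overline M_{n,\bX}$ by its deterministic analogue $\overline M_{n,\Lambda}$. By the Burkholder--Davis--Gundy inequality, the Lipschitz continuity of the $r_{k}$ and the law-of-large-numbers bound $\esp{\|\bX-\Lambda\|_{\infty,T}}\lec{}n^{-1/2}$ of Theorem~\ref{lemma:scaling}, the predictable bracket of $\overline M_{n,\bX}-\overline M_{n,\Lambda}$ is of order $n^{-1/2}$, whence $\esp{\|\overline M_{n,\bX}-\overline M_{n,\Lambda}\|_{\infty,T}}\lec{}n^{-1/4}$. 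Since $\overline M_{n,\Lambda}=\sum_{k}n^{-1/2}M_{n,k,\Lambda}$ is a sum of \emph{independent} compensated Poisson integrals with deterministic thresholds $n^{1-\alpha_{k}}r_{k}(\cdot,\Lambda(\cdot))$, a change of variables in the $z$–coordinate brings each summand to the form of Theorem~\ref{cor:convergence_integral_mesures_poisson}; hence $\overline M_{n,\Lambda}$ converges at the usual rates to $G:=\sum_{k}(B_{k}\circ\gamma_{k})\,\zeta_{k}$, the $B_{k}$ being independent because the $\Pois_{k}$ are.

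\textbf{Step 3 (transport and error assembly).} I would set $\widetilde U_{n}:=\Theta_{\mathbf A}(\overline M_{n,\Lambda})$, so that $Z=\Theta_{\mathbf A}(G)$. For each distance $D\in\{\KR_{\cont_{T}},\FrF^{\pi}_{\cont_{T}},\JJ_{\cont_{T}}\}$, whose test functions are $1$-Lipschitz, the triangle inequality together with $\|\Xi_{n}h\|_{\infty,T}\le\|h\|_{\infty,T}$ yields
\begin{equation*}
D(\Xi_{n}U_{n},Z)\le D(\Xi_{n}\widetilde U_{n},Z)+\esp{\|U_{n}-\widetilde U_{n}\|_{\infty,T}},
\end{equation*}
and the last term is controlled, via the linear estimate in Theorem~\ref{thm_stability_core:ODE} applied to $U_{n}-\widetilde U_{n}$, by $n^{-1/4}$, the vanishing drift remainder $\esp{\|\mathcal E_{n}\|_{\infty,T}}$, and the initial term governed by \eqref{eq:controlCI}. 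For $\KR$ and $\JJ$ the first term transfers cleanly: $F\mapsto F\circ\Theta_{\mathbf A}$ sends $\Lip_{1}$ into $\Lip_{c}$ (linearity and Lipschitz continuity), and $\Lip_{1}^{\Upsilon}$ into $\Lip_{c}^{\Upsilon}$ (because $\Theta_{\mathbf A}$ preserves $\Upsilon_{T}$); after inserting the affine interpolation of $\overline M_{n,\Lambda}$, whose $L^{1}$ error is $O(\log n/\sqrt n)$ by Theorem~\ref{thm_mycore:bound}, these reduce to the $\KR$ and $\JJ$ bounds already known for $\overline M_{n,\Lambda}$, giving the rates $n^{-1/6}\log n$ and $n^{-1/4}$.

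\textbf{Main obstacle.} The hard part will be the finite-rank distance $\FrF^{\pi}$. Unlike the time-change map of Lemma~\ref{thm_mycore:timeChange}, $\Theta_{\mathbf A}$ is an \emph{integral} operator, so $\Theta_{\mathbf A}(f)(t_{i})$ depends on the whole trajectory $f|_{[0,t_{i}]}$ and $F\circ\Theta_{\mathbf A}$ is no longer of finite rank with respect to $\pi$; the one-line transfer used for $\KR$ and $\JJ$ therefore fails. Approximating the integral by a Riemann sum on the $n$–grid would restore finite rank but replace $|\pi|^{-1}$ by $n$, destroying the target rate $|\pi|^{-1}n^{-1/4}$ of \eqref{eq_stability_core:9}. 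Producing instead a \emph{coarse} finite-rank surrogate for $F\circ\Theta_{\mathbf A}$ with an error compatible with \eqref{eq_stability_core:9} is where the real work lies; one must also fold in the drift remainder — for which Assumption~\ref{hypo:FCLT}(ii) furnishes only $\esp{\|E_{n,k}\|_{\infty,T}}\to0$, with no a priori rate — and the matching of initial conditions through \eqref{eq:controlCI}, into these same three estimates.
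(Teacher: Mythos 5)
Your proposal is essentially the paper's own proof: the same representation $U_{n}=\Theta_{\mathbf A}\bigl(\mathcal E_{n}+\overline M_{n,\bX}\bigr)$ (the paper's \eqref{eq_stability_core:10}), the same Burkholder--Davis--Gundy freezing of the random integrands combined with Theorem~\ref{lemma:scaling} and \eqref{eq:controlCI} to get the $n^{-1/4}$ bound (the paper's \eqref{eq:bofbof}), the same identification of the limit of $\overline M_{n,\Lambda}$ through Theorem~\ref{cor:convergence_integral_mesures_poisson}, and the same transport by the Lipschitz, $\Upsilon_{T}$-preserving map $\Theta_{\mathbf A}$ of Theorem~\ref{thm_stability_core:ODE}. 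The two obstacles you flag at the end --- that $F\circ\Theta_{\mathbf A}$ is not of finite rank, so the $\FrF^{\pi}$ estimate does not transfer by the one-line argument that works for $\KR$ and $\JJ$, and that Assumption~\ref{hypo:FCLT}(ii) gives no rate for $\esp{\|E_{n,k}\|_{\infty,T}}$ --- are genuine, but they are gaps in the paper's own proof as well, which settles all three distances with the single assertion that "the result follows from the Lipschitz continuity of $\Theta_{\mathbf A}$"; your write-up is, if anything, more candid about what remains to be verified.
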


  \begin{proof}
In view of \eqref{eq:defU} and (\ref{eq:decomprho}), we have for all $n\ge 1$,  
\begin{equation}
  \label{eq_stability_core:10}
  U_{n}=\Theta_{\mathbf A}\left(\sum_{k=1}^{m}\left(\int_0^t E_{n,k}(s)\dif s\right)\,\zeta_k 
  +\overline{M}_{n,\overline{X}_{n}} \right).
\end{equation}
But first, according to Assumption \ref{hypo:FCLT} we get that 
\begin{equation}
\label{eq:bof}
  \esp{\left\| \sum_{k=1}^{m}\left(\int_0^.E_{n,k}(s)\dif s\right)\,\zeta_k\right\|_{\infty,T}} \xrightarrow[]{n\to \infty}0.
\end{equation}
Second, from the Burkholder Davis Gundy inequality, for any $k$ there exists a constant $c_k$ such that
  \begin{multline*}
    \esp{\left\|n^{-1/2}M_{n,k,\bX}-n^{-1/2}M_{n,k,\Lambda}\right\|_{\infty,T}}\\
    \begin{aligned}
      &\le c_k\ n^{-1/2}
       \Vert\zeta_k\Vert\ n^{1/2}\esp{\int_0^{T}\left|r_k(t,\bX(t))-r_k(t,\Lambda(t))\right|\dif
        t}^{1/2}\\
      &\le c_{k}\  \Vert\zeta_k\Vert\ \esp{\int_0^{T} \|\bX(t)-\Lambda(t)\|_{\R^d}\dif t}^{1/2},
    \end{aligned}
  \end{multline*}
 so it follows from \eqref{eq_mycore:1} together with \eqref{eq:controlCI}, that 
 \begin{equation}
 \label{eq:bofbof}
    \esp{\left\|n^{-1/2}M_{n,k,\bX}-n^{-1/2}M_{n,k,\Lambda}\right\|_{\infty,T}}\lec{}  
   n^{-1/4}.
  \end{equation}
On another hand, or any $k\in \llbracket 1,m\rrbracket$, $M_{n,k,\Lambda}$ has the distribution of
    \begin{equation*}
      t\longmapsto \left(\int_{0}^{t}\int_{\R^{+}} \car_{\{z\le r_{k}\left(s,\Lambda(s)\right)\}}\dif \Poist^{n}(s,z)\right)\, \zeta_{k},
    \end{equation*}
    where $\Pois^n$ is a Poisson measure of intensity $n \dif t\otimes \dif z$,  
and it follows from Corollary~\ref{cor:convergence_integral_mesures_poisson} that the process 
$n^{-1/2}M_{n,k,\Lambda}$ converges at the usual rates to $\left(B_k\circ\gamma_k\right)\,\zeta_k$.  
The result then follows from the Lipschitz continuity of 
$\Theta_{\mathbf A}$ in view of the representation~\eqref{eq_stability_core:10} and Theorem~\ref{thm_stability_core:ODE}, together with (\ref{eq:bof}) and (\ref{eq:bofbof}). 
  \end{proof}

\subsection{The Telegraph process}
\label{sec:onoff}
Let $Y_i,\,i\in \llbracket 1,n \rrbracket$ be an IID family of CTMC's taking values in $\{0,1\}$, 
with transition intensity $\sigma_0$ (resp., $\sigma_1$) from state $0$ to state $1$ (resp., from state $1$ to state $0$), and let $X_n$ be the process defined by 
$$X_n(t)=\sum_{i=1}^{n}Y_i(t),\quad t\ge 0.$$
The processes $Y_i,\,i\in \llbracket 1,n \rrbracket$ are often called {\em telegraph processes}, and model various phenomena in finance, physics, networking and biology. The states $0$ and $1$ can be respectively interpreted as `Off' and `On' modes, and so $X_n$ counts the number of telegraph processes in `On' mode at each time. (It is often itself called, a telegraph process.) 
Denote by $\pi_0$ and $\pi_1$, the common stationary probability of the $Y_i$'s, 
$i\in \llbracket 1,n \rrbracket$, namely 
$$\pi_0=\frac{\sigma_1}{\sigma_0+\sigma_1}\quad\text{ and }\quad\pi_1=\frac{\sigma_0}{\sigma_0+\sigma_1}\cdot$$
It is immediate to observe that for any $n$, the limiting distribution $X_n(\infty)$ of the process $X_n$ 
is binomial of parameters $n,\,\pi_1$, and that 
$$\sqrt{n}\left({X_n(\infty) \over n} - \pi_1\right)
\Longrightarrow \mathcal N(0,\pi_0\pi_1).$$
At the process level, it is shown in \cite{Decreusefondlang} 
that under suitable assumptions on the initial conditions, for all $T>0$, 
\[U_n=\sqrt{n}\left(\frac{X_n}{n}-\Lambda\right) \Longrightarrow \Theta(Y) \quad \mbox{ in }\mathbb D([0,T],\R),\]
where 
\begin{itemize}
\item For a fixed $\Lambda(0)\in\R+$, 
\begin{equation}
\label{eq:defLambdaONOFF}
\Lambda(t)=\pi_1+(\Lambda(0)-\pi_1)\exp(-(\sigma_1+\sigma_0)t),\quad t\ge 0;
\end{equation}
\item 
For any $f$, $\Theta(f)$  denotes the unique solution in  $\mathbb D([0,T],\R)$ of the equation 
\begin{equation*}
\label{eq:equadiffONOFF}
{g(t)=(\sigma_1-\sigma_0)\int_0^tg(s)\dif s+f(t),\quad t\ge 0};
\end{equation*} 
\item The process $Y$ is defined by 
$$Y(t)=\int_0^t\sqrt{\sigma_0(1-\Lambda(s))}\dif B_1(s)-\int_0^t\sqrt{\sigma_1\Lambda(s)}\dif B_2(s),\quad t\ge 0,$$
for $B_1$ and $B_2$, two independent standard Brownian motions. 
\end{itemize}

\noindent We have the following result, 
\begin{proposition}
\label{prop:ONOFF}
Suppose that condition (i) of Assumption \ref{hypo:FCLT} is satisfied. 
Then the convergence of $(U_{n},\, n\ge 1)$ to $\Theta(Y)$ occurs at the usual rates. 
\end{proposition}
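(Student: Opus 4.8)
The plan is to verify that the telegraph process fits the general framework of Theorem~\ref{thm:main}, so that the conclusion follows immediately from the master result. The key observation is that $X_n$ is itself a CTMC of the type considered in Section~\ref{sec:cont-time-mark}, driven by Poisson measures. Specifically, the transitions of $X_n$ occur through two mechanisms: an ``Off-to-On'' transition, which increases $X_n$ by one and fires at rate $\sigma_0$ for each of the $(n-X_n)$ processes currently in the Off state, and an ``On-to-Off'' transition, which decreases $X_n$ by one and fires at rate $\sigma_1$ for each of the $X_n$ processes in the On state. First I would write $X_n$ as the solution of an SDE of the form in Section~\ref{sec:cont-time-mark} with $m=2$, jump vectors $\zeta_1=+1$ and $\zeta_2=-1$, and intensity functions $\rho_1(t,x)=\sigma_0(n-x)$ and $\rho_2(t,x)=\sigma_1 x$.

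Next I would check Assumption~\ref{hypo:LFGN}. With the space scaling $\bX=n^{-1}X_n$ and the natural time scaling $\alpha_1=\alpha_2=1$, we have $n^{\alpha_k-1}\rho_k(t,n\,x)=\rho_k(t,nx)$, giving $r_1(t,x)=\sigma_0(1-x)$ and $r_2(t,x)=\sigma_1 x$. These are affine, hence globally Lipschitz in $x$ with constants $\sigma_0$ and $\sigma_1$, and they do not depend on $n$, as required. The uniform integrability bound \eqref{eq:LFGN2} follows because $\bX$ takes values in $[0,1]$, so $r_k(t,\bX(t))$ is bounded. The fluid limit equation \eqref{eq:defLambda} then reads $\Lambda(t)=\Lambda(0)+\int_0^t\big(\sigma_0(1-\Lambda(s))-\sigma_1\Lambda(s)\big)\dif s$, whose unique solution is precisely \eqref{eq:defLambdaONOFF}.

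Then I would verify Assumption~\ref{hypo:FCLT}. Part (i) is assumed in the hypothesis of the proposition. For part (ii), the affine structure of the $r_k$'s makes the decomposition \eqref{eq:decomprho} exact with no error term: since $r_1(t,\bX(t))-r_1(t,\Lambda(t))=-\sigma_0(\bX(t)-\Lambda(t))$ and $r_2(t,\bX(t))-r_2(t,\Lambda(t))=\sigma_1(\bX(t)-\Lambda(t))$, multiplying by $n^{1/2}$ and recalling $U_n=n^{1/2}(\bX-\Lambda)$ yields $L_1=-\sigma_0$, $L_2=\sigma_1$, and $E_{n,k}\equiv 0$, so that the convergence $\esp{\|E_{n,k}\|_{\infty,T}}\to 0$ holds trivially. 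The main obstacle here is essentially bookkeeping rather than genuine difficulty: one must confirm that the matrix $\mathbf A=(L_1,L_2)\otimes(\zeta_1,\zeta_2)$ produces the correct drift $(\sigma_1-\sigma_0)$ in the limiting ODE. Indeed $\mathbf A=L_1\zeta_1+L_2\zeta_2=(-\sigma_0)(1)+(\sigma_1)(-1)=-(\sigma_0+\sigma_1)$ as the coefficient multiplying $U_n$, which after inversion through $\Theta_{\mathbf A}$ matches the operator defined by the equation $g(t)=(\sigma_1-\sigma_0)\int_0^t g(s)\dif s+f(t)$ of \cite{Decreusefondlang}.

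Finally, having checked both assumptions, I would invoke Theorem~\ref{thm:main} to conclude that $(U_n,\,n\ge1)$ converges at the usual rates to $\Theta_{\mathbf A}\big(\sum_{k=1}^{2}(B_k\circ\gamma_k)\zeta_k\big)$, and then identify this limit with $\Theta(Y)$. The time changes $\gamma_k$ are built from $t\mapsto r_k(t,\Lambda(t))$, so $\gamma_1(t)=\int_0^t\sigma_0(1-\Lambda(s))\dif s$ and $\gamma_2(t)=\int_0^t\sigma_1\Lambda(s)\dif s$; by the standard time-change representation of Brownian stochastic integrals, $B_1\circ\gamma_1$ and $B_2\circ\gamma_2$ have the distributions of $\int_0^\cdot\sqrt{\sigma_0(1-\Lambda(s))}\dif B_1(s)$ and $\int_0^\cdot\sqrt{\sigma_1\Lambda(s)}\dif B_2(s)$ respectively. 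Since $\zeta_1=+1$ and $\zeta_2=-1$, the sum $\sum_k(B_k\circ\gamma_k)\zeta_k$ reproduces exactly the driving process $Y$, and applying $\Theta_{\mathbf A}=\Theta$ completes the identification. The proof is then concluded.
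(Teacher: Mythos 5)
Your overall strategy is exactly the paper's: represent the telegraph process as an SDE driven by Poisson measures, verify Assumptions \ref{hypo:LFGN} and \ref{hypo:FCLT}, and invoke Theorem \ref{thm:main}. However, your verification of Assumption \ref{hypo:LFGN} fails as written. You take $\alpha_1=\alpha_2=1$ together with $\rho_1(t,x)=\sigma_0(n-x)$ and $\rho_2(t,x)=\sigma_1 x$. With $\alpha_k=1$ the prefactor $n^{\alpha_k-1}$ equals $1$, so
\begin{equation*}
n^{\alpha_1-1}\rho_1(t,nx)=\sigma_0(n-nx)=n\,\sigma_0(1-x),
\end{equation*}
which is $n$ times the $r_1$ you announce and, in particular, still depends on $n$, contradicting the requirement that the $r_k$'s be $n$-free. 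Worse, the choice $\alpha_k=1$ means the driving Poisson measures have intensity $n\,\dif s\otimes\dif z$, so the SDE you write describes a process whose up-jump rate is $n\cdot\rho_1(t,X_n)=n\sigma_0(n-X_n)$, i.e.\ of order $n^2$; that is not the telegraph process, whose up-rate is $\sigma_0(n-X_n)$. The point is that for this model the factor $n$ in the rates comes from the population size, not from any time acceleration: the correct choice (the paper's) is $\alpha_1=\alpha_2=0$, for which $n^{\alpha_k-1}\rho_k(t,nx)=n^{-1}\rho_k(t,nx)$ gives exactly $r_1(t,x)=\sigma_0(1-x)$ and $r_2(t,x)=\sigma_1 x$, independent of $n$. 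With that repair, the rest of your argument (linearity of the $r_k$'s, $L_1=-\sigma_0$, $L_2=\sigma_1$, $E_{n,k}\equiv0$, identification of the $B_k\circ\gamma_k$ with the stochastic integrals defining $Y$) goes through and coincides with the paper's proof.

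One further remark: your computed drift $\mathbf A=L_1\zeta_1+L_2\zeta_2=-(\sigma_0+\sigma_1)$ is correct, but it does not ``match'' the coefficient $(\sigma_1-\sigma_0)$ in the equation you quote for $\Theta$: those two numbers differ unless $\sigma_1=0$. The discrepancy is a sign typo in the paper's displayed equation defining $\Theta$ (linearizing the fluid drift $x\mapsto\sigma_0(1-x)-\sigma_1 x$ indeed gives $-(\sigma_0+\sigma_1)$), so the honest conclusion is that the limiting operator is $\Theta_{\mathbf A}$ with $\mathbf A=-(\sigma_0+\sigma_1)$, rather than an assertion of equality between two visibly different coefficients.
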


\begin{proof}
Let for all $n\ge 1$, $\overline X_n(t)=X_n(t)/n$, $t\ge 0$. Then by the very definition of $X_n$ we have the equality in distribution  
\begin{multline*}
\overline X_n(t) \overset{(d)}{=}\overline X_n(0)+\int_0^t\int_{\mathbb{R}^+}\mathbf{1}_{\{z\leq n\sigma_0(1-\overline X_n(s^-))\}}\dif\Pois_1(s,z)\\
-\int_0^t\int_{\mathbb{R}^+}\mathbf{1}_{\{z\leq n\sigma_1 \overline X_n(s^-)\}}\dif\Pois_{2}(s,z),\quad t\ge 0, 
\end{multline*}
where $\Pois_{1}$ and $\Pois_{2}$ denote two independent Poisson random measures of common intensity $\dif s \otimes \!\dif z$ on $(\R_+)^2$, 
representing the overall ``up'' and ``down'' jumps, respectively. So $\overline X_n$ satisfies the SDE (\ref{eq:defbarXn}) for $d=1$, $m=2$, $\alpha_1=\alpha_2=0$, $\zeta_1=1$, $\zeta_2=-1$, and for the mappings 
$\rho_1: (t,y) \mapsto \sigma_0(n-y)$ and $\rho_2: (t,y) \mapsto \sigma_1y$. 
It is then clear that Assumption \ref{hypo:LFGN} is satisfied for 
\[r_1:(t,x) \longmapsto \sigma_0(1-x)\quad \mbox{ and }\quad r_2:(t,x)\longmapsto -\sigma_1x,\]
which are obviously Lipschitz continuous with respect to their second variable. 
Plainly, $\Lambda$ defined by (\ref{eq:defLambdaONOFF}) is the unique solution of (\ref{eq:defLambda}) in the present case. 
As $r_1$ and $r_{2}$ are linear in their second variable, condition (ii) in Assumption \ref{hypo:FCLT} is clearly satisfied for 
$L_1=-\sigma_0$, $L_2 = \sigma_1$ and $E_{n,1}\equiv E_{n,2}\equiv 0$ 
for all $n$, and so the corresponding operator $\Theta_{\mathbf A}=\Theta$ defined by (\ref{eq:defTheta}) 
is linear, continuous and therefore Lipschitz continuous. 
We conclude using Theorem \ref{thm:main}.
\end{proof}

\subsection{Two classical queueing systems}
It is a simple matter to show that the single-server queue $M/M/1$ and the infinite server queue $M/M/\infty$, 
for which the speed of convergence in the functional central limit theorem was already addressed in \cite{besancon_steins_2018}, can be studied in the present framework. The convergence rate obtained hereafter is $n^{1/6}$, 
slightly slower than that obtained in  \cite{besancon_steins_2018} ($n^{1/2}$),
where we used representation methods that are specific to each of these models.

\subsubsection{The infinite server queue}
We consider an M/M/$\infty$ queue: a potentially unlimited number of servers
attend customers that enter the system following a Poisson process of intensity
$\lambda>0$, requesting service times that are exponentially distributed of
parameter $\mu>0$. 
Let for all $t\ge 0$, $L^\#(t)$ denote the number of customers in the system at time $t$. 
It is well known that $L^\#$ is an ergodic Markov process with stationary distribution Poisson of parameter 
$\lambda/\mu$. 
Let us scale this process in space and time, by dividing the number of customers by $n$, while multiplying 
the intensity of arrivals by $n$. Namely, we denote for all $t\ge 0$, $\overline L^\#_n(t)=L^\#(t)/n$. 
Then, it is a classical result (see e.g. \cite{borovkov1967limit}, Theorem 6.14 in \cite{robert_stochastic_2003}, or a measure-valued extension of the result in \cite{decreusefond_functional_2008}), that under suitable assumptions on the initial conditions, for all $T>0$, 
\[U_n=\sqrt{n}\left(\overline L^\#_n-\Lambda\right) \Longrightarrow \Theta(Y) \quad \mbox{ in }\mathbb D([0,T],\R),\]
where 
\begin{itemize}
\item For a fixed $\Lambda(0)\in\R+$, 
\begin{equation}
\label{eq:defLambdainfinite}
\frac{\lambda}{\mu}-\left(\Lambda(0)-\frac{\lambda}{\mu}\right)\exp(-\mu t),\quad t\ge 0;
\end{equation}
\item 
For any stochastic process $f$, $\Theta(f)$  denotes the unique solution in $\mathbb D([0,T],\R)$ of the SDE
\begin{equation*}
\label{eq:equadiffinfinite}
{g(t)=-\mu\int_0^tg(s)\dif s+f(t),\quad t\ge 0};
\end{equation*} 
\item The process $Y$ is defined by 
$$Y(t)=\sqrt{\lambda} B_1(t) - \int_0^t\sqrt{\mu\Lambda(s)}\dif B_2(s),\quad t\ge 0,$$
for $B_1$ and $B_2$, two independent standard Brownian motions. 
\end{itemize}

\noindent We have the following result, 
\begin{proposition}
\label{thm:infinite}
Suppose that condition (i) of Assumption \ref{hypo:FCLT} is satisfied. 
Then the convergence of $(U_{n},\, n\ge 1)$ to $\Theta(Y)$ occurs at the usual rates. 
\end{proposition}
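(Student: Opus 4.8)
The plan is to verify that the infinite server queue falls within the scope of the master result Theorem~\ref{thm:main}, exactly as was done for the telegraph process in Proposition~\ref{prop:ONOFF}. First I would write $\overline L^\#_n(t)=L^\#(t)/n$ as the solution of an SDE of the form \eqref{eq:defbarXn}. The M/M/$\infty$ dynamics are driven by two independent unit-intensity Poisson measures $\Pois_1,\Pois_2$ on $(\R^+)^2$: arrivals occur at rate $n\lambda$ (the scaled arrival intensity), and each of the $L^\#(s^-)$ customers in service departs at rate $\mu$, so the aggregate departure rate is $\mu L^\#(s^-)=\mu n\overline L^\#_n(s^-)$. This gives the representation
\begin{multline*}
\overline L^\#_n(t)\overset{(d)}{=}\overline L^\#_n(0)+\frac1n\int_0^t\int_{\R^+}\car_{\{z\le n\lambda\}}\dif\Pois_1(s,z)\\
-\frac1n\int_0^t\int_{\R^+}\car_{\{z\le n\mu\overline L^\#_n(s^-)\}}\dif\Pois_2(s,z),
\end{multline*}
so that \eqref{eq:defbarXn} holds with $d=1$, $m=2$, $\alpha_1=\alpha_2=1$, $\zeta_1=1$, $\zeta_2=-1$, and $\rho_1:(t,y)\mapsto\lambda$, $\rho_2:(t,y)\mapsto\mu y$.

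Next I would check Assumption~\ref{hypo:LFGN}. Setting $r_1:(t,x)\mapsto\lambda$ and $r_2:(t,x)\mapsto-\mu x$, one has $n^{\alpha_k-1}\rho_k(t,nx)=r_k(t,x)$ for $k=1,2$, and both $r_k$ are Lipschitz in their second variable (the constant map and a linear map). The integrability bound \eqref{eq:LFGN2} follows from the moment estimate $\sup_n\esp{\sup_{t\le T}\|\overline L^\#_n(t)\|}\le e^{cT}$ furnished by Theorem~\ref{lemma:scaling}. The fluid limit $\Lambda$ solving \eqref{eq:defLambda} is precisely $\dot\Lambda=\lambda-\mu\Lambda$, whose solution with the prescribed initial value is \eqref{eq:defLambdainfinite}, so $\Lambda$ exists and is unique on $[0,T]$. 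Then I would verify condition (ii) of Assumption~\ref{hypo:FCLT}: since $r_1$ is constant and $r_2$ is linear in $x$, the difference $r_k(t,\overline X_n(t))-r_k(t,\Lambda(t))$ is either zero or exactly $-\mu(\overline X_n(t)-\Lambda(t))=-\mu\,n^{-1/2}U_n(t)$, so \eqref{eq:decomprho} holds with $L_1=0$, $L_2=-\mu$ and $E_{n,1}\equiv E_{n,2}\equiv0$ for every $n$; the error condition $\esp{\|E_{n,k}\|_{\infty,T}}\to0$ is then trivial. Condition (i) is assumed in the hypothesis of the proposition.

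Finally I would invoke Theorem~\ref{thm:main}. The associated matrix is $\mathbf A=(L_1,L_2)\otimes(\zeta_1,\zeta_2)$, which reduces here (with $L_1=0$, $L_2=-\mu$, $\zeta_1=1$, $\zeta_2=-1$) to the scalar $A=-\mu$, so the operator $\Theta_{\mathbf A}$ is exactly the solution map $\Theta$ of $g(t)=-\mu\int_0^tg(s)\dif s+f(t)$; being linear and continuous, it is Lipschitz, as required by Theorem~\ref{thm_stability_core:ODE}. The time changes $\gamma_1,\gamma_2$ are built from $t\mapsto r_1(t,\Lambda(t))=\lambda$ and $t\mapsto r_2(t,\Lambda(t))=-\mu\Lambda(t)$, and the limiting driving noise $\sum_k(B_k\circ\gamma_k)\zeta_k$ matches $Y(t)=\sqrt\lambda B_1(t)-\int_0^t\sqrt{\mu\Lambda(s)}\dif B_2(s)$ once one rewrites the time-changed Brownian motions as stochastic integrals with the appropriate deterministic quadratic variations. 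Theorem~\ref{thm:main} then yields convergence of $(U_n)$ to $\Theta(Y)$ at the usual rates. I do not expect any genuine obstacle; the only point requiring a little care is matching the abstract limit $\sum_k(B_k\circ\gamma_k)\zeta_k$ with the stated process $Y$, which is the standard identification of a time-changed Brownian motion $B\circ\gamma$ with $\int_0^t\sqrt{\dot\gamma(s)}\,\dif\widetilde B(s)$ in distribution.
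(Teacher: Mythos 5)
Your strategy is exactly the paper's: cast $\overline L^\#_n$ as a solution of \eqref{eq:defbarXn}, verify Assumptions \ref{hypo:LFGN} and \ref{hypo:FCLT} with $E_{n,1}\equiv E_{n,2}\equiv 0$, and conclude by Theorem \ref{thm:main}, identifying $\Theta_{\mathbf A}$ with $\Theta$ and $\sum_k (B_k\circ\gamma_k)\zeta_k$ with $Y$. The paper's proof does precisely this, with $r_1:(t,y)\mapsto\lambda$, $r_2:(t,y)\mapsto\mu y$, $L_1=0$, $L_2=\mu$. However, your verification as written contains bookkeeping errors that make individual steps false, even though the method and the conclusion are sound.

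First, your parameter identification is internally inconsistent. Your displayed SDE uses unit-intensity measures with thresholds $n\lambda$ and $n\mu\overline L^\#_n(s^-)$, which corresponds to $\alpha_1=\alpha_2=0$ in \eqref{eq:defbarXn}, not $\alpha_1=\alpha_2=1$; and with $\alpha_2=1$ and $\rho_2(t,y)=\mu y$, the identity required by Assumption \ref{hypo:LFGN} gives $n^{\alpha_2-1}\rho_2(t,nx)=n\mu x$, which depends on $n$, so the assumption fails as you state it. The paper resolves this by taking intensity-$n$ measures with thresholds $\lambda$ and $\mu\overline L^\#_n(s^-)$, i.e. $\rho_2:(t,y)\mapsto\mu y/n$. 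Second, the sign of $r_2$: the $r_k$ are rescaled indicator thresholds, hence necessarily nonnegative, and the direction of the departure jumps is carried by $\zeta_2=-1$, not by $r_2$. Your choice $r_2(t,x)=-\mu x$ makes the drift $r_1\zeta_1+r_2\zeta_2=\lambda+\mu x$, contradicting the fluid equation $\dot\Lambda=\lambda-\mu\Lambda$ that you correctly state; worse, it makes $t\mapsto r_2(t,\Lambda(t))=-\mu\Lambda(t)$ negative, so the time change $\gamma_2$ of \eqref{eq_stability_core:def_changement_temps} is decreasing and $B_2\circ\gamma_2$ in Theorem \ref{thm:main} is ill-defined (Lemma \ref{thm_mycore:timeChange} and Theorem \ref{cor:convergence_integral_mesures_poisson} require $r$ positive). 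With the correct $r_2(t,x)=\mu x$ one gets $L_2=+\mu$ and then $A=L_1\zeta_1+L_2\zeta_2=-\mu$, the intended operator; note that your stated computation ($L_2=-\mu$, $\zeta_2=-1$ giving $A=-\mu$) is arithmetically wrong, and only lands on the right answer because it cancels the earlier sign error. Finally, deducing \eqref{eq:LFGN2} from the moment bound of Theorem \ref{lemma:scaling} is circular, since that theorem presupposes Assumption \ref{hypo:LFGN}; for this model one instead bounds $\overline L^\#_n(t)$ directly by $\overline L^\#_n(0)$ plus the scaled Poisson arrival process, giving $\esp{\sup_{t\le T}\overline L^\#_n(t)}\le\esp{\overline L^\#_n(0)}+\lambda T$ uniformly in $n$.
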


\begin{proof}
Then, for all $n\ge 1$ it is easily checked that the resulting scaled process 
$\overline L^\#_n$ satisfies the SDE 
\begin{multline*}
\overline L^\#_n(t)=\overline L^\#_n(0)+\frac{1}{n}\int_{0}^{t}\int_{\mathbb{R}^+}\mathbf{1}_{\{z\leq \lambda}\}\dif \Pois^1_1(s,z)
\\-\frac{1}{n}\int_{0}^{t}\int_{\mathbb{R}^+}\mathbf{1}_{\{z\leq \mu \overline L_n^\#(s^-)\}}\dif \Pois^1_2(s,z),\quad t\ge 0,\end{multline*}
where  $\Pois^1_1$ and $\Pois^1_2$ denote two independent random Poisson measures of intensity 
$n\dif s \otimes \dif z$ on $(\mathbb{R_+})^2$. 
This precisely means that the process $\overline L^\#_n$ satisfies (\ref{eq:defbarXn}) for $d=1$, $m=2$, 
$\alpha_1=\alpha_2=1$, $\zeta_1=1$, $\zeta_2=-1$, and for the mappings 
$\rho_1: (t,y) \mapsto \lambda$ and $\rho_2: (t,y) \mapsto {\mu y\over n}$.
It is then clear that Assumption \ref{hypo:LFGN} is satisfied for 
$r_1:(t,y) \longmapsto \lambda$ and $r_2:(t,y)\longmapsto \mu y.$
Clearly, $\Lambda$ defined by (\ref{eq:defLambdainfinite}) is the unique solution of the differential equation 
(\ref{eq:defLambda}) in the present context. Then, plainly, 
condition (ii) in Assumption \ref{hypo:FCLT} is satisfied for 
$L_1=0$, $L_2 = \mu$ and $E_{n,1}\equiv E_{n,2}\equiv 0$ 
for all $n$, and we get that $\Theta_{\mathbf A}=\Theta$ in (\ref{eq:defTheta}). 
We conclude again using Theorem \ref{thm:main}.
\end{proof}

\subsubsection{The single server queue}
In this section we consider a M/M/$1$ queue: a single-server attends without vacations, customers that enter following a Poisson process of intensity $\lambda>0$, and the service times are IID with exponential distribution of parameter $\mu>0$.   
It is then immediate that the process $\left(L^{\dag}(t),\,t\ge 0\right)$ counting the
number of customers in the system at all times, is a birth and death process, 
that is ergodic if and only if $\lambda/\mu<1$.
This process can be represented as  follows: for all $t\ge 0$, 
\begin{multline*}
\begin{aligned}
 L^{\dag}(t)&=x+\int_0^t\int_{\mathbb{R}^+}\mathbf{1}_{\{z\leq\lambda\}}\dif\Pois_1(s,z)
 -\int_{0}^{t}\int_{\mathbb{R}^+}\mathbf{1}_{\{z\leq\mu\}}\mathbf{1}_{\{L^{\dag}(s^-)>0\}}\dif\Pois_2(s,z)\\
 &=x+\int_0^t\int_{\mathbb{R}^+}\mathbf{1}_{\{z\leq\lambda\}}\dif\Pois_1(s,z)-\int_{0}^{t}\int_{\mathbb{R}^+}\mathbf{1}_{\{z\leq\mu\}}\dif\Pois_2(s,z)
 \end{aligned}\\
 +\int_{0}^{t}\int_{\mathbb{R}^+}\mathbf{1}_{\{z\leq\mu\}}\mathbf{1}_{\{L^{\dag}(s^-)=0\}}\dif\Pois_2(s,z),
\end{multline*}
where $x$ is the number-in-system at time 0, and $\Pois_1$, $\Pois_2$ and $\Pois_3$ 
stand again for three independent random Poisson measures of intensity $\dif s \otimes \dif z$ on $(\mathbb{R_+})^2$. 
The standard Law-of-Large-Numbers scaling of this process is performed by 
multiplying the arrival and service intensities by a factor $n$, while increasing the number of customers in the initial state by the same multiplicative factor, and dividing the number of customers in the system at any time 
by $n$: equivalently, for all $t\ge 0$ we set 
\begin{multline}\label{eq:SkoMM1}
\overline{L}^\dag_n(t) = x+{1\over n}\int_0^t\int_{\mathbb{R}^+}\mathbf{1}_{\left\{z\leq\lambda\right\}}\dif\Pois^1_1(s,z)
 -{1\over n}\int_{0}^{t}\int_{\mathbb{R}^+}\mathbf{1}_{\left\{z\leq\mu\right\}}\dif\Pois^1_2(s,z)\\
 +{1\over n}\int_{0}^{t}\int_{\mathbb{R}^+}\mathbf{1}_{\left\{z\leq\mu\right\}}\mathbf{1}_{\left\{\overline L^{\dag}_n(s^-)=0\right\}}\dif\Pois^1_2(s,z),
\end{multline} 
where $\Pois^1_1$ and $\Pois^1_2$ denote two 
Poisson random measures of intensity $n \ds \otimes \dz$. 
The following result completes the classical diffusion limit presented e.g. in Proposition 5.16 in \cite{robert_stochastic_2003}, and Theorem 1 in \cite{besancon_steins_2018}, 

\begin{proposition}
Let 
\begin{equation}
\label{eq:defLambdasingle}
\Lambda(t)=\left(x+(\lambda-\mu)t\right)^+, \quad t\ge 0,
\end{equation}
 and $Z$ be the process defined by 
\begin{equation*}
Z(t) =\sqrt{\lambda + \mu}B(t),\quad t\ge 0,
 \end{equation*}
 for $B$ a standard Brownian motion. 
Then the following convergence holds at the usual rate, 
\[U_n=\sqrt{n}\left(\overline L^\dag_n-\Lambda\right) \Longrightarrow \Sko(Z) \quad \mbox{ in }\mathbb D([0,T],\R),\]
where the mapping $\Sko$ is defined by (\ref{eq:defSko}). 
\end{proposition}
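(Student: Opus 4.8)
The plan is to realize $\overline L^\dag_n$ as the Skorohod reflection of a \emph{free} netput process to which Corollary~\ref{cor:convergence_integral_mesures_poisson} applies verbatim, and then to transport the convergence through $\Sko$ by its Lipschitz continuity (Theorem~\ref{thm:Lip}). First I would introduce the process obtained from \eqref{eq:SkoMM1} by deleting its last term,
\begin{equation*}
  \widetilde L_n(t)=x+\frac1n\int_0^t\int_{\R^+}\car_{\{z\le\lambda\}}\dif\Pois^1_1(s,z)-\frac1n\int_0^t\int_{\R^+}\car_{\{z\le\mu\}}\dif\Pois^1_2(s,z),\quad t\le T.
\end{equation*}
The deleted term is non-decreasing, vanishes at $0$, and grows only on $\{\overline L^\dag_n(s^-)=0\}$, where an up-correction exactly cancels the service jump; hence the pair $(\overline L^\dag_n,\,R_n)$ satisfies the three defining properties recalled before \eqref{eq:defSko}, and by uniqueness $\overline L^\dag_n=\Sko(\widetilde L_n)$ almost surely. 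In the same spirit the fluid drift line $\ell(t)=x+(\lambda-\mu)t$ gives $\Sko(\ell)(t)=\ell(t)+\|\ell^-\|_{\infty,t}=\ell(t)\vee 0=\Lambda(t)$, so $\Lambda=\Sko(\ell)$.

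Next I would center the free process. With the compensated measures $\dif\Poist^1_k=\dif\Pois^1_k-n\,\dif s\otimes\dif z$ one writes $\widetilde L_n=\ell+n^{-1/2}V_n$, where
\begin{equation*}
  V_n(t)=\frac1{\sqrt n}\int_0^t\int_{\R^+}\car_{\{z\le\lambda\}}\dif\Poist^1_1(s,z)-\frac1{\sqrt n}\int_0^t\int_{\R^+}\car_{\{z\le\mu\}}\dif\Poist^1_2(s,z).
\end{equation*}
Each summand is of the exact form treated in Corollary~\ref{cor:convergence_integral_mesures_poisson}, with the constant rate $r\equiv\lambda$ (resp.\ $r\equiv\mu$) and hence time change $\gamma_1(t)=\lambda t$ (resp.\ $\gamma_2(t)=\mu t$); since $\Pois^1_1$ and $\Pois^1_2$ are independent, $V_n$ converges at the usual rates to $B_1\circ\gamma_1-B_2\circ\gamma_2$, a centered Gaussian process with variance $(\lambda+\mu)t$ at time $t$, that is, to $Z=\sqrt{\lambda+\mu}\,B$. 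Equivalently, this is Theorem~\ref{thm:main} applied to the netput process, for which $L_1=L_2=0$, $E_{n,k}\equiv0$ and $\Theta_{\mathbf A}=\Id$.

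It then remains to push $V_n\to Z$ through $\Sko$. Here I would exploit that $\Sko$ is positively homogeneous of degree one, $\Sko(cf)=c\,\Sko(f)$ for $c>0$ (immediate from $(cf)^-=c\,f^-$). In the heavy-traffic regime $\lambda=\mu$, $x=0$ that completes Proposition~5.16 of \cite{robert_stochastic_2003}, the drift cancels, so $\ell\equiv0$, $\Lambda\equiv0$ and $\widetilde L_n=n^{-1/2}V_n$; homogeneity then yields the exact identity $U_n=\sqrt n\,\Sko(n^{-1/2}V_n)=\Sko(V_n)$. Finally, Theorem~\ref{thm:Lip} gives $\Sko\in\Lip_c(\mathbb D_T\to\mathbb D_T,\|\cdot\|_{\infty,T})$, so for every $F\in\Lip_1(\cont_T\to\R)$ the composite $F\circ\Sko$ lies in $\Lip_c$, whence $\KR_{\cont_T}(U_n,\Sko(Z))=\KR_{\cont_T}(\Sko(V_n),\Sko(Z))\le c\,\KR_{\cont_T}(V_n,Z)$ and the rate $n^{-1/6}\log n$ carries over.

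The hard part will be precisely this last transfer beyond the Kantorovitch--Rubinstein distance. Unlike the linear maps $\Gamma$ and $S$ of Lemma~\ref{thm_mycore:timeChange} and Theorem~\ref{thm_stability_core:ODE}, which preserve $\H_T$, $\J_T$ and $\Upsilon_T$ and therefore transport all three of the usual rates, the reflection map is built from $\sup$ and $(\cdot)^-$, so it is neither linear nor twice Gross--Sobolev differentiable; consequently $F\circ\Sko$ need not lie in $\Upsilon_T$, nor in any $\F^\pi$ (as $\Sko(f)(t)$ depends on the whole path $\{f(s):s\le t\}$, not on finitely many coordinates), and the $\JJ_{\cont_T}$ and $\FrF^\pi_{\cont_T}$ bounds do not follow from Lipschitz continuity alone. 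One must therefore either read ``the usual rate'' in the statement as the $\KR$ rate $n^{-1/6}\log n$, or prove by a separate smoothing of $\Sko$ that the reflection acts boundedly on the finer test-function classes. A second delicate point to verify is the regime: homogeneity gives the clean identity $U_n=\Sko(V_n)$ only when $\ell\equiv0$, and for $\ell\not\equiv0$ the genuine limit is the Hadamard directional derivative $D\Sko(\ell)[Z]$, which reduces to $\Sko(Z)$ exactly in the critical case $\lambda=\mu$, $x=0$.
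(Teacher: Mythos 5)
Your proposal is correct and follows essentially the same route as the paper's proof: drop the reflection term to obtain the free process $\bX$ so that $\overline L^\dag_n=\Sko(\bX)$, apply Theorem~\ref{thm:main} (with $r_1\equiv\lambda$, $r_2\equiv\mu$, $L_1=L_2=0$, $\Theta_{\mathbf A}=\Id$) to get that $V_n=\sqrt n\,(\bX-\Gamma)$ converges at the usual rates to $Z$, observe that $\Lambda=\Sko(\Gamma)$, and conclude via the Lipschitz continuity of $\Sko$ from Theorem~\ref{thm:Lip}. It is worth adding that the two caveats you flag are genuine and are passed over silently in the paper's one-line conclusion. First, the identity $U_n=\sqrt n\,\bigl(\Sko(\bX)-\Sko(\Gamma)\bigr)=\Sko(V_n)$, on which the transfer rests, does use positive homogeneity of $\Sko$ and requires $\Gamma\equiv 0$, i.e.\ the critical regime $\lambda=\mu$, $x=0$ of Proposition~5.16 in \cite{robert_stochastic_2003}; away from criticality the limit of $\sqrt n\,\bigl(\Sko(\bX)-\Sko(\Gamma)\bigr)$ is the directional derivative of $\Sko$ at $\Gamma$ applied to $Z$, which coincides with $\Sko(Z)$ only in that regime. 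Second, composition with $\Sko$ preserves the Lipschitz property of a test function but neither finite rank nor membership in $\Upsilon_T$, so Lipschitz continuity alone transports the $\KR_{\cont_T}$ bound $n^{-1/6}\log n$ but not the $\FrF^{\pi}_{\cont_T}$ or $\JJ_{\cont_T}$ bounds; reading ``the usual rate'' (singular) in the statement as the Kantorovitch--Rubinstein rate, as you suggest, is the correct interpretation of what the paper's argument actually establishes.
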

\begin{proof}
The relation (\ref{eq:SkoMM1}) means that 
$\overline L^\dag_n = \Sko(\bX)$ for the process $\bX$ define for all $t\ge 0$ by 
\begin{equation*}
\bX(t) = x+{1\over n}\int_0^t\int_{\mathbb{R}^+}\mathbf{1}_{\{z\leq\lambda\}}\dif\Pois^1_1(s,z) 
           -{1\over n}\int_{0}^{t}\int_{\mathbb{R}^+}\mathbf{1}_{\{z\leq\mu\}}\dif\Pois^1_2(s,z). \end{equation*}
It is then immediate that both Theorems \ref{lemma:scaling} and \ref{thm:main} can be applied to 
the sequence of processes $\{\bX\}$ for $d=1$, $m=2$, $\alpha_1=\alpha_2=1$, $\zeta_1=1$, $\zeta_2=-1$ and for all $(t,x)$, $r_1(t,x)=\lambda$ and $r_2(t,x)=\mu$. 
We obtain (for $\Theta_A:=\textbf{Id}$), that the convergence 
\[V_n:=\sqrt{n}\left(\overline X_n-\Gamma\right) \Longrightarrow Z \quad \mbox{ in }\mathbb D([0,T],\R)\]
occurs at the usual rate, where for all $t\ge 0$, we define 
\begin{equation*}
\Gamma(t) =x+(\lambda-\mu)t,
\end{equation*}
and observe the equality in distribution 
\[Z(t) =\sqrt{\lambda} B_1(t) - \sqrt{\mu}B_2(t),\]
for $B_1$ and $B_2$, two independent standard Brownian motions. It is immediate that $\Lambda=\Sko(\Gamma)$, so the result follows from the Lipschitz continuity of the mapping $\Sko$, proven in Theorem \ref{thm:Lip}.  
\end{proof}

\subsection{SIR epidemics}
We consider a population of constant size $n$, in which individuals can go
through three states, susceptible, infectious ad then removed. The duration of
any infection follows an exponential distribution of parameter $\gamma$ and for
each couple infectious/susceptible, a contagion occurs from the former at an
exponential rate $\lambda$. All the involved r.v.'s are assumed independent. At
any time $t\ge 0$, we let $S^n(t), I^n(t)$ and $R^n(t)$ denote respectively the
number of Susceptible, Infected and Recovered individuals, and let
$X^n(t)=\begin{pmatrix}S^n(t)\\I^n(t)\end{pmatrix}$. The processes are scaled by
defining for all $t\ge 0$, $\overline{S}_n(t) =S^n(t)/n$ ,
$\overline{I}_n(t)=I^n(t)/n$ and $\overline{R}^n(t)=R^n(t)/n$, which represent
respectively the proportions of Susceptible, Infected and Recovered individuals
in the whole population at time $t$. We also let
$\overline{X}^n(t)=\begin{pmatrix}\overline{S}^n(t)\\\overline{I}^n(t)\end{pmatrix}$,
$t\ge 0$. 
A large-graph limit and a functional central limit theorem for the process $\overline{X}$ are given in Chapter 2 of \cite{britton}, together with similar results regarding the related SEIR, SIRS and SIS models. Observe that a hydrodynamic limit for a SIR process propagating on a heterogeneous population (meaning that a - non necessarily complete - graph connects susceptible to infectious individuals) is provided in \cite{decreusefond2012}, completing the result in \cite{volz2008sir}. The following result makes precise the speed of convergence in the functional CLT for the complete-graph case, given in \cite{britton}. We are confident that similar results hold for the other related models addressed in Chapter 2 of \cite{britton}, however we only consider here the SIR case for brevity, 
\begin{proposition}
  Let $\Lambda:t\longmapsto \begin{pmatrix}s(t)\\i(t)\end{pmatrix}$ be the
  unique solution of the system of ODE's
  \begin{equation}
    \label{eq:defLambdaSIR}
    \begin{cases}
      s'(t)  &=-\lambda s(t)i(t)\\
      i'(t) &=\lambda s(t)i(t)-\gamma i(t)
    \end{cases}, \quad t \ge 0.\end{equation} Let $\Theta(Y)$ denote the unique
  solution in $\mathbb D([0,T],\R)$ of the following SDE of unknown
  $g=\begin{pmatrix}g_1\\g_2\end{pmatrix}$,
  \begin{multline*}
    g(t)=\lambda\int_0^t \Bigl(i(u)g_1(u)+s(u)g_2(u)\Bigl)\dif u \begin{pmatrix} -1\\1\end{pmatrix}\\
    +\gamma\int_0^tg_2(u)\dif u \begin{pmatrix}
      0\\-1\end{pmatrix}+Y(t),\quad t\ge 0,
  \end{multline*}
  where for all $t\ge 0$,
$$Y(t)=\int_0^t\sqrt{\lambda s(u)i(u)} \dif B_1(u) \begin{pmatrix} -1\\1\end{pmatrix} + \int_0^t\sqrt{\gamma i(u)} \dif B_2(u) \begin{pmatrix} 0\\-1\end{pmatrix},$$
for $B_1$ and $B_2$, two independent standard Brownian motions. Then, if assertion (i) of
Assumption \ref{hypo:FCLT} is satisfied, the following convergence holds at the
usual rates,
\[U_n=\sqrt{n}\left(\overline X_n-\Lambda\right) \Longrightarrow \Theta(Y) \quad
  \mbox{ in }\mathbb D([0,T],\R).\]
\end{proposition}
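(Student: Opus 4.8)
The plan is to follow the proof of Theorem~\ref{thm:main}, the only genuinely new features being that the infection rate is bilinear rather than globally Lipschitz and that its linearization along $\Lambda$ has \emph{time-dependent} coefficients, so that the limiting operator is no longer the constant-coefficient map $\Theta_{\mathbf A}$. First I would write $\bX=(\overline S_n,\overline I_n)$ as a solution of the SDE~\eqref{eq:defbarXn} for $d=m=2$, $\alpha_1=\alpha_2=1$, $\zeta_1=\left(\begin{smallmatrix}-1\\1\end{smallmatrix}\right)$, $\zeta_2=\left(\begin{smallmatrix}0\\-1\end{smallmatrix}\right)$, the infection intensity $\rho_1(t,(S,I))=\lambda SI/n^2$ and the recovery intensity $\rho_2(t,(S,I))=\gamma I/n$, so that the effective event rates $n^{\alpha_k}\rho_k(s,n\bX)$ equal $\lambda SI/n$ and $\gamma I$, and $r_1(t,x)=\lambda x_Sx_I$, $r_2(t,x)=\gamma x_I$. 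A direct check shows that the drift $r_1\zeta_1+r_2\zeta_2$ reproduces the system~\eqref{eq:defLambdaSIR}, so $\Lambda$ is the fluid limit. Here $r_2$ is linear hence Lipschitz, but $r_1$ is only locally Lipschitz; since $\overline S_n,\overline I_n\ge0$ and $\overline S_n+\overline I_n\le1$ (constant population), both $\bX$ and $\Lambda$ stay in the compact simplex, so I would replace $r_1$ by a globally Lipschitz $\tilde r_1$ agreeing with it on a neighbourhood of that simplex. This alters neither the law of $\bX$ nor the value of $\Lambda$, Assumption~\ref{hypo:LFGN} now holds, and Theorem~\ref{lemma:scaling} gives $\esp{\|\bX-\Lambda\|_{\infty,T}}\lec{}n^{-1/2}$; the same Burkholder--Davis--Gundy argument upgraded to second moments yields $\esp{\|U_n\|_{\infty,T}^2}\lec{}1$.

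Next I would inject $\bX=\Lambda+n^{-1/2}U_n$ into the drift. The linear rate gives exactly $n^{1/2}(r_2(\bX)-r_2(\Lambda))=\gamma (U_n)_2$, while a second-order Taylor expansion of the bilinear rate produces
\begin{equation*}
 n^{1/2}\bigl(r_1(t,\bX(t))-r_1(t,\Lambda(t))\bigr)=\lambda\bigl(i(t)(U_n)_1(t)+s(t)(U_n)_2(t)\bigr)+E_{n,1}(t),
\end{equation*}
with quadratic remainder $E_{n,1}=\lambda n^{-1/2}(U_n)_1(U_n)_2$. The second-moment bound gives $\esp{\|E_{n,1}\|_{\infty,T}}\lec{}n^{-1/2}\to0$, which plays the role of condition~(ii) of Assumption~\ref{hypo:FCLT}, except that the linear coefficient $\lambda(i(t),s(t))$ is now time-dependent (and $E_{n,2}\equiv0$). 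Collecting terms as in~\eqref{eq:defU} I would obtain the closed linear equation
\begin{equation*}
 U_n(t)=U_n(0)+\int_0^t \mathbf A(u)U_n(u)\dif u+\sum_{k=1}^2\Bigl(\int_0^tE_{n,k}(u)\dif u\Bigr)\zeta_k+\overline M_{n,\bX}(t),
\end{equation*}
where $\mathbf A(u)$ is the Jacobian of the drift along $\Lambda$, i.e. exactly the linear part of the equation defining $\Theta(Y)$ in the statement.

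Finally I would argue that the solution operator $\Theta$ of this time-varying linear equation is Lipschitz continuous on $\cont_T$: Gronwall gives $\|\Theta(f)\|_{\infty,T}\le e^{T\sup_u\|\mathbf A(u)\|}\|f\|_{\infty,T}$, the supremum being finite since $s,i$ are bounded on $[0,T]$. The same reasoning as in Theorem~\ref{thm_stability_core:ODE} shows that $\Theta$ maps $\H_T$ and $\J_T$ into themselves, hence preserves $\Upsilon_T$, the only change being that the constant-coefficient resolvent $e^{tA}$ of~\eqref{eq_stability_core:2} is replaced by the propagator of $\mathbf A(\cdot)$, whose boundedness is all the argument uses. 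Then, exactly as in the proof of Theorem~\ref{thm:main}, I would replace $\overline M_{n,\bX}$ by the deterministic-integrand martingale $\overline M_{n,\Lambda}$ at the cost of an $n^{-1/4}$ error via~\eqref{eq:bofbof}, invoke Corollary~\ref{cor:convergence_integral_mesures_poisson} to obtain that $\overline M_{n,\Lambda}$ converges at the usual rates to $Y=\sum_k(B_k\circ\gamma_k)\zeta_k$ with $\gamma_1(t)=\int_0^t\lambda s(u)i(u)\dif u$ and $\gamma_2(t)=\int_0^t\gamma i(u)\dif u$ (matching the stated $Y$), and conclude by the Lipschitz continuity of $\Theta$ applied to the forcing term.

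The main obstacle is the interaction of the two non-standard features. Because $r_1$ is not globally Lipschitz one must localize to the simplex and, more importantly, control the quadratic remainder $E_{n,1}$ through a second-moment estimate on $U_n$ that is not supplied by Theorem~\ref{lemma:scaling} as stated; and because the linearization is time-dependent, the constant-coefficient operator $\Theta_{\mathbf A}$ of Theorem~\ref{thm_stability_core:ODE} must be upgraded to the propagator of $\mathbf A(\cdot)$, whose Lipschitz continuity and stability of $\Upsilon_T$ have to be re-established. Both extensions are routine once the boundedness of $\mathbf A(\cdot)$ on the simplex is in hand, but they are precisely why the statement invokes only condition~(i), and not the full condition~(ii), of Assumption~\ref{hypo:FCLT}.
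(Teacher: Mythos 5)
Your proposal is correct and follows essentially the same route as the paper's proof: represent the scaled SIR process as a Poisson-driven SDE with $\zeta_1=\binom{-1}{1}$, $\zeta_2=\binom{0}{-1}$, $r_1(t,x)=\lambda x_1x_2$, $r_2(t,x)=\gamma x_2$ (your choice $\alpha_k=1$ versus the paper's $\alpha_k=0$ is pure bookkeeping: the effective jump rates coincide), verify Assumption~\ref{hypo:LFGN} on the compact set where the proportions live, expand the bilinear rate to second order so that condition (ii) of Assumption~\ref{hypo:FCLT} holds with a time-dependent linear term and quadratic remainder $E_{n,1}=\lambda\sqrt{n}\,(\overline S_n-s)(\overline I_n-i)$, and conclude via Theorem~\ref{thm:main}.

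The two ``obstacles'' you flag are exactly the places where the paper's own write-up is loose, and your treatment is the more careful one. The paper bounds $\esp{\|E_{1,n}\|_{\infty,T}}\lec{} n^{-1/2}$ by citing the first-moment estimate \eqref{eq_mycore:1}, which by itself only gives $\sqrt{n}\,\esp{\|\bX-\Lambda\|_{\infty,T}^{2}}=O(1)$, not $o(1)$; a second-moment version of Theorem~\ref{lemma:scaling}, as you propose, is indeed needed (note that to close your Gronwall argument in $L^{2}$ you also need the initial condition controlled in second moment, $\esp{\|\bX(0)-\Lambda(0)\|^{2}}\lec{} n^{-1}$, a mild strengthening of condition (i) that holds e.g.\ for deterministic initial states). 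Likewise, Theorem~\ref{thm:main} is stated for constant vectors $L_k\in\R^{d}$, whereas here $L_1(t)=\lambda\binom{i(t)}{s(t)}$ is time-dependent; the paper applies Theorem~\ref{thm:main} without comment, while you correctly observe that the constant-coefficient map $\Theta_{\mathbf A}$ of Theorem~\ref{thm_stability_core:ODE} must be replaced by the propagator of $\mathbf A(\cdot)$, whose Lipschitz continuity on $\cont_T$ and preservation of $\Upsilon_T$ follow by the same Gronwall/linearity argument. In short, yours is not a different proof but a repaired version of the paper's.
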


\begin{proof}
  By the very definition of the SIR dynamics, for any $n$ the process $\overline
  X^n$ admits the following representation: for all $t\ge 0$,
  \begin{multline*}
    \bX(t)=\bX(0)+\frac{1}{n}\int_0^t\int_{\mathbb{R}^+}\mathbf{1}_{\{z\leq\lambda
      n\overline S^n(u^-)\overline
      I^n(u^-)\}}\dif\Pois_1(u,z) \begin{pmatrix}
      -1\\1\end{pmatrix}\\+\frac{1}{n}\int_0^t\int_{\mathbb{R}^+}\mathbf{1}_{\{z\leq\gamma
      n\overline I^n(u^-)\}}\dif\Pois_2(u,z)\begin{pmatrix}
      0\\-1\end{pmatrix},
  \end{multline*}
  for $\Pois_1$ and $\Pois_2$, two Poisson random measures of unit intensity, so
  we fall again into the settings of Section \ref{sec:cont-time-mark} for $d=2$,
  $m=2$, $\alpha_1=\alpha_2=0$, $\zeta_1=\binom{-1}{1}$ and
  $\zeta_2=\binom{0}{-1}$. It is then clear that Assumption \ref{hypo:LFGN} is
  satisfied for the mappings
  \begin{align*}
    r_1 \,:\,\R_+\times [0,1]^2 &\longrightarrow \R,\\
    \left(t,\binom{y_1}{y_2}\right) &\longmapsto \lambda y_1y_2;\\
    r_2 \,:\,\R_+\times [0,1]^2 &\longrightarrow \R,\\
    \left(t,\binom{y_1}{y_2}\right) &\longmapsto \gamma y_2.
  \end{align*}
  Indeed, to check that $r_1$ is Lipschitz continuous on its second variable,
  just observe that for all $y=\binom{y_1}{y_2}\in [0,1]^2$,
  $y'=\binom{y_1}{y_2}\in [0,1]^2$ for all $t$,
  \begin{equation*}
    \left|r_1(t,y') - r_1(t,y)\right| =\lambda \left|y'_2(y'_1-y_1)+y_1(y'_2-y_2)\right| 
    \le \lambda\left\|y' - y\right\|,\end{equation*}
  so Theorem \ref{lemma:scaling} is satisfied. 
  Now, define again for all $t$ and $n$, $U^n(t)=n^{1/2}\left(\bX(t)-\Lambda(t)\right)$. 
  To check that assumptions \ref{hypo:FCLT} holds, let us also observe that for all $y,y',t$ as above, 
  we also have that 
  \begin{equation*}
    r_1(t,y') - r_1(t,y) =\lambda \left(y_2(y'_1-y_1)+y_1(y'_2-y_2)+(y'_2-y_2)(y'_1-y_1)\right) 
    ,\end{equation*}
  entailing that for all $n$ and $t$, 
  \begin{equation*}
    n^{1/2}\left(r_1(t,\bX(t))-r_1(t,\Lambda(t)) \right)
    = \langle L_1(t),U_n(t)\rangle_{\R^2}+E_{1,n}(t),
  \end{equation*}
  for
  \[L_1(t) = \lambda \begin{pmatrix}s(t)\\i(t)\end{pmatrix}\quad \mbox{ and
    }\quad E_{1,n}(t)=\left(\overline S^n(t)-s(t)\right)\left(\overline
      I^n(t)-i(t)\right).\] Then, from (\ref{eq_mycore:1}) we obtain that
  \[\esp{\parallel E_{1,n} \parallel_{T,\infty}} \lec{} n^{-1/2}.\]
  Likewise, it is immediate that $r_2$ is Lipschitz continuous in its second
  variable, and we get for all $n$ and $t$ that
  \begin{equation*}
    n^{1/2}\left(r_2(t,\bX(t))-r_2(t,\Lambda(t)) \right)
    = \langle L_2(t),U_n(t)\rangle_{\R^2} \quad \mbox{for }L_2(t):=\binom{0}{\gamma},
  \end{equation*}
  so assumptions \ref{hypo:FCLT} hold, and we apply again Theorem
  \ref{thm:main}.
\end{proof}

\subsection{The Moran model}
\label{subsec:moran}

In this section we consider a biological model known as the Moran model : in a population of size $n$, each individual bears a gene liable to take two forms : $A$ and $B$. Each individual has one single parent and its child inherits the genetic form of its parent. To each couple of individuals is associated an exponential clock of unit rate, and each time the clock of a given couple rings, 
one element of the couple, drawn uniformly at random, dies, while the other one gives birth to another individual bearing the same gene. In addition, every gene of type $A$ mutes independently to type $B$ at rate $\nu_1$ and every gene of type $B$ mutes independently to type $A$ at rate $\nu_2$.
For all $t\ge 0$, we let $X_n(t)$ denote the number of individuals bearing gene $A$ in the population at time $t$. 
The process $X_n$ is scaled by dividing by $n$ the exponential rates, together with the number of individuals, so that 
$\overline{X}_n(t)=X_n(t)/n$ represents the proportion of individuals carrying the gene of type A at time $t$. A functional Stein method is applied to this model in \cite{kasprzak_diffusion_2017}. 
The following result is based on an alternative representation of the process $X_n$, 

\begin{proposition}
	Let $\Lambda$ be the solution in $\mathbb C\left([0,T]\right)$ 
	of the integral equation 
	\begin{equation*}
		\Lambda(t)=\Lambda(0)+\int_0^t(\nu_2-(\nu_1+\nu_2)\Lambda(s))\dif s,\quad t\ge 0. 
	\end{equation*}
	with $\Lambda(0)$ such that (i) of assumption \ref{hypo:FCLT} is satisfied. 
	Then the following convergence holds at the usual rate, 
	\[U_n=\sqrt{n}\left(\overline X_n-\Lambda\right) \Longrightarrow \Theta(Y) \quad \mbox{ in }\mathbb D([0,T],\R),\]
	where for all process $f$, $\Theta(f)$ is the only solution of the SDE 
	\[y(t)=y(0)+(\nu_1+\nu_2)\int_0^ty(s)\dif s + f(t),\quad t\ge 0,\] 
	and where 
	$$Y(t)=\int_0^t\sqrt{2\Lambda(s)(1-\Lambda(s))} \dif B(s),\quad t\ge 0,$$ 
	for $B$ a standard Brownian motion.
\end{proposition}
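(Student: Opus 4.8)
The plan is to recognize the Moran model as yet another instance of the general framework of Section~\ref{sec:cont-time-mark} and then invoke Theorem~\ref{thm:main}. The whole proof reduces to: (1) writing $X_n$ as the solution of an SDE of the form \eqref{eq:defbarXn} driven by Poisson measures, (2) identifying the data $(d,m,\alpha_k,\zeta_k,r_k)$, (3) verifying Assumptions~\ref{hypo:LFGN} and~\ref{hypo:FCLT}, and (4) reading off the limiting operator $\Theta$ and Brownian part $Y$.

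First I would set up the SDE. Here $d=1$. The mechanism has two types of transitions that change the count of $A$-genes: \emph{reproduction} events driven by the paired clocks, and \emph{mutation} events. The subtlety is the reproduction term. With $X_n(t)=k$ individuals of type $A$ out of $n$, a reproduction event increases the $A$-count by one precisely when an $A$-individual gives birth and a $B$-individual dies, which happens at rate proportional to $k(n-k)$; symmetrically it decreases by one at the same rate $k(n-k)$. Thus the two reproduction drifts cancel, and only the \emph{diffusive} fluctuation survives---this is the origin of the vanishing drift and of the variance $2\Lambda(s)(1-\Lambda(s))$ in $Y$. The mutation terms contribute $+1$ at rate $\nu_2(n-X_n)$ (a $B$ mutes to $A$) and $-1$ at rate $\nu_1 X_n$ (an $A$ mutes to $B$). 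So I would write
\begin{equation*}
  X_n(t)=X_n(0)+\sum_{k=1}^{4}\left(\int_0^t\int_{\R^+}\car_{\{z\le\rho_k(s,X_n)\}}\dif\Pois_k(s,z)\right)\zeta_k,
\end{equation*}
with $\zeta_1=\zeta_3=+1$, $\zeta_2=\zeta_4=-1$, the reproduction rates $\rho_1=\rho_2$ proportional to $X_n(n-X_n)/n$, and the mutation rates $\rho_3,\rho_4$ as above. After scaling by~$n$ (with the appropriate exponents $\alpha_k$ matching the rate magnitudes, $\alpha=1$ throughout since all rates scale like $n$), Assumption~\ref{hypo:LFGN} holds with $r_1(t,x)=r_2(t,x)=x(1-x)$ and $r_3(t,x)=\nu_2(1-x)$, $r_4(t,x)=-\nu_1 x$; these are Lipschitz on $[0,1]$ and uniformly bounded, and the candidate $\Lambda$ solving $\Lambda'=\nu_2-(\nu_1+\nu_2)\Lambda$ is exactly the fluid limit since the two reproduction drifts $\pm x(1-x)$ cancel.

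Next I would verify Assumption~\ref{hypo:FCLT}(ii) by linearizing each $r_k$ around $\Lambda$. The mutation rates are affine, so they contribute exactly $L_3=-\nu_2$, $L_4=-\nu_1$ with zero error. For the reproduction rates, $r_1(t,x)-r_1(t,\Lambda)=(1-2\Lambda(t))(x-\Lambda(t))-(x-\Lambda(t))^2$, so after multiplying by $n^{1/2}$ the first term gives $L_1=L_2=1-2\Lambda(t)$ and the quadratic remainder is the error term $E_{n,k}$, controlled by $\esp{\|E_{n,k}\|_{\infty,T}}\lec{}n^{-1/2}\to 0$ via Theorem~\ref{lemma:scaling}, exactly as in the SIR proof. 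Crucially, because $\zeta_1=+1,\zeta_2=-1$ while $r_1=r_2$ and $L_1=L_2$, these two reproduction contributions cancel in the matrix $\mathbf A=(L_k)\otimes(\zeta_k)$; the net drift operator $\Theta$ reduces to the one generated by the mutation part, namely the solution map of $y(t)=y(0)+(\nu_1+\nu_2)\int_0^t y(s)\dif s+f(t)$, which is linear, continuous and hence Lipschitz by Theorem~\ref{thm_stability_core:ODE}.

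Finally I would identify the Brownian part. Each $n^{-1/2}M_{n,k,\Lambda}$ converges at the usual rates to $(B_k\circ\gamma_k)\zeta_k$ by Corollary~\ref{cor:convergence_integral_mesures_poisson}, where $\gamma_k(t)=\int_0^t r_k(s,\Lambda(s))\dif s$. The two reproduction martingales, driven by \emph{independent} Poisson measures with the \emph{same} time-change $\gamma(t)=\int_0^t\Lambda(s)(1-\Lambda(s))\dif s$ but opposite signs $\zeta_1-\zeta_2=2$, combine into a single Brownian integral with quadratic variation $2\int_0^t\Lambda(s)(1-\Lambda(s))\dif s$; this is precisely the stated $Y(t)=\int_0^t\sqrt{2\Lambda(s)(1-\Lambda(s))}\dif B(s)$. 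The mutation martingales contribute vanishing fluctuation terms that are already absorbed into the drift linearization (or, if one keeps their martingale parts, they are lower-order after the cancellation). The conclusion then follows by applying Theorem~\ref{thm:main} and invoking the Lipschitz continuity of $\Theta$.

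\textbf{The main obstacle} I anticipate is getting the reproduction rate exactly right. Unlike the queueing and SIR examples where each transition rate is a simple monomial, here the paired-clock death-and-birth mechanism requires care to argue that the effective up- and down-rates are both $\propto X_n(n-X_n)$ and that their drifts cancel while their quadratic variations \emph{add}; this is the sole source of the diffusion coefficient and the only genuinely model-specific computation. Everything downstream is a mechanical application of the master Theorem~\ref{thm:main}, mirroring the SIR proof almost verbatim.
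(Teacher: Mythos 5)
Your route is the same as the paper's: represent $X_n$ through four Poisson measures (two mutually cancelling reproduction terms with thresholds $X_n(n-X_n)$, two mutation terms with thresholds $\nu_2(n-X_n)$ and $\nu_1X_n$), identify the data of Section~\ref{sec:cont-time-mark}, check Assumptions~\ref{hypo:LFGN} and~\ref{hypo:FCLT}, and invoke Theorem~\ref{thm:main}. But one step of your argument is genuinely false: the claim that the mutation martingales ``contribute vanishing fluctuation terms'' that are ``absorbed into the drift linearization.'' For the mutation drift to survive in the fluid limit (i.e.\ $\Lambda'=\nu_2-(\nu_1+\nu_2)\Lambda$), the mutation jump rates must be of order $n$; a compensated Poisson integral with rate of order $n$ has fluctuations of order $n^{1/2}$, hence contributes at order one to $U_n=\sqrt n\,(\bX-\Lambda)$ --- exactly the same order as the reproduction noise. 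Indeed Theorem~\ref{thm:main}, which you invoke, produces the noise $\sum_{k=1}^4(B_k\circ\gamma_k)\,\zeta_k$ with $\gamma_3(t)=\int_0^t\nu_2(1-\Lambda(s))\dif s$ and $\gamma_4(t)=\int_0^t\nu_1\Lambda(s)\dif s$, neither of which vanishes; there is no mechanism by which a martingale term gets ``absorbed'' into a drift. (The paper's own proof is silent on this point: it, too, simply cites Theorem~\ref{thm:main}, whose conclusion contains these extra Brownian terms while the stated $Y$ does not. So the discrepancy is a defect of the paper as well; but your proposed justification for dropping them is not a proof of the statement.)

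There are also bookkeeping errors that break your identification. You place the sign of the downward mutation inside the rate ($r_4=-\nu_1x$) while also keeping $\zeta_4=-1$, which double-counts it; and with your values $L_3=-\nu_2$, $L_4=-\nu_1$, the drift matrix of Theorem~\ref{thm:main} is $A=\sum_k\zeta_kL_k=(1)(-\nu_2)+(-1)(-\nu_1)=\nu_1-\nu_2$, which is not the $(\nu_1+\nu_2)$ operator you assert --- your conclusion does not follow from your own linearization. The consistent choice ($r_4=\nu_1x$, $\zeta_4=-1$, hence $L_3=-\nu_2$, $L_4=+\nu_1$) gives $A=-(\nu_1+\nu_2)$, the mean-reverting linearization of $x\mapsto\nu_2-(\nu_1+\nu_2)x$; note this is opposite in sign to the operator in the statement, which the paper reaches only through its own sign slips ($L_3=\nu_2$, $L_4=-\nu_1$). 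Finally, ``$\alpha_k=1$ throughout since all rates scale like $n$'' conflates the intensity exponent of the driving measures with the order of the jump rates: since there are $\sim n^2$ pair clocks, the reproduction thresholds are $\rho_{1}(t,nx)=\rho_2(t,nx)\sim n^2x(1-x)$ and Assumption~\ref{hypo:LFGN} forces $\alpha_1=\alpha_2=-1$, while the mutation thresholds are of order $n$ and force $\alpha_3=\alpha_4=0$; neither your uniform $+1$ nor the paper's uniform $-1$ is consistent with the framework.
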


\begin{proof}
	For all $n$, at each given time $s$ there are $X_n(s)(n-X_n(s))$ couples gathering an '$A$-individual' and a '$B$-individual'. 
	Thus for all $t\ge 0$, $X_n(t)$ can be represented as 
	\begin{multline*}
		X_n(t)=X_n(0)+\int_0^t\int_{\mathbb{R}}\mathbf{1}_{\{z\leq X_n(s)(n-X_n(s))\}}\dif\Pois_1(s,z)\\
		-\int_0^t\int_{\mathbb{R}}\mathbf{1}_{\{z\leq X_n(s)(n-X_n(s))\}}\dif\Pois_2(s,z)\\
		+\int_0^t\int_{\mathbb{R}}\mathbf{1}_{\{z\leq \nu_2(n-X_n(s))\}}\dif\Pois_3(s,z)\\
		-\int_0^t\int_{\mathbb{R}}\mathbf{1}_{\{z\leq \nu_1X_n(s)\}}\dif\Pois_4(s,z),
	\end{multline*}
	where $\Pois_i,i\in\llbracket 1,4 \rrbracket$, denote four independent Poisson random measures of unit intensity. 
	Consequently we get that 
	\begin{multline*}
		\overline{X}_n(t)=\overline{X}_n(0)+\frac{1}{n}\int_0^{t}\int_{\mathbb{R}}\mathbf{1}_{\{z\leq n\overline{X}_n(s)(n-n\overline{X}_n(s))\}}\dif\Pois^{-1}_1(s,z)\\-\frac{1}{n}\int_0^{t}\int_{\mathbb{R}}\mathbf{1}_{\{z\leq n\overline{X}_n(s)(n-n\overline{X}_n(s))\}}\dif\Pois^{-1}_2(s,z)\\+\frac{1}{n}\int_0^{t}\int_{\mathbb{R}}\mathbf{1}_{\{z\leq \nu_2(n-n\overline{X}_n(s))\}}\dif\Pois^{-1}_3(s,z)\\-\frac{1}{n}\int_0^{t}\int_{\mathbb{R}}\mathbf{1}_{\{z\leq \nu_1n\overline{X}_n(s))\}}\dif\Pois^{-1}_4(s,z),
	\end{multline*}
	where $\Pois^{-1}_i, i\in\llbracket 1,4 \rrbracket$, are four independent Poisson measures of intensity $n^{-1}\dif s\otimes \dif z$. 
	We fall back into the settings of Section \ref{sec:cont-time-mark} for $d=1$, $m=4$, $\zeta_1=\zeta_3=1$, $\zeta_2=\zeta_4=-1$ and 
	$\alpha_1=...=\alpha_4=-1$.  
	Assumption \ref{hypo:LFGN} holds for 
	$$r_1(t,x)=r_2(t,x)=x(1-x),\quad t\ge 0,\quad x\in[0,1],$$ 
	which are Lipschitz continuous in their second variable, as 
	for all $x,y\in [0,1]$ and all 
	$t\ge 0$ we get
	\begin{equation*}
		\left|x(1-x) - y(1-y)\right| \le 3 \left| x - y \right|. 
	\end{equation*}
	Assumption \ref{hypo:LFGN} also obviously holds for
	$$r_3(t,x)=\nu_2(1-x) \text{ and } r_4(t,x)=\nu_1x\quad t\ge 0,\quad x\in[0,1].$$ 
	On another hand, an immediate computation shows that for all $x,y\in [0,1]$, 
	\[y(1-y)-x(1-x) = (y-x)(1-2x)-(y-x)^2,\]
	so (\ref{eq:decomprho}) holds for 
	\[\begin{cases}
		L_1(t) =L_2(t)=(1-2\Lambda(t)),\quad t\ge 0,\\
		L_3(t)=\nu_2, L_4(t)=-\nu_1, \quad t\ge 0,\\
		E_{n,1}(t) =E_{n,2}(t)=\sqrt{n}(\bX(t)-\Lambda(t))^2,\quad t\ge 0,\\
		E_{n,3}(t) =E_{n,4}(t)=0,\quad t\ge 0,
	\end{cases}\]
	and from (\ref{eq_mycore:1}) we obtain that 
	\[\esp{\parallel E_{i,n} \parallel_{T,\infty}} \lec{} n^{-1/2},\quad i\in\{1,2\}.\]
	So Assumption \ref{hypo:FCLT} holds, and we apply again Theorem \ref{thm:main}. 
\end{proof}

\section{Limit theorem for Hawkes processes}
\label{sec:limit-theorem-hawkes}
In this section we turn to the cases of Hawkes processes. 
We let $\varphi\, :\, \R^{+} \to \R^{+}$ be an integrable function, such that
\begin{equation*}
\kappa:=  \int_{0}^{\infty}\varphi(t)\dif t<1 \text{ and } \int_{0}^{\infty} t^{1/2}\varphi(t)\dif t<\infty.
\end{equation*}
We denote by $\Phi$ the first primitive of $\varphi$:
\begin{equation*}
  \Phi(t)=\int_{0}^{t}\varphi(s)\dif s,\quad t\ge 0.
\end{equation*}
We also need to consider the iterated convolution products of $\varphi$ with
itself:
\begin{equation*}
  \varphi^{(1)}=\varphi \text{ and } \varphi^{(k)}=\varphi*\varphi^{(k-1)}.
\end{equation*}
The function
\begin{equation*}
  \psi=\sum_{k=1}^{\infty} \varphi^{(k)} 
\end{equation*}
plays an important role in the representation of the process to be defined hereafter. Note that
\begin{equation*}
  \int_{0}^{\infty} \psi(t)\dif t=\sum_{k\ge 1}\kappa^{k}=\frac{\kappa}{1-\kappa}\cdotp
\end{equation*}
For $\mu>0$, according to \cite{hawkes74,massoulie96}, there exists a point process $N$ (unique in distribution) such that $N$ admits the compensator
\begin{equation*}
  t\longmapsto \mu t+\int_{0}^{t} \varphi(t-s)\dif N(s).
\end{equation*}
In \cite{BACRY20132475}, it is proved that
\begin{equation}
  \label{eq_stability_core:11}
  \esp{\sup_{v\in [0,1]} \left|\frac1{n}N(nv) -\rho v\right|^{2}}\lec{} \frac{1}{n},
\end{equation}
where $\rho=(1-\kappa)^{-1}\mu$. Furthermore, it is shown that
\begin{equation*}
  \sqrt{n}\left( \frac1{n}N(nv) -\rho v \right)\xrightarrow[n\to \infty]{\text{dist. in }\mathbb D} B\left( \frac{\rho}{\sqrt{1-\kappa}}\, v \right).
\end{equation*}
Our goal is to assess the rate of this convergence. To that end, we use a particular
construction of $N$ based on a Poisson measure $M $ of intensity measure $\dif s\otimes
\dif z$. For all $t\ge 0$, we know from \cite{massoulie96} and references therein, 
that we can write 
\begin{equation*}
  N(t)=\int_{0}^{t}\int_{\R^{+}} \car_{\{z\le \mu+\int_{0}^{t}\varphi(t-s)\dif N(s)\}}\dif M(s,z).
\end{equation*}
Denote also 
\begin{equation*}
  W(t)=\int_{0}^{t}\int_{\R^{+}} \car_{\{z\le \mu+\int_{0}^{t}\varphi(t-s)\dif N(s)\}}\left( \dif M(s,z) - \dif s\dif z\right),
\end{equation*}
the corresponding compensated integral, so that $W$ is a local martingale with respect to the
filtration induced by $M$:
\begin{equation*}
  \F_{t}=\sigma\left\{M([0,s]\times A),\, 0\le s\le t, \, A\in\mathcal{B}(\R^{+})\right\},\,t\ge 0. 
\end{equation*}
For a process $Z$, we denote by
\begin{equation*}
  Z^{(n)}(t)=Z(nt),\  \overline{Z}^{(n)}(t)=\frac{1}{\sqrt{n}}Z^{(n)}(t),\ \tilde{Z}^{(n)}(t)=\frac{1}{n}Z^{(n)}(t),\quad t\ge 0. 
\end{equation*}
From \cite[Chapter 13]{JacodCalculstochastiqueproblemes1979}, we also know that
\begin{equation*}
  W^{(n)}(t)=\int_{0}^{nt}\int_{\R^{+}} \car_{\{z\le \mu+n\int_{0}^{t}\varphi(nt-ns)\dif \tilde{N}^{(n)}(s)\}}\left( \dif M(s,z) - \dif s\dif z\right),\, t\ge 0. 
\end{equation*}
These considerations result in the following lemma, 
\begin{lemma}
  \label{lem:ConvergenceW}
  For all $n \ge 1$, we have 
  \begin{equation*}
    \KR\left( \Xi_{n}(\overline{W}^{(n)}),\Xi_{n}(B\circ \gamma) \right)\lec{} n^{-1/6}\log (n),
  \end{equation*}
  where
  \begin{math}
    \gamma(t)=\rho t,\quad t\ge 0. 
  \end{math}
\end{lemma}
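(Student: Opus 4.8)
The plan is to trade the self-exciting, hence random, intensity of $W$ for a deterministic one by a random time change, and then to appeal to the master theorem, Corollary~\ref{cor:convergence_integral_mesures_poisson}. Write $\lambda(s)=\mu+\int_0^{s}\varphi(s-u)\dif N(u)$ for the stochastic intensity and $A(t)=\int_0^{t}\lambda(s)\dif s$ for the compensator of $N$, so that $W=N-A$. Because $\lambda\ge\mu>0$, the function $A$ is continuous, strictly increasing and tends to $+\infty$; by the random time-change theorem for point processes (\cite[Theorem 16]{bremaud_point_1981}) the process $\Pi:=N\circ A^{-1}$ is a standard Poisson process, so that, with $\widetilde\Pi:=\Pi-\operatorname{Id}$ its compensation,
\begin{equation*}
  W=\widetilde\Pi\circ A,\qquad\text{hence}\qquad \overline{W}^{(n)}(t)=\frac1{\sqrt n}\,\widetilde\Pi\bigl(A(nt)\bigr),\quad t\le T.
\end{equation*}
The natural comparison object is the process built from the \emph{same} $\widetilde\Pi$ but with the deterministic clock $\rho nt$, namely $\overline{V}^{(n)}(t):=n^{-1/2}\,\widetilde\Pi(\rho nt)$. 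In distribution $\overline{V}^{(n)}$ is a renormalized compensated Poisson integral of constant intensity $\rho$, so Corollary~\ref{cor:convergence_integral_mesures_poisson}, used with $r\equiv\rho$, shows that $\overline{V}^{(n)}$ converges at the usual rates to $B\circ\gamma$ with $\gamma(t)=\rho t$.

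By the triangle inequality,
\begin{equation*}
  \KR\bigl(\Xi_n\overline{W}^{(n)},\Xi_n(B\circ\gamma)\bigr)\le \KR\bigl(\Xi_n\overline{W}^{(n)},\Xi_n\overline{V}^{(n)}\bigr)+\KR\bigl(\Xi_n\overline{V}^{(n)},\Xi_n(B\circ\gamma)\bigr).
\end{equation*}
The second term is controlled by the convergence at the usual rates of $\overline{V}^{(n)}$: it yields $\KR(\Xi_n\overline{V}^{(n)},B\circ\gamma)\lec{}n^{-1/6}\log n$, while $\KR(B\circ\gamma,\Xi_n(B\circ\gamma))\le\esp{\|B\circ\gamma-\Xi_n(B\circ\gamma)\|_{\infty,T}}\lec{}n^{-1/2}$ by \eqref{eq_stability_core:5} applied to the time-scaled Brownian motion $B\circ\gamma$; the second term is thus $\lec{}n^{-1/6}\log n$.

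It remains to bound the first term. Since $\Xi_n$ is linear and $1$-Lipschitz for $\|\cdot\|_{\infty,T}$, and since both processes are coupled through $\widetilde\Pi$,
\begin{equation*}
  \KR\bigl(\Xi_n\overline{W}^{(n)},\Xi_n\overline{V}^{(n)}\bigr)\le\esp{\|\overline{W}^{(n)}-\overline{V}^{(n)}\|_{\infty,T}}=\frac1{\sqrt n}\,\esp{\sup_{t\le T}\bigl|\widetilde\Pi(A(nt))-\widetilde\Pi(\rho nt)\bigr|}.
\end{equation*}
Two facts feed this estimate. First, the clock stays close to its deterministic version: from $A(nt)-\rho nt=(N(nt)-\rho nt)-W(nt)$, the bound \eqref{eq_stability_core:11} and Doob's inequality applied to $W$ give $\esp{\sup_{t\le T}|A(nt)-\rho nt|^{2}}\lec{}n$. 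Second, the compensated Poisson process has a controlled modulus of continuity: by Bernstein's inequality for its centred increments and a union bound over a grid, its oscillation over sub-intervals of length $\delta$ of $[0,\rho nT+\delta]$ is of order $\sqrt{\delta\log n}$. Splitting on the event $\{\sup_t|A(nt)-\rho nt|\le\delta\}$ and its complement --- the latter handled by Cauchy--Schwarz together with $\esp{\sup_t|\widetilde\Pi(A(nt))-\widetilde\Pi(\rho nt)|^{2}}\lec{}n$ and Chebyshev's inequality, which contributes $\sqrt n/\delta$ --- the right-hand side is $\lec{}n^{-1/2}\sqrt{\delta\log n}+\sqrt n/\delta$. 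Choosing $\delta\sim n^{2/3}$ balances the two pieces and makes the first term $\lec{}n^{-1/6}\log n$.

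Combining the two bounds proves the claim. The crux of the argument, and its most delicate point, is this last step: the modulus-of-continuity estimate for $\widetilde\Pi$ must be applied at the random, $\widetilde\Pi$-measurable times $A(nt)$, so it has to be established uniformly on a deterministic grid and then transferred to the random arguments, the excursion $\{\sup_t|A(nt)-\rho nt|>\delta\}$ being discarded separately. The balance $\delta\sim n^{2/3}$, dictated by the sole second-moment control \eqref{eq_stability_core:11} available for the Hawkes process, is what pins the rate at $n^{-1/6}$.
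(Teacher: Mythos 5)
Your proof is correct, and it reaches the estimate by a genuinely different coupling than the paper's. The paper never leaves the planar representation: it keeps the same driving measure $M$ and replaces the random thinning threshold $r(s)=\mu+n\int_0^s\varphi(n(s-u))\dif \tilde N^{(n)}(u)$ by the deterministic one $r'(s)=\mu+\rho\Phi(ns)$, so that the coupling error is the compensated integral of $\car_{\{z\le r\}}-\car_{\{z\le r'\}}$ against the same measure; one application of the BDG inequality (the bracket is $\int|r-r'|$), an integration by parts, and the $L^{2}$ bound \eqref{eq_stability_core:11} then give an error of order at worst $n^{-1/4}$, with no chaining or union bound. You instead invoke Br\'emaud's time-change theorem --- the same Theorem 16 used to prove Corollary~\ref{cor:convergence_integral_mesures_poisson} --- to write $W=\widetilde\Pi\circ A$ and compare the random clock $A(nt)$ with $\rho nt$. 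The delicate point, which you correctly identify and handle, is that the modulus-of-continuity estimate for $\widetilde\Pi$ must hold at the $\widetilde\Pi$-dependent times $A(nt)$: your uniform Bernstein-plus-grid bound applied on the event $\{\sup_t|A(nt)-\rho nt|\le\delta\}$, with the complement discarded via Chebyshev and Cauchy--Schwarz, is the right fix, and your arithmetic ($n^{-1/2}\sqrt{\delta\log n}+\sqrt n/\delta$, optimized at $\delta\sim n^{2/3}$, using $\esp{\sup_t|A(nt)-\rho nt|^{2}}\lec{} n$ from \eqref{eq_stability_core:11} and Doob) is sound. Comparing the two routes: your coupling error $n^{-1/6}\sqrt{\log n}$ is weaker than the paper's $n^{-1/4}$, though both are swallowed by the dominant $n^{-1/6}\log n$ coming from Theorem~\ref{thm_stability_core:CD_lipschitz}; if that master rate were ever improved, the paper's coupling would remain adequate up to $n^{-1/4}$ while yours would cap the rate at $n^{-1/6}$. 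In exchange, your comparison process has exactly constant intensity $\rho$, so Corollary~\ref{cor:convergence_integral_mesures_poisson} applies verbatim with $r\equiv\rho$ and clock $\gamma(t)=\rho t$, whereas the paper's deterministic intensity $\mu+\rho\Phi(n\cdot)$ still depends on $n$ and silently uses $\int_0^\infty t^{1/2}\varphi(t)\dif t<\infty$ to absorb the $O(n^{-1/2})$ discrepancy between the induced clock and $\gamma$. Finally, you make explicit a step the paper leaves implicit: passing from $B\circ\gamma$ to $\Xi_n(B\circ\gamma)$ costs only $O(n^{-1/2})$ (up to logarithmic or $\varepsilon$ corrections) by the Brownian interpolation estimate.
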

\begin{proof}
  Fix $n\ge 1$. As $\tilde{N}^{(n)}$ is asymptotically close to $\gamma$ 
   we consider the process 
  \begin{equation*}
    R^{(n)}:\,t \longmapsto \int_{0}^{nt}\int_{\R^{+}} \car_{\{z\le \mu+n\int_{0}^{t}\varphi(nt-ns)\rho \dif s\}}\left( \dif M(s,z) - \dif s\dif z\right). 
  \end{equation*}
  It has the same distribution as
  \begin{equation*}
    \hat R^{(n)}:\,t \longmapsto \int_{0}^{t} \int_{\R^{+}} \car_{\{z\le \mu+\int_{0}^{t}\varphi(t-s)\rho \dif s\}}\left( \dif \Pois^1(s,z) - n\dif s\dif z\right),
  \end{equation*}
  where $\Pois^1$ is a Poisson measure of intensity $n\, \dif s\otimes \dif z$.
  According to Corollary~\ref{cor:convergence_integral_mesures_poisson},
  \begin{equation*}
    \KR\left( \Xi_{n}(\frac{1}{\sqrt{n}}\hat R^{(n)}),\,\Xi_{n}(B\circ \gamma) \right)\lec{} n^{-1/6}\log(n),
  \end{equation*}
  which is equivalent to
  \begin{equation}
    \label{eq_stability_core:13}
    \KR\left( \Xi_{n}(\overline{R}^{(n)}),\,\Xi_{n}(B\circ \gamma) \right)\lec{} n^{-1/6}\log(n).
  \end{equation}
We now want to estimate the error made by considering $\overline{R}_{n}$ instead
of $\overline{W}_{n}$. 
  Let for all $t\ge 0$, 
  \begin{align*}
    r(t)&=\mu+n\int_{0}^{t}\varphi(nt-ns)\dif \tilde{N}^{(n)}(s)\\
    r'(t)&= \mu+n\int_{0}^{t}\varphi(nt-ns)\rho \dif s.
  \end{align*}
  We have
  \begin{multline*}
    \|\Xi_{n}(\overline{R}^{(n)})-\Xi_{n}(\overline{W}^{(n)})\|_{\infty,1}\\
    \begin{aligned}
      &\lec{}\sup_{i\in \llbracket 0,n-1\rrbracket}\left|(\overline{R}^{(n)}-\overline{W}^{(n)})(\frac{i+1}{n})-(\overline{R}^{(n)}-\overline{W}^{(n)})(\frac{i+1}{n})\right|\\
      &=\sup_{i\in \llbracket 0,n-1\rrbracket}\left| \int_{i/n}^{(i+1)/n}\int_{\R^{+}} \left( \car_{\{r\le r(s)\}}-\car_{\{z\le r'(s)\}} \right)\dif \Poist(s,z) \right|.      
    \end{aligned}
  \end{multline*}
  Apply the BDG inequality to the vector valued martingale
  \begin{equation*}
    t\longmapsto \left( \int_{t\vee i/n}^{t\wedge (i+1)/n}\int_{\R^{+}} \left( \car_{\{r\le r(s)\}}-\car_{\{z\le r'(s)\}} \right)\dif \Poist(s,z),\ i\in \llbracket 0,n-1\rrbracket \right),
  \end{equation*}
  to obtain that 
  \begin{align*}
    \esp{
      \|\Xi_{n}(\overline{R}^{(n)})-\Xi_{n}(\overline{W}^{(n)})\|_{\infty,1}}&\lec{} \frac{1}{\sqrt{n}}
    \esp{\sum_{i=0}^{n-1} \int_{i/n}^{(i+1)/n} |r(s)-r'(s)|\dif s}^{1/2}\\
    &= \frac{1}{\sqrt{n}}\esp{ \int_{0}^{1} |r(s)-r'(s)|\dif s}.
  \end{align*}
  With the particular expression of $r$ and $r'$, we get that 
  \begin{multline*}
    \esp{
      \|\Xi_{n}(\overline{R}^{(n)})-\Xi_{n}(\overline{W}^{(n)})\|_{\infty,1}} \\
    \begin{aligned}
      &\lec{} \frac{1}{\sqrt{n}}\esp{\left| \int_{0}^{n} n\int_{0}^{t}\varphi(nt-ns)(\dif \tilde N^{(n)}(s)-\rho\dif s) \right|}^{1/2}\\
      &= \frac{1}{\sqrt{n}} \esp{\left| n\int_{0}^{1} \Phi(s)\left( \dif \tilde N^{(n)}(s)-\rho\dif s \right) \right|}^{1/2}\\
      &=\esp{\left| \int_{0}^{1} (\tilde N^{(n)}(s)-\rho s) \varphi(s)\dif s \right|}^{1/2},      
    \end{aligned}
  \end{multline*}
  by integration by parts.
  Thus, we have that 
  \begin{equation*}
    \esp{\|\Xi_{n}(\overline{R}^{(n)})-\Xi_{n}(\overline{W}^{(n)})\|_{\infty,1}}\lec{\varphi}\esp{\|\tilde N^{(n)}-\rho .\|_{\infty,1}}^{1/2}.
  \end{equation*}
  In view of \eqref{eq_stability_core:11}, we get
  \begin{equation*}
    \esp{\|\Xi_{n}(\overline{R}^{(n)})-\Xi_{n}(\overline{W}^{(n)})\|_{\infty,1}}\lec{} \frac{1}{\sqrt{n}}\cdotp
  \end{equation*}
  Thus, we can substitute $\overline{W}^{(n)}$ to $\overline{R}^{(n)}$ in
  \eqref{eq_stability_core:13}, and the result follows.
\end{proof}
The convergence of $\overline{N}^{(n)}$ follows from the representation formula
established in \cite{BACRY20132475}. Let
\begin{equation*}
  X^{(n)}(t)=N^{(n)}(t)-\esp{N^{(n)}(t)},\quad t\ge 0.
\end{equation*}
Then, we have for all $t\ge 0$, 
\begin{equation}\label{eq_stability_core:15}
 \overline{ X}^{(n)}(t)=\overline{W}^{(n)}(t)+\int_{0}^{t} n\psi(n s)\, \overline{W}^{(n)}(t-s)\dif s .
\end{equation}
The analysis of this identity is a bit tricky because the two terms of the
integrand do depend on~$n$, so we cannot invoke the Lipschitz continuity of
a well chosen map.
\begin{theorem}
  Assume that there exist $\epsilon\in (0,1/2)$ such that for all $n$, 
  \begin{equation}\label{eq_stability_core:18}
    \int_{n^{\epsilon}}^{\infty} \psi(t)\dif t \lec{} n^{-1/2}.
  \end{equation}
 Then, for all $n\ge 1$ we have that 
  \begin{equation*}
    \KR\left(\Xi_{n}(\overline{ X}^{(n)}),\, \Xi_{n}(B\circ \zeta)\right)\lec{} n^{-1/6}\log(n)
  \end{equation*}
  where
  \begin{equation*}
    \zeta(t)= \frac{\rho}{\sqrt{1-\kappa}}\, t,\quad t \ge 0. 
  \end{equation*}
\end{theorem}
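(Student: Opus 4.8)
The plan is to run everything through the representation \eqref{eq_stability_core:15} and reduce to Lemma~\ref{lem:ConvergenceW}, by showing that the convolution appearing there is, up to a negligible error, a fixed scalar multiple of $\overline{W}^{(n)}$. Put $c=(1-\kappa)^{-1}$ and recall $\int_{0}^{\infty}\psi(u)\dif u=\kappa/(1-\kappa)=c-1$. Since Brownian scaling gives $c\,(B\circ\gamma)\overset{d}{=}B\circ(c^{2}\gamma)$ and $c^{2}\gamma=\zeta$, and since $\Xi_{n}$ is linear and $\KR$ depends only on laws, I would first split by the triangle inequality
\begin{multline*}
  \KR\left(\Xi_{n}\overline{X}^{(n)},\,\Xi_{n}(B\circ\zeta)\right)\le \KR\left(\Xi_{n}\overline{X}^{(n)},\,\Xi_{n}(c\,\overline{W}^{(n)})\right)\\+\KR\left(\Xi_{n}(c\,\overline{W}^{(n)}),\,\Xi_{n}(B\circ\zeta)\right).
\end{multline*}
Using $\KR(c\,\mu,c\,\nu)=c\,\KR(\mu,\nu)$ for the sup-norm metric and $\Xi_{n}(B\circ\zeta)\overset{d}{=}c\,\Xi_{n}(B\circ\gamma)$, the second term equals $c\,\KR(\Xi_{n}\overline{W}^{(n)},\Xi_{n}(B\circ\gamma))$, which is $\lec{}n^{-1/6}\log(n)$ by Lemma~\ref{lem:ConvergenceW}.

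Everything then rests on the first term. As $\Xi_{n}\overline{X}^{(n)}$ and $\Xi_{n}(c\,\overline{W}^{(n)})$ are functionals of the same Poisson measure, I would bound this $\KR$ distance by the expected sup-norm of the coupled difference, and use that $\Xi_{n}$ is linear and $1$-Lipschitz for $\|\cdot\|_{\infty,1}$, reducing matters to estimating $\esp{\|\overline{X}^{(n)}-c\,\overline{W}^{(n)}\|_{\infty,1}}$. From \eqref{eq_stability_core:15}, together with $\int_{0}^{\infty}n\psi(ns)\dif s=c-1$ and $\int_{t}^{\infty}n\psi(ns)\dif s=\int_{nt}^{\infty}\psi(u)\dif u$, one gets the exact identity
\begin{multline*}
  \overline{X}^{(n)}(t)-c\,\overline{W}^{(n)}(t)=\int_{0}^{t}n\psi(ns)\bigl(\overline{W}^{(n)}(t-s)-\overline{W}^{(n)}(t)\bigr)\dif s\\-\Bigl(\int_{nt}^{\infty}\psi(u)\dif u\Bigr)\,\overline{W}^{(n)}(t)=:A_{n}(t)-B_{n}(t).
\end{multline*}
For the tail term $B_{n}$, I would split $[0,1]$ at $t=n^{\epsilon-1}$: for $t\ge n^{\epsilon-1}$ the mass satisfies $\int_{nt}^{\infty}\psi\le\int_{n^{\epsilon}}^{\infty}\psi\lec{}n^{-1/2}$ by \eqref{eq_stability_core:18}, while $\esp{\|\overline{W}^{(n)}\|_{\infty,1}}\lec{}1$; for $t<n^{\epsilon-1}$ the mass is at most $c-1$ and $\esp{\sup_{t\le n^{\epsilon-1}}|\overline{W}^{(n)}(t)|}\lec{}n^{(\epsilon-1)/2}$ by Burkholder--Davis--Gundy together with the control \eqref{eq_stability_core:11} on $\tfrac1n N(n\cdot)$. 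Since $\epsilon<1/2$ both contributions are $\lec{}n^{-1/6}\log(n)$. For $A_{n}$, splitting the $s$-integral at $s=n^{\epsilon-1}$ handles $s>n^{\epsilon-1}$ via the kernel mass $\int_{n^{\epsilon}}^{\infty}\psi\lec{}n^{-1/2}$ times the increment $2\|\overline{W}^{(n)}\|_{\infty,1}$, and leaves on $s\le n^{\epsilon-1}$ a term where the kernel mass $\le c-1$ multiplies the fluctuation of $\overline{W}^{(n)}$ at scale $\delta=n^{\epsilon-1}$.

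The crux, and the step I expect to be the main obstacle, is therefore a quantitative, $n$-uniform modulus-of-continuity estimate for the compensated Hawkes martingale,
\begin{equation*}
  \esp{\sup_{t\le1}\ \sup_{0\le s\le\delta}\bigl|\overline{W}^{(n)}(t)-\overline{W}^{(n)}(t-s)\bigr|}\lec{}\sqrt{\delta\log(1/\delta)},\qquad\delta=n^{\epsilon-1}.
\end{equation*}
I would obtain it by applying Burkholder--Davis--Gundy to the increments of $\overline{W}^{(n)}$, whose bracket is $\tfrac1n\bigl(N(nt)-N(n(t-s))\bigr)$ and is of order $\rho s$ by \eqref{eq_stability_core:11}, and then passing from a fixed $t$ to the supremum over $t$ through a chaining (or Garsia--Rodemich--Rumsey) argument, which produces the logarithmic factor; alternatively one may try to transfer the bound from $B\circ\gamma$ using the Lipschitz continuity of $\alpha_{\varepsilon}$ (Theorem~\ref{thm:Lip}) and Lemma~\ref{lem:ConvergenceW}, but the jumps of size $n^{-1/2}$ make this delicate. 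With $\delta=n^{\epsilon-1}$ this gives $A_{n}\lec{}n^{(\epsilon-1)/2}\sqrt{\log n}+n^{-1/2}$, which is $\lec{}n^{-1/6}\log(n)$ because $\epsilon<1/2$. The two points to watch are the uniformity in $n$ of the modulus estimate — the bound is only meaningful for $\delta\gg n^{-1}$, which is guaranteed by $\epsilon>0$ — and the moment control on the Hawkes increments feeding the Burkholder--Davis--Gundy inequality. Combining the estimates on $A_{n}$ and $B_{n}$ bounds the first $\KR$ term by $n^{-1/6}\log(n)$ and completes the proof.
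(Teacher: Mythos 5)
Your overall architecture coincides with the paper's: both proofs start from the representation \eqref{eq_stability_core:15}, reduce the theorem by the triangle inequality to Lemma~\ref{lem:ConvergenceW} plus a uniform bound on
\begin{equation*}
I_n=\esp{\sup_{t\le 1}\Bigl|\int_0^t n\psi(ns)\,\overline{W}^{(n)}(t-s)\dif s-\tfrac{\kappa}{1-\kappa}\,\overline{W}^{(n)}(t)\Bigr|},
\end{equation*}
and both control $I_n$ by the same decomposition (borrowed from Bacry et al.): a kernel-tail term killed by hypothesis \eqref{eq_stability_core:18}, plus the oscillation of $\overline{W}^{(n)}$ at scale $\eta=n^{\epsilon-1}$. (Your identification $c^{2}\gamma=\zeta$ with $c=(1-\kappa)^{-1}$ does not literally match the paper's stated $\zeta(t)=\rho t/\sqrt{1-\kappa}$, but it is what the argument --- and the CLT of Bacry et al. --- actually requires; the paper's proof relies on the same identification, so this is a typo in the statement rather than an error on your part.) The divergence is in the one step you yourself single out as the crux: the $n$-uniform modulus-of-continuity estimate for $\overline{W}^{(n)}$. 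The paper proves no such martingale estimate directly; it \emph{transfers} it from the Brownian motion, bounding $\sup_n\esp{\alpha_\eta(\overline{W}^{(n)})}\lec{}\esp{\alpha_\eta(B\circ\gamma)}\lec{}\eta^{1/2-\epsilon}$ via the Kantorovitch--Rubinstein bound of Lemma~\ref{lem:ConvergenceW} combined with the Lipschitz continuity of $\alpha_\eta$ and of the sup-norm functional (Theorem~\ref{thm:Lip}). This transfer is precisely why the Remark says the theorem is confined to the $\KR$ distance. You mention this route only as a ``delicate'' alternative and make the direct Burkholder--Davis--Gundy-plus-chaining argument your main line.

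That direct argument, as sketched, has a genuine gap. Chaining over the $\sim n^{1-\epsilon}$ blocks of length $\delta=n^{\epsilon-1}$ requires moment bounds of order $p\sim\log(1/\delta)$ on the block oscillations: with only second moments (Doob, or BDG at $p=2$) the union bound gives $\esp{\max_j Z_j}\le\bigl(\sum_j\esp{Z_j^2}\bigr)^{1/2}\lec{}\sqrt{\rho}$, with no decay in $\delta$ at all. So you need $p$-th moments of the bracket, i.e. high moments of Hawkes increments uniformly in $n$ --- true (e.g. via the cluster representation and the exponential moments of subcritical total progeny), but nontrivial and established neither in your sketch nor anywhere in the paper's toolkit. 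Moreover, the estimate \eqref{eq_stability_core:11} that you invoke to say the bracket ``is of order $\rho s$'' cannot serve here: it is an $L^2$ bound with error $n^{-1/2}$, which is \emph{larger} than $\delta=n^{\epsilon-1}$ precisely because $\epsilon<1/2$; for the mean of the bracket one should instead use $\esp{\mu+\int_0^t\varphi(t-s)\dif N(s)}\le\rho$, and for the maximum over blocks one needs the missing higher moments. So either supply those moment bounds, or adopt the paper's transfer argument (your ``alternative''), in which the jump-size objection you raise is absorbed by interpolation-error estimates of the kind already used around Lemma~\ref{lem:ConvergenceW}. The rest of your proof (the treatment of the tail term $B_n$, the split of $A_n$, and the final collection of bounds into $n^{-1/6}\log n$) is correct as it stands.
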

\begin{remark}
  We are here limited to the Kantorovitch-Rubinstein distance because we are
  going to use the convergence of first  moments induced by the  $\KR$-topology, see \eqref{eq_stability_core:14}, which is not valid for
  the other distances investigated above.
\end{remark}
\begin{remark}
  The classical choice of functions $\varphi$ are sums of exponential functions.
  In this situation, the integral of the left-hand-side of
  \eqref{eq_stability_core:18} goes to zero exponentially fast, so that the
  hypothesis is truly satisfied.
\end{remark}
\noindent 
The proof follows closely the lines of the proof of \cite{BACRY20132475}. 
\begin{proof}
Fix $n\ge 1$, and take for granted that 
  \begin{equation}\label{eq_stability_core:16}
    I_{n}:=
    \esp{\sup_{t\in [0,1]}\left| \int_{0}^{t} n\psi(ns) \overline{W}^{(n)}(t-s)\dif s - \frac{\kappa}{1-\kappa} \overline{W}^{(n)}(v)\right|}\lec{} n^{-1/2}.
    \end{equation}
    Then according to \eqref{eq_stability_core:15}, 
    \begin{equation*}
      \KR\left(\Xi_{n}(\overline{ X}^{(n)}),\, \Xi_{n}(B\circ \zeta)\right)\lec{}  \KR\left(\frac{1}{1-\kappa}\,\Xi_{n}(\overline{ W}^{(n)}),\, \Xi_{n}(B\circ \zeta)\right).
    \end{equation*}
    The result then follows from Lemma~\ref{lem:ConvergenceW}.

    We now establish \eqref{eq_stability_core:16}. According to the
    decomposition given in \cite{BACRY20132475}, for any $0<\delta<\eta$, we have
    \begin{align*}
      I_{n}\le \esp{\|\overline{W}^{(n)}\|_{\infty,1}} \int_{\delta n}^{\infty} \psi(t)\dif t +\esp{\alpha_{\eta}(\overline{W}^{(n)})}\int_{0}^{\infty}\psi(t)\dif t.
    \end{align*}
    Since the $\KR$-convergence implies the convergence of first order moments,
    \begin{equation*}
      \sup_{n} \esp{\|\overline{W}^{(n)}\|_{\infty,1}}\lec{} \esp{\|B\circ \gamma\|_{\infty,1}}.
    \end{equation*}
    In view of Theorem~\ref{thm:Lip}, we also have that 
    \begin{equation*}
      \sup_{n} \esp{\alpha_{\eta}(\overline{W}^{(n)})}\lec{\epsilon} \esp{\alpha_{\eta}(B\circ \gamma)}\lec{} \eta^{1/2-\epsilon}
    \end{equation*}
    for any $\epsilon>0$. If we choose $\eta=n^{-1+\epsilon}$ and $\delta=\eta/2$,
    we get
    \begin{equation*}
      I_{n}\lec{} \int_{n^{\epsilon}}^{\infty} \psi(t)\dif t + n^{-1/2-\epsilon/2}. 
    \end{equation*}
    The proof is thus complete.
\end{proof}

\section{Auxiliary result}
\label{sec:proofs}

%
%

%
%


%
%
%

%
%

\begin{proposition}
  \label{prop:momentPoisson}
  Let $(X_i, i=1,\cdots,n)$ be Poisson random variables of parameter $\nu$. The Lambert $W$ function is defined over $[-1/e,\infty]$ by the equation $W(x)e^{W(x)}=x$. Then
  $$\esp{\max_{i=1,\cdots,n}X_i}\leq \frac{\log{n/e^{\nu}}}{W(\log(n/e^{\nu})/\nu e)}=\nu e \exp\left(W\left(\log(n/e^{\nu})/\nu e\right)\right).$$
\end{proposition}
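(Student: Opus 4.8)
The plan is to use the standard exponential-moment (Chernoff-type) bound for a maximum, and then to optimize the free parameter, the optimization being precisely what summons the Lambert $W$ function. First I would observe that for any $\theta>0$, Jensen's inequality applied to the convex map $x\mapsto e^{\theta x}$, together with the elementary bound $e^{\theta\max_i X_i}=\max_i e^{\theta X_i}\le \sum_{i=1}^n e^{\theta X_i}$, yields
\begin{equation*}
  \theta\,\esp{\max_{i}X_i}\le \log\esp{e^{\theta\max_i X_i}}\le \log\Bigl(\sum_{i=1}^n \esp{e^{\theta X_i}}\Bigr).
\end{equation*}
No independence of the $X_i$ is needed here, only their common law. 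Inserting the Poisson moment generating function $\esp{e^{\theta X_i}}=\exp\bigl(\nu(e^\theta-1)\bigr)$ gives, for every $\theta>0$,
\begin{equation*}
  \esp{\max_i X_i}\le g(\theta):=\frac{\log n+\nu(e^\theta-1)}{\theta}=\frac{a+\nu e^\theta}{\theta},
\end{equation*}
where I set $a:=\log(n/e^{\nu})=\log n-\nu$.

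Next I would minimize $g$ over $\theta>0$. Since $g(\theta)\to+\infty$ as $\theta\to 0^+$ (for $n>1$) and as $\theta\to\infty$, the infimum is attained at an interior critical point. Differentiating, $g'(\theta)=0$ is equivalent to $\nu e^\theta(\theta-1)=a$, and since $\theta\mapsto \nu e^\theta(\theta-1)$ is strictly increasing on $(0,\infty)$ (its derivative is $\nu e^\theta\theta>0$), this equation has a unique root, which is therefore the minimizer. The key manipulation is the shift $\theta=1+w$: the stationarity equation becomes $\nu e\,w\,e^{w}=a$, i.e. $w e^{w}=a/(\nu e)$, which is exactly the defining relation of the Lambert $W$ function. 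Hence the optimal parameter is $\theta^\ast=1+W\bigl(a/(\nu e)\bigr)$.

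Finally I would evaluate $g(\theta^\ast)$. Writing $w=W(a/(\nu e))$, so that $a=\nu e\,w\,e^{w}$ and $e^{\theta^\ast}=e\,e^{w}$, a direct substitution gives
\begin{equation*}
  g(\theta^\ast)=\frac{a+\nu e^{\theta^\ast}}{\theta^\ast}=\frac{\nu e\,w\,e^{w}+\nu e\,e^{w}}{1+w}=\nu e\,e^{w}=\nu e\exp\bigl(W(a/(\nu e))\bigr),
\end{equation*}
and the identity $\nu e\,e^{w}=a/w$ delivers the equivalent first form $a/W(a/(\nu e))$ of the stated bound. The computation is routine; the only point requiring a word of care is that $\theta^\ast>0$, which holds because $W\ge -1$ on $[-1/e,\infty)$ and the argument $a/(\nu e)$ stays $\ge -1/e$ as soon as $n\ge 1$, so that the minimizer lies in the admissible range. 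The substance of the argument, and the only mildly non-obvious step, is recognizing that after the shift $\theta=1+w$ the first-order condition is verbatim the equation $we^{w}=a/(\nu e)$ defining $W$.
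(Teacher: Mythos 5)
Your proof is correct and follows essentially the same route as the paper's: Jensen's inequality plus bounding the maximum of the exponentials by their sum, then minimizing the resulting bound $\theta\mapsto(\log n+\nu(e^\theta-1))/\theta$, with the substitution $\theta=1+w$ turning the first-order condition into the defining equation of the Lambert $W$ function. The only (cosmetic) difference is that the paper first centers the variables $Z_i=X_i-\nu$ and adds $\nu$ back at the end, which shifts the objective by an additive constant and leads to the identical optimizer and identical bound.
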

\begin{proof}
  Consider $(Z_i, i=1,\cdots,n)$ some independent centered Poisson variables (i.e., for all $i$, $Z_i=X_i-\nu$). By a straightforward calculation, for all $u\in\R$ and all $i$,
  $$\esp{e^{uZ_i}}=e^{-u\nu}\sum_{k=0}^{\infty}e^{uk}e^{-\nu}\frac{\nu^k}{k!}=e^{-u\nu-\nu}e^{\nu e^u}$$
  Therefore the logarithm of the moment generating function of $Z_i$ is $\Psi_{Z_i}(u)=\nu\left(e^u-u-1\right).$

  By Jensen's inequality, we obtain
  $$\exp\left(u\esp{\max_{i=1,\cdots,n}Z_i}\right)\leq\esp{\exp\left(u\max_{i=1,\cdots,n}Z_i\right)}=\esp{\max_{i=1,\cdots,n}\exp(uZ_i)}$$
  Because the maximum of a sequence of positive numbers is lower than its sum, the right hand side of the last equation is lower than $\displaystyle \esp{\sum_{i=0}^n\exp(uZ_i)}$.\\
  Hence, by the definition of $\Psi_{Z_i}$,
  \begin{align*}
    \exp\left(u\esp{\max_{i=1,\cdots,n}Z_i}\right)&\leq\sum_{i=1}^n\esp{\exp(uZ_1)}\\
                                                  &\leq n\exp\left(\Psi_{Z_i}(u)\right)\\
    &=n\exp(\nu\left(e^u-u-1\right)).
  \end{align*}
  Taking the log, for any $u$ in $\R$,
  $$u\esp{\max_{i=1,\cdots,n}Z_i}-\nu\left(e^u-u-1\right)\leq \log n$$
  so that
  $$\esp{\max_{i=1,\cdots,n}Z_i}\leq \inf_{u\in\R}\left(\frac{\log n+\nu\left(e^u-u-1\right)}{u}\right).$$
By differentiation, it is easy to check that the infimum is reached when
  \begin{equation}
  \nu ue^u-\nu e^u+\nu=\log n.
  \label{infimum}
  \end{equation}
  Therefore, the infimum is equal to
  \begin{equation}
  \frac{\log n+\nu\left(e^{1+W(a)}-1-W(a)-1\right)}{1+W(a)}
  \label{value of infimum}
  \end{equation}
  But we know from \eqref{infimum} that $\nu (1+W(a))e^{1+W(a)}-\log n=\nu e^{1+W(a)}-\nu$ so that \eqref{value of infimum} is equal to
  $$\nu e^{1+W(a)}-\nu=\nu e e{W(a)}-\nu=\nu e \frac{a}{W(a)}-\nu$$
  Remembering that the $Z_i$ are the centered $X_i$ we thus obtain that
  $$\esp{\max_{i=1,\cdots,n}X_i}\leq \nu e \frac{a}{W(a)}-\nu+\nu=\frac{\log{(n/e^{\nu})}}{W(\log(n/e^{\nu})/\nu e)}$$
  which completes the proof.

\end{proof}
  We conclude by observing that $W(z)\geq \log (z) - \log\log(z)$ for all $z >e$. Therefore
  for $n\geq \exp \left(e^{\nu+1}+\nu\right)$,
   using the second expression for the bound of the expectation of the maximum in Proposition \ref{prop:momentPoisson} we get that 
   $$\esp{\max_{i=1,\cdots,n}X_i}\leq \frac{\log(n/e^{\nu})} {\log\left(\frac{\log(n/e^{\nu})}{\nu e}\right)}\cdot$$



\providecommand{\bysame}{\leavevmode\hbox to3em{\hrulefill}\thinspace}
\providecommand{\MR}{\relax\ifhmode\unskip\space\fi MR }
\providecommand{\MRhref}[2]{%
  \href{http://www.ams.org/mathscinet-getitem?mr=#1}{#2}
}
\providecommand{\href}[2]{#2}


\end{document}